\title[variation-norm Stein--Wainger]{S\MakeLowercase{harp variation-norm estimates for oscillatory integrals related to} C\MakeLowercase{arleson's theorem}}
\author[S. Guo]{Shaoming Guo}
\address{Shaoming Guo: Department of Mathematics, Indiana University Bloomington, 107 S Indiana Ave, Bloomington, IN 47405, USA}
\address{Current address: 
Department of Mathematics, The Chinese University of Hong Kong, Ma Liu Shui, Shatin, Hong Kong}
\email{shaomingguo2018@gmail.com}
\author[J. Roos]{Joris Roos}
\address{Joris Roos: Department of Mathematics, University of Wisconsin-Madison, 480 Lincoln Dr, Madison, WI-53706, USA}
\email{jroos@math.wisc.edu}
\author[P.-L. Yung]{Po-Lam Yung}
\address{Po-Lam Yung: Department of Mathematics, The Chinese University of Hong Kong, Ma Liu Shui, Shatin, Hong Kong}
\email{plyung@math.cuhk.edu.hk}
\date{\today}
\subjclass[2010]{42B20, 42B25}
\def\R{\mathbb{R}}
\def\N{\mathbb{N}}
\def\C{\mathbb{C}}
\def\Z{\mathbb{Z}}
\def\beq{\begin{equation}}
\def\endeq{\end{equation}}
\def\lesim{\lesssim}
\DeclareMathOperator{\Mod}{Mod}
\theoremstyle{plain}
\newtheorem{thm}{Theorem}[section]
\newtheorem{prop}[thm]{Proposition}
\newtheorem{lem}[thm]{Lemma}
\newtheorem{cor}[thm]{Corollary}
\numberwithin{equation}{section}
\begin{document}

\begin{abstract}
We prove variation-norm estimates for certain oscillatory integrals related to Carleson's theorem. Bounds for the corresponding maximal operators were first proven by Stein and Wainger. Our estimates are sharp in the range of exponents, up to endpoints. Such variation-norm estimates have applications to discrete analogues and ergodic theory. The  proof relies on square function estimates for Schr\"odinger-like equations due to Lee, Rogers and Seeger. In dimension one, our proof additionally relies on a local smoothing estimate. Though the known endpoint local smoothing estimate by Rogers and Seeger is more than sufficient for our purpose, we also give a proof of certain local smoothing estimates using Bourgain--Guth iteration and the Bourgain--Demeter $\ell^2$ decoupling theorem. This may be of independent interest, because it improves the previously known range of exponents for spatial dimensions $n\ge 4$.
\end{abstract}
\maketitle

\section{Introduction}

Let $n \geq 1$ and $\alpha > 1$ be fixed. Given a  Calder\'{o}n--Zygmund kernel $K: \R^n\to \R$ we define a modulated singular integral by
\begin{equation} \label{eq:Hu_def}
\mathcal{H}^{(u)} f(x) := \int_{\R^n} f(x-t) e^{i u |t|^{\alpha}} K(t)dt, \quad u \in \R.
\end{equation}
The maximal operator
\begin{equation}\label{max-operator}
\sup_{u \in \R} |\mathcal{H}^{(u)} f|
\end{equation}
was introduced in Stein and Wainger \cite{MR1879821}, as a generalization of the Carleson operator studied in Carleson \cite{MR0199631}, Fefferman \cite{MR0340926}, Lacey and Thiele \cite{MR1783613} and so on. In this paper, we study variation-norm estimates for the family $\{\mathcal{H}^{(u)} f\}_{u \in \R}$. Apart from the intrinsic interest in such bounds, another strong motivation is given by the connection to certain discrete analogues of \eqref{max-operator} that are the subject of recent works by Krause and Lacey \cite{MR3658135}, \cite{Kra18} (see Section \ref{sec:discranalogue} below).

If $\mathcal{J}$ is a subset of $\R$ and $\{a_u \colon u \in \mathcal{J}\}$ is a family of complex numbers indexed by $\mathcal{J}$, then for any $1 \leq r < \infty$ the $r$-variational norm of $\{a_u\}_{u \in \mathcal{J}}$ is defined to be
\[
V^r \{a_u \colon u \in \mathcal{J}\} := \sup_{J \in \N} \, \sup_{ \substack{u_0, u_1, \dots, u_J \in \mathcal{J} \\ u_0 < u_1 < \dots < u_J}} \left( \sum_{j=1}^J |a_{u_j} - a_{u_{j-1}}|^r \right)^{1/r}.
\]
Closely related to it is the jump function of the family $\{a_u\}_{u \in \mathcal{J}}$: For $\lambda > 0$, the $\lambda$-jump function of $\{a_u\}_{u \in \mathcal{J}}$, namely $N_{\lambda}\{a_u \colon u \in \mathcal{J}\}$, is defined to be the supremum of all positive integers $N$ for which there exists a strictly increasing sequence $s_1 < t_1 < s_2 < t_2 < \dots < s_N < t_N$, all of which are in $\mathcal{J}$, such that 
\[
|a_{t_j} - a_{s_j}| > \lambda
\]
for all $j=1,\dots,N$. For $r \in (1,\infty)$ and $p \in (1,\infty)$, we will study the $L^p$ mapping properties of the maps
\[
f \longmapsto V^r \{\mathcal{H}^{(u)} f\colon u \in \R\}
\]
and
\[
f\longmapsto \lambda [N_{\lambda} \{\mathcal{H}^{(u)} f\colon u \in \R\}]^{1/r}, \quad \lambda > 0.
\]
Henceforth $f$ will always be a Schwartz function on $\R^n$; the goal is to establish \emph{a priori} bounds for all such $f$.
If in dimension $n = 1$ we take $\alpha = 1$ and replace $|t|$ by $t$, then this corresponds to the variation-norm Carleson operator, which has been studied in \cite{MR2881301} and \cite{Gen16}. We refer the reader to \cite{MR1019960}, \cite{MR933985}, \cite{MR1788042}, \cite{MR1953540}, \cite{MR2434308} and the references therein for earlier results concerning jump function and variation-norm inequalities for other operators arising in harmonic analysis.\\

Let us assume that $K$ is a homogeneous Calder\'{o}n--Zygmund kernel, in the sense that
\[ \label{eq:CZ}
K(x) = \mathrm{p.v.}\frac{\Omega(x)}{|x|^n}
\]
for some function $\Omega$ that is smooth on $\R^n \setminus \{0\}$, homogeneous of degree 0. The assumption that $K$ is homogeneous is not strictly necessary. It is there to help simplify the presentation of the proof of the theorem. 
We also assume that $\int_{\mathbb{S}^{n-1}} \Omega(x) d\sigma(x) = 0$, where $\sigma$ denotes the surface measure on $\mathbb{S}^{n-1}$.

\begin{thm}\label{main}
Let $n \geq 1$, $\alpha \in (1,\infty)$ and define $\mathcal{H}^{(u)}$ as in (\ref{eq:Hu_def}). If $r \in (2,\infty)$, $p \in (1,\infty)$ and $r > \frac{p'}{n}$, then we have
\begin{equation}\label{170614e1.2}
\left\|V^r \{\mathcal{H}^{(u)} f: u\in \R\} \right\|_p \le C \|f\|_p.
\end{equation}
In addition, if $n \ge 2$ and $p \in \left( \frac{2n}{2n-1}, \infty \right)$, then
\[
\left\|\lambda \sqrt{N_{\lambda} \{\mathcal{H}^{(u)} f \colon u \in \mathbb{R}\}}\right\|_p \le C \|f\|_p.
\]
Here the constant $C$ is allowed to depend on $n, \alpha, p$ and $r$.
\end{thm}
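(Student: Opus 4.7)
I would first reduce the problem to a family of Schr\"odinger-like propagators via stationary phase. Write $K = \sum_{k\in\Z} K_k$ with $K_k(t) = K(t)\psi_k(t)$ supported in $|t|\sim 2^{-k}$, and set $T_k^{(u)}f := f * (e^{iu|\cdot|^\alpha} K_k)$. A stationary phase computation for the multiplier
\[
m_k(u,\xi) = \int_{\R^n} K_k(t)\, e^{i(u|t|^\alpha + t\cdot\xi)}\,dt
\]
shows that, up to rapidly decaying errors, $m_k(u,\xi)$ is concentrated on the annulus $|\xi| \sim |u|\,2^{-k(\alpha-1)}$, and on this set
\[
m_k(u,\xi) \approx a_k(u,\xi)\, e^{i c_\alpha u^{-1/(\alpha-1)} |\xi|^{\alpha/(\alpha-1)}}
\]
for a smooth symbol $a_k$ of order zero. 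Combined with a Littlewood--Paley decomposition $1 = \sum_j P_j$, this shows that $T_k^{(u)} P_j f$ is significant only for $2^j \sim |u|\,2^{-k(\alpha-1)}$, and on this diagonal it behaves like a Schr\"odinger-type propagator $e^{ic_\alpha u^{-1/(\alpha-1)}|D|^\beta} P_j f$ with $\beta = \alpha/(\alpha-1) > 1$.

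Next I would employ the standard long/short splitting of the variation norm,
\[
V^r\{\mathcal{H}^{(u)}f:u\in\R\} \lesssim V^r\{\mathcal{H}^{(\pm 2^l)}f:l\in\Z\} + \Bigl(\sum_{l\in\Z} V^r\{\mathcal{H}^{(u)}f:u\in I_l\}^r\Bigr)^{1/r},
\]
where $I_l = [2^l,2^{l+1}]$ (with an analogous treatment for $u<0$). For the long part, $V^r\le V^2$ together with a Rademacher--Menshov type inequality reduces matters to $L^p$ estimates of lacunary square functions of the $\mathcal{H}^{(2^l)} f$, which via the diagonal structure above feed into Lee--Rogers--Seeger square function estimates for $e^{it|D|^\beta}$. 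For the short part, within each block $u\in I_l$ only $O(1)$ values of $k$ contribute at each frequency scale $j$, so after the monotone change of variable $t = u^{-1/(\alpha-1)}$ the short variation reduces to controlling
\[
V^r\{e^{ic_\alpha t|D|^\beta} P_j f : t \in J_l\}
\]
in $L^p$, where $J_l$ is a dyadic interval.

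I would then invoke the square function estimate
\[
\Bigl\|\Bigl(\sum_{m\in\Z} |e^{i2^m|D|^\beta} P_j f|^2\Bigr)^{1/2}\Bigr\|_{L^p(\R^n)} \lesssim \|P_j f\|_{L^p(\R^n)}
\]
of Lee--Rogers--Seeger, valid in exactly the range of $p$ compatible with the hypotheses $r > p'/n$ and $p > 2n/(2n-1)$. Combined with $V^r\le V^2$ for $r>2$ and a Rademacher--Menshov reduction, this controls the variation of $t\mapsto e^{ic_\alpha t|D|^\beta}P_j f$ on $J_l$. Vector-valued Littlewood--Paley then sums the contributions over the frequency scales $j$ (and over the short blocks $l$). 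The jump-function inequality for $n\ge 2$ follows analogously by replacing the $V^r$-norm with $\lambda N_\lambda^{1/r}$ and invoking the jump-function version of the Lee--Rogers--Seeger bound.

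The main obstacle is attaining the sharp range of exponents: Lee--Rogers--Seeger must be applied at the boundary of its $L^p$ range, and the vector-valued summation over frequency scales must not introduce losses. The case $n=1$ is most delicate; there the pure square function estimate is insufficient at small $p$ and must be supplemented with a local smoothing estimate, for which the endpoint Rogers--Seeger theorem already suffices. The off-diagonal contributions from $(k,u,j)$ away from the balance $2^j \sim |u|\,2^{-k(\alpha-1)}$ are controlled by the rapid decay of $m_k$ outside the stationary-phase locus and contribute only to harmless error terms that are summed trivially.
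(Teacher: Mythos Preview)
Your high-level structure (long/short splitting, stationary phase reduction to a Schr\"odinger-like propagator, Lee--Rogers--Seeger square functions, local smoothing for $n=1$) matches the paper's, but several of the connecting steps are genuine gaps.

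\textbf{Long variations.} Your proposal to bound $V^r\{\mathcal{H}^{(2^l)}f:l\in\Z\}$ via $V^r\le V^2$ and Rademacher--Menshov does not work: Rademacher--Menshov gives a factor $\log N$ for $N$ terms, and here $N=\infty$. The paper instead handles the long part by a decomposition into (i) a truncated singular integral piece, for which the jump inequality of Campbell--Jones--Reinhold--Wierdl applies, plus (ii) pieces controlled by the L\'epingle-type jump inequality of Jones--Seeger--Wright for convolutions with dyadic dilates of a fixed measure. Lee--Rogers--Seeger is not used at all for the long part.

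\textbf{Short variations: passing from $V^r$ to an integral norm.} Inside a dyadic $u$-block the variation is over a continuum, so Rademacher--Menshov is not the right tool (and again would introduce unwanted losses). The paper observes that $u\mapsto \mathcal{H}^{(u)}_{\ell-j}P_{j+k}f(x)$ has $u$-Fourier support at scale $2^{(\ell-j)\alpha}$ and applies the Plancherel--P\'olya inequality to bound $V^r_j$ by $2^{\ell\alpha/q}$ times an $L^q_u$ norm. This produces exactly the factor that is later cancelled by the square-function / local smoothing gain; your route does not produce this matching.

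\textbf{Vector-valued summation over $j$.} ``Vector-valued Littlewood--Paley'' alone does not suffice to pass from the single-$j$ Lee--Rogers--Seeger estimate to the $\ell^2_j$ version inside $L^p$. The paper invokes a vector-valued version of Seeger's theorem on multipliers with localized bounds (Proposition~4.3), which costs only a logarithm of the ratio of bounds; this is where the $2^{\ell\varepsilon}$ loss comes from and why a genuine strict inequality (interpolation with local smoothing in $n=1$, or the extra $2^{-\ell\alpha n/p}$ gain in $n\ge2$) is needed.

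\textbf{Range of $p$.} Lee--Rogers--Seeger only applies for $p>2(n+2)/n$ when $n\ge2$; it does not cover the full range $p>2n/(2n-1)$. The paper treats $\tfrac{2n}{2n-1}<p\le 2$ separately via an $H^1\to L^1(L^2_u)$ estimate for the propagator (Proposition~5.1) and then interpolates. You would need some replacement for this.
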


Moreover, up to endpoints, we show that this is the best we can expect: 
\begin{thm}\label{170712thm1.2}
	The estimate \eqref{170614e1.2} fails if $r < \frac{p'}{n}$.
\end{thm}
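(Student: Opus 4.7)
My plan is a standard Knapp-type construction. Fix a real, nonnegative, even Schwartz function $\phi$ supported in the unit ball of $\R^n$ with $\widehat{\phi}(0) > 0$, and for parameters $0 < \delta \ll R$ set $f(y) := \phi(y/\delta)$, so that $\|f\|_p \sim \delta^{n/p}$. The goal is to produce a lower bound for $\|V^r\{\mathcal{H}^{(u)} f\}\|_p$ which, when compared to $\|f\|_p$, blows up as $R/\delta \to \infty$ whenever $r < p'/n$.

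First I establish a pointwise approximation for $\mathcal{H}^{(u)} f(x)$ when $|x| \sim R$ and $|u| \leq U_x := (\alpha|x|^{\alpha-1}\delta)^{-1}$. Writing
\[
\mathcal{H}^{(u)} f(x) = \int_{\R^n} \phi(y/\delta) e^{iu|x-y|^\alpha} K(x-y) dy
\]
and Taylor-expanding $|x-y|^\alpha = |x|^\alpha - \alpha|x|^{\alpha-1} \omega_x \cdot y + O(|x|^{\alpha-2}|y|^2)$ together with $K(x-y) = K(x) + O(|x|^{-n-1}|y|)$ for $|y| \leq \delta \ll |x|$ (where $\omega_x := x/|x|$), both the quadratic phase error (times $u$) and the kernel perturbation become $O(\delta/R)$ throughout the sampling range. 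This yields
\[
\mathcal{H}^{(u)} f(x) = \delta^n K(x) \widehat{\phi}(\alpha u|x|^{\alpha-1}\delta \omega_x) e^{iu|x|^\alpha} + E(u,x),
\]
with $|E(u,x)| \lesssim (\delta/R) \cdot \delta^n |K(x)|$.

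The variation lower bound now comes from the oscillating factor $e^{iu|x|^\alpha}$. Choose a small $c \in (0,1)$ depending only on $\phi$ so that $\widehat{\phi}(\alpha u|x|^{\alpha-1}\delta\omega_x) \geq \widehat{\phi}(0)/2$ whenever $|u| \leq c U_x$, and sample at $u_j := j\pi/|x|^\alpha$, $0 \leq j \leq J(x) := \lfloor cU_x|x|^\alpha/\pi \rfloor \sim R/\delta$. Since $K(x)$ and $\widehat{\phi}$ take real values here, successive samples of the main term have opposite signs and common size $\gtrsim \delta^n|K(x)|\widehat{\phi}(0)$; absorbing $E$ for $R/\delta$ large enough gives $|\mathcal{H}^{(u_j)}f(x) - \mathcal{H}^{(u_{j-1})}f(x)| \gtrsim \delta^n|K(x)|\widehat{\phi}(0)$, so that
\[
V^r\{\mathcal{H}^{(u)} f(x) : u \in \R\} \gtrsim \delta^n |K(x)| (R/\delta)^{1/r}.
\]
Raising to the $p$-th power and integrating over $\{R \leq |x| \leq 2R\}$, using $|K(x)| = |\Omega(\omega_x)|/|x|^n$ and $\|\Omega\|_{L^p(S^{n-1})} > 0$, I get
\[
\|V^r\{\mathcal{H}^{(u)}f\}\|_p^p \gtrsim \delta^{np - p/r} R^{n - np + p/r}.
\]
Comparing with $\|f\|_p^p \sim \delta^n$ leaves a ratio of order $(R/\delta)^{n-np+p/r}$. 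Since $n - np + p/r > 0$ is equivalent to $r < p'/n$, letting $R/\delta \to \infty$ contradicts \eqref{170614e1.2}.

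The main obstacle is checking the pointwise approximation uniformly over the sampling range: one needs to verify that both the quadratic phase error and the first-order kernel correction translate to a total error in $\mathcal{H}^{(u)} f(x)$ that is smaller than the main term by a factor of order $\delta/R$, so that the sign-alternation argument underlying the variation count survives. All remaining steps are direct computations.
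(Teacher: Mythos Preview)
Your argument is correct and reaches the same conclusion as the paper, but via a genuinely different route. The paper takes frequency-localized test functions $\widehat{f_k}(\xi)=\phi(2^{-k}\xi)$, writes $\mathcal{H}^{(u)}f_k$ as an oscillatory integral, and applies stationary phase twice (in $t$, then in $\xi$) to extract the oscillating factor $e^{i c x^2 u}$; it then samples in $u$ and carries out the verification only for $n=1$, $\alpha=2$, $K(t)=\mathrm{p.v.}\,1/t$. Your approach stays entirely in physical space: you take a spatially localized bump $f=\phi(\cdot/\delta)$, Taylor-expand the phase $|x-y|^{\alpha}$ and the kernel $K(x-y)$ about $y=0$, and read off the oscillation $e^{iu|x|^{\alpha}}$ directly. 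This is more elementary (no stationary phase), treats general $n$, $\alpha$, and homogeneous $K$ uniformly, and introduces two independent scales $R,\delta$ rather than the single dyadic parameter $2^k$; the paper's construction corresponds to the diagonal choice $\delta=2^{-k}$, $R=2^k$.

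One small imprecision: the error bound $|E(u,x)|\lesssim (\delta/R)\,\delta^n|K(x)|$ is not literally correct where $\Omega(\omega_x)$ is small or vanishes, since the kernel-perturbation contribution is of size $(\delta/R)\,\delta^n R^{-n}$ regardless of $K(x)$. The fix is immediate: restrict the integration in $x$ to the cone $\{R\le|x|\le 2R,\ |\Omega(\omega_x)|\ge\epsilon_0\}$ for some fixed $\epsilon_0>0$ chosen so that this set has positive spherical measure (possible because $\Omega$ is continuous and not identically zero). On that set $|K(x)|\sim R^{-n}$, your error absorption goes through, and the final lower bound is unchanged up to a harmless constant depending on $\epsilon_0$.
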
 

Thus, the range of exponents for which estimate  \eqref{170614e1.2} holds is given by the quadrilateral in Figure~\ref{fig:exp} below (up to endpoints).

\begin{figure}[ht]
	\label{fig:exp}
	\begin{tikzpicture}[scale=4]
	\draw (0,0) [->] -- (1.3,0) node [below] {$\frac1p$};
	\draw (0,0) [->] -- (0,0.7) node [left] {$\frac1r$};
	
	\node at (0,.5) [left] {$\frac12$};
	
	\node at (.75,0) [below] {$\frac{2n-1}{2n}$};
	\node at (1,0) [below] {$1$};
	\node at (0,0) [below] {$0$};
	
	\draw (0,.5) -- (.75, .5) -- (1,0);

	\draw [dashed] (.75,0) -- (.75,.5); 
	
	\end{tikzpicture}
	\caption{}
\end{figure}

It is a natural question what happens when $\alpha$ is less than 1. Our methods do not seem to be able to handle this case. But if $n = 1$, an easy adaptation of our methods allows us to obtain a positive result where the phase function $|t|^{\alpha}$ in (\ref{eq:Hu_def}) is replaced by $\text{sgn}(t) |t|^{\alpha}$. In particular, if $\alpha$ is an odd positive integer, we may replace $|t|^{\alpha}$   in (\ref{eq:Hu_def}) by $t^{\alpha}$ and still obtain a positive result.

The inequality \eqref{170614e1.2} can be understood as an extension of Stein and Wainger's well-known result from \cite{MR1879821} (also see \cite{MR3470421} for the case when $\alpha$ is not an integer):
\begin{equation}\label{170712e1.5}
\Big\|\sup_{u\in \R} |\mathcal{H}^{(u)} f| \Big\|_p \lesssim \|f\|_p, \text{ for every } p>1.
\end{equation}

\subsection{Connection with discrete analogues}\label{sec:discranalogue}
Further motivation stems from the study of a discrete analogue of the maximal operator \eqref{max-operator}. Fix an integer $d\ge 2$ and let $u\in\R$. Consider the following operator $\mathcal{H}^{(u)}_\Z$ acting on functions $f:\Z\to\C$,
\[ \mathcal{H}_\Z^{(u)} f(x) = \sum_{t\in\Z\setminus\{0\}} f(x-t) e^{iu t^d} \frac1{t},\quad x\in\Z. \]
This is a discrete analogue of our operator $\mathcal{H}^{(u)}$ for $n=1$ and $\alpha=d$. Bounding the associated maximal operator $f\mapsto \sup_{u\in\R} |\mathcal{H}_\Z^{(u)} f|$ on $\ell^p(\Z)$ is significantly more difficult than bounding Stein and Wainger's maximal operator and until recently, no such bounds were known. For the recent progress on this problem and further discussion of discrete analogues, we refer to Krause \cite{Kra18} and earlier work by Krause and Lacey \cite{MR3658135}.
A careful analysis of the multiplier of $\mathcal{H}_\Z^{(u)}$, which is much in the spirit of the Hardy-Littlewood circle method, reveals a natural splitting of the problem into a number-theoretic and an analytic component. 
In the case $p=2$, the core estimate for the analytic component is a variant of Bourgain's classical maximal multi-frequency lemma \cite[Lemma 4.1]{MR1019960}. The precise statements can be found in \cite[Section 3]{MR3658135} and \cite[Sections 5 and 10.2]{Kra18}; see, in particular, Theorem 3.5 of \cite{MR3658135}. 
Using a small refinement of our Theorem \ref{main} (see Theorem~\ref{thm:mainsharpCp} below), together with Bourgain's argument from \cite{MR1019960}, one can obtain an alternative simple proof of (a small extension of) Theorem~3.5 of \cite{MR3658135}; we include some details in an appendix below (see Section~\ref{sect:KL}). 

Discrete analogues are intimately related to ergodic theorems and this connection provides a further application of our variation-norm estimates. Krause made use of a variant of the estimate \eqref{170614e1.2} in his recent work on a pointwise ergodic theorem \cite[Theorem 1.2]{Kra18}.

\subsection{Outline of the proof}

We now briefly describe an outline of the proof of Theorem~\ref{main}. 
To control the left hand side of the estimate \eqref{170614e1.2}, we split the contribution into two parts: \emph{long variations} and \emph{short variations}. For each $j \in \Z$, define the short variation on the $u$-interval $[2^{j\alpha}, 2^{(j+1)\alpha}]$ by
\[
V^{r}_{ j}{\mathcal{H} f}(x)
:= V^{r} \{\mathcal{H}^{(u)} f (x) \colon u \in [2^{j\alpha}, 2^{(j+1)\alpha}]\}.
\]
Also define
\[
S_{r} (\mathcal{H}f)(x) :=\left(\sum_{j\in \Z} |V_j^{r} \mathcal{H}f (x)|^{r} \right)^{\frac{1}{r}}
\]
and
\[
N^{\mathrm{dyad}}_{\lambda}(\mathcal{H}f)(x) :=N_{\lambda}\{\mathcal{H}^{(2^{j \alpha})}f(x): j\in \Z\}.
\]
We will use the following lemma (see, for example, Jones, Seeger and Wright \cite{MR2434308}):
\begin{lem}
For $r\in[2,\infty)$ we have 
\[
\lambda [N_{\lambda} \{\mathcal{H}^{(u)} f \colon u > 0\}]^{1/r} \lesssim S_{r}(\mathcal{H}f)+\lambda [N_{\lambda/3}^{\mathrm{dyad}}(\mathcal{H}f)]^{1/r},
\]
uniformly in $\lambda > 0$.
\end{lem}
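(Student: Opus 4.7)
The plan is to prove the inequality pointwise in $x$, by splitting each pair in a given jump sequence according to whether the jump is ``explained'' by the dyadic skeleton $\{2^{j\alpha}\}$ or by a short variation inside a single dyadic interval. Fix $x$, and consider an admissible strictly increasing sequence $s_1 < t_1 < s_2 < t_2 < \cdots < s_N < t_N$ in $(0,\infty)$ with $|\mathcal{H}^{(t_j)} f(x) - \mathcal{H}^{(s_j)} f(x)| > \lambda$ for every $j$. For each $j$ let $k_j, \ell_j \in \Z$ satisfy $s_j \in [2^{k_j\alpha}, 2^{(k_j+1)\alpha})$ and $t_j \in [2^{\ell_j\alpha}, 2^{(\ell_j+1)\alpha})$, so necessarily $k_j \le \ell_j$.

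When $k_j < \ell_j$, insert the dyadic scales $2^{(k_j+1)\alpha}$ and $2^{\ell_j\alpha}$ and use the triangle inequality:
\[
\mathcal{H}^{(t_j)} f(x) - \mathcal{H}^{(s_j)} f(x) = \bigl(\mathcal{H}^{(2^{(k_j+1)\alpha})} f - \mathcal{H}^{(s_j)} f\bigr)(x) + \bigl(\mathcal{H}^{(2^{\ell_j \alpha})} f - \mathcal{H}^{(2^{(k_j+1)\alpha})} f\bigr)(x) + \bigl(\mathcal{H}^{(t_j)} f - \mathcal{H}^{(2^{\ell_j\alpha})} f\bigr)(x).
\]
At least one of the three summands exceeds $\lambda/3$ in modulus. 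Call $j$ a \emph{dyadic} index if the middle term exceeds $\lambda/3$; otherwise (including the case $k_j = \ell_j$, where the whole jump already lives in a single dyadic interval) call $j$ a \emph{short} index, and record the single sub-jump from the decomposition above that witnesses the size $> \lambda/3$.

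For dyadic indices, the corresponding dyadic pairs $(2^{(k_j+1)\alpha}, 2^{\ell_j\alpha})$ are non-overlapping: if $j < j'$ then $t_j < s_{j'}$ forces $\ell_j \le k_{j'}$, hence $2^{\ell_j\alpha} \le 2^{k_{j'}\alpha} < 2^{(k_{j'}+1)\alpha}$. Consequently, the number of dyadic indices is at most $N_{\lambda/3}^{\mathrm{dyad}}(\mathcal{H}f)(x)$, contributing $\lambda [N_{\lambda/3}^{\mathrm{dyad}}(\mathcal{H}f)(x)]^{1/r}$ to the right-hand side.

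For short indices, each produces one jump of modulus $> \lambda/3$ which is contained in a single dyadic $u$-interval $[2^{m\alpha}, 2^{(m+1)\alpha}]$. Using monotonicity of $(s_j)_j$ and $(t_j)_j$, one checks that within any fixed dyadic interval indexed by $m$ the selected short-index jumps are themselves non-overlapping in $u$ (at most one jump of the form $[2^{m\alpha}, t_j]$ coming from the leftmost piece, then the jumps with $k_j = \ell_j = m$ strictly nested inside, then at most one jump of the form $[s_j, 2^{(m+1)\alpha}]$). Therefore their $r$-th powers sum to at most $|V_m^r(\mathcal{H}f)(x)|^r$. Letting $M(x)$ be the total number of short indices, summation over $m$ gives
\[
(\lambda/3)^r M(x) \le \sum_{m \in \Z} |V_m^r(\mathcal{H}f)(x)|^r = S_r(\mathcal{H}f)(x)^r,
\]
so the short indices contribute at most $3\, S_r(\mathcal{H}f)(x)$ to $\lambda N^{1/r}$. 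Adding the two contributions via $(a+b)^{1/r} \le a^{1/r} + b^{1/r}$ (valid for $r \ge 1$) and taking the supremum over admissible sequences yields the claim. The main care needed is the combinatorial bookkeeping establishing non-overlap inside each dyadic interval; this is where the assumption $r \ge 2$ plays no role, and in fact only $r \ge 1$ is used.
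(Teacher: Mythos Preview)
Your argument is correct. The paper does not give its own proof of this lemma; it simply cites Jones, Seeger and Wright \cite{MR2434308}, and your pointwise splitting into ``dyadic'' and ``short'' indices is exactly the standard argument behind that reference. The combinatorial bookkeeping you carry out (at most one left-boundary and one right-boundary sub-jump per dyadic interval, with the interior whole-jumps sandwiched strictly between) is the right verification, and as you observe, only $r \ge 1$ is actually needed.
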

(Hereafter, $A \lesssim B$ means $A \leq C B$ for some absolute constant $C$.) 

By this lemma, and by Bourgain's argument \cite{MR1019960} of passing from jump norms to variation-norms (see also \cite[Section 2]{MR2434308}), to prove Theorem \ref{main} it suffices to prove the following two propositions. 
\begin{prop}\label{main1}
For every $p\in (1,\infty)$  and $r\in [2,\infty)$ we have 
\[
\|\lambda [N_{\lambda/3}^{\mathrm{dyad}}(\mathcal{H}f)]^{1/r}\|_p \lesssim \|f\|_p,
\]
uniformly in $\lambda > 0.$
\end{prop}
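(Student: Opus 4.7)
Since $N_\lambda^{1/r}$ is non-increasing in $r \ge 1$ at each fixed $\lambda$ and each point, it suffices to prove the estimate at $r = 2$. The strategy is to split $\mathcal{H}^{(u)}$ at the scale $|t|\sim u^{-1/\alpha}$ where the phase $u|t|^\alpha$ transitions from nearly constant to genuinely oscillatory. Writing $K = \sum_k K_k$ with $K_k$ supported on $|t|\sim 2^k$, decompose for $u = 2^{j\alpha}$
\[
\mathcal{H}^{(2^{j\alpha})} = A_j + B_j, \qquad A_j = \sum_{k\le -j}\mathcal{H}^{(2^{j\alpha})}_k, \quad B_j = \sum_{k > -j}\mathcal{H}^{(2^{j\alpha})}_k,
\]
and treat the non-oscillatory part $A_j$ and the oscillatory part $B_j$ separately.

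For the non-oscillatory part, write $A_j = T_K^{(j)} + E_j$, where $T_K^{(j)} f(x) = \int_{|t|\le 2^{-j}}f(x-t) K(t)\,dt$ is the dyadic truncation at scale $2^{-j}$ of the homogeneous Calder\'on-Zygmund operator $T_K$, and $E_j f(x) = \int_{|t|\le 2^{-j}}f(x-t)(e^{i2^{j\alpha}|t|^\alpha}-1) K(t)\,dt$. A change of variables $t = 2^{-j}s$ together with the homogeneity of $K$ of degree $-n$ reveals that $E_j f = f*\Phi_j$ is the convolution of $f$ with the $L^1$-dilate $\Phi_j(u) = 2^{jn}\Phi(2^j u)$ of the fixed function $\Phi(s) = \chi_{|s|<1}(e^{i|s|^\alpha}-1) K(s)$. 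One checks directly that $\Phi \in L^1$ (using $\alpha > 0$) and that the cancellation $\int_{\mathbb{S}^{n-1}}\Omega\,d\sigma = 0$ yields $\int\Phi = 0$, so $\{E_j f\}_j$ is a standard Littlewood-Paley-type family. The classical dyadic jump inequality for truncations of $T_K$ (Jones-Seeger-Wright \cite{MR2434308}) handles $\{T_K^{(j)} f\}$, and the Littlewood-Paley square function estimate $\|(\sum_j|E_j f|^2)^{1/2}\|_p \lesssim \|f\|_p$ handles the error.

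For the oscillatory part $B_j$, reparametrize $\ell = k+j \ge 1$ and set $T_{j,\ell} = \mathcal{H}^{(2^{j\alpha})}_{\ell-j}$. A standard stationary-phase analysis (in the spirit of Stein-Wainger \cite{MR1879821}) shows that the Fourier multiplier of $T_{j,\ell}$ is essentially concentrated in the annulus $|\xi|\sim 2^{j + \ell(\alpha-1)}$ with rapidly decaying tails, and $\|T_{j,\ell}\|_{L^2\to L^2}\lesssim 2^{-c\ell}$. Using the elementary pointwise inequality
\[
\lambda [N_\lambda\{F_j\}]^{1/2} \le 2\Big(\sum_j|F_j|^2\Big)^{1/2}
\]
(each $\lambda$-jump forces an endpoint value to have modulus $> \lambda/2$; combined with Chebyshev over the disjoint index set $\{s_i,t_i\}$ this gives the stated bound with constant $2$), it suffices to show
\[
\Big\|\Big(\sum_j|B_j f|^2\Big)^{1/2}\Big\|_p \lesssim \|f\|_p.
\]
By Minkowski in $\ell$ and the frequency separation of $\{T_{j,\ell}\}_j$ for fixed $\ell$, this further reduces to $\|(\sum_j|T_{j,\ell}f|^2)^{1/2}\|_p \lesssim c_\ell \|f\|_p$ with $\sum_\ell c_\ell < \infty$: at $p = 2$ this follows from Plancherel combined with the Stein-Wainger decay, and for general $p \in (1,\infty)$ by interpolation against the uniform $L^p$ boundedness of each $T_{j,\ell}$ coming from Calder\'on-Zygmund theory.

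The main obstacle will be the $p \ne 2$ step for the oscillatory part: converting the $L^2$ stationary-phase gain into summable-in-$\ell$ constants in $L^p$ requires combining vector-valued Littlewood-Paley theory with careful control of the frequency tails of $T_{j,\ell}$. A secondary technical point is verifying sufficient regularity of $\Phi$ (or equivalently, appropriate decay of $\widehat{\Phi}$ at zero and at infinity) to make the square-function estimate for $\{E_j f\}_j$ rigorous in all $L^p$, $1<p<\infty$.
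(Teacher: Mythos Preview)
Your overall strategy matches the paper's: split $\mathcal{H}^{(2^{j\alpha})}$ at scale $|t|\sim 2^{-j}$, handle the truncated singular integral via Campbell--Jones--Reinhold--Wierdl, and dominate the remaining pieces by square functions via the pointwise inequality $\lambda[N_\lambda\{F_j\}]^{1/2}\le 2(\sum_j|F_j|^2)^{1/2}$. Your observation that $\int\Phi=0$ (equivalently $c_\ell=0$) in the homogeneous case is correct and slightly streamlines the paper's argument, which retains the constants $c_\ell$ and the auxiliary operators $\tilde S_k=\sum_\ell c_\ell S_{k-\ell}$ only to accommodate non-homogeneous kernels.

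There is, however, a genuine gap in your treatment of the oscillatory part for $p\neq 2$. You assert that $\|(\sum_j|T_{j,\ell}f|^2)^{1/2}\|_p\lesssim c_\ell\|f\|_p$ follows ``by interpolation against the uniform $L^p$ boundedness of each $T_{j,\ell}$ coming from Calder\'on--Zygmund theory,'' but uniform scalar $L^p$ bounds on the individual $T_{j,\ell}$ do not yield the vector-valued square-function bound. Interpolating the $L^2(\ell^2)$ estimate against the $L^p(\ell^\infty)$ bound obtained from $|T_{j,\ell}f|\lesssim Mf$ only produces $L^p(\ell^q)$ with $q>2$, not $q=2$; and a direct vector-valued Calder\'on--Zygmund argument fails because the gradient of the kernel of $T_{j,\ell}$ picks up the large factor $2^{j+\ell(\alpha-1)}$ from the oscillating phase. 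The multipliers are merely \emph{concentrated} near $|\xi|\sim 2^{j+\ell(\alpha-1)}$, not supported there, so one cannot invoke Littlewood--Paley directly either. The paper's resolution is to insert an additional frequency decomposition $f=\sum_m \Delta_m f$ and prove, for each offset $m$ and each $\ell$, the estimate
\[
\Big\|\Big(\sum_j\big|T_{j,\ell}\,\Delta_{j-\ell+m}f\big|^2\Big)^{1/2}\Big\|_p\lesssim 2^{-\gamma(|m|+\ell)}\|f\|_p.
\]
The crude $L^p$ bound (no decay) comes from $|T_{j,\ell}g|\lesssim Mg$ together with Fefferman--Stein and Littlewood--Paley applied to the \emph{localized} pieces $\Delta_{j-\ell+m}f$; the decaying bound at $p=2$ comes from the van der Corput estimates for $m_\ell$ and $\tilde m_\ell$; real interpolation then gives summable decay in both $\ell$ and $m$. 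Your ``main obstacle'' is exactly this step, but the resolution you sketch is not the right one. The same mechanism also takes care of your error term $E_j$ (which is the paper's $\mathcal{H}_{k,0}$), making a separate Littlewood--Paley argument for $\Phi$ unnecessary.
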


\begin{prop}\label{main2}
Let $n \ge 1$ and $p\in(1, \infty)$, $r\in (2,\infty)$ with $r>p'/n$. Then we have
\[
\|S_{r}(\mathcal{H} f)\|_p \lesssim \|f\|_p.
\]
If $n\ge 2$, then the inequality also holds for $r=2$.
\end{prop}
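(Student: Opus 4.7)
My plan is to reduce the short-variation square function $S_r(\mathcal{H}f)$ to a Lee--Rogers--Seeger variation-norm square function estimate for the Schr\"odinger-like propagator $e^{is(-\Delta)^{\beta/2}}$, $\beta=\alpha/(\alpha-1)$, and invoke the known $L^p$ bounds for it; in dimension one the last step is supplemented with a local smoothing estimate in order to reach the full range $p>r'$.

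The first step is a scaling reduction. By the homogeneity of $K$ and of $|t|^\alpha$, the substitution $t=2^{-j}s$, $u=2^{j\alpha}v$ gives
\[
\mathcal{H}^{(u)}f(x)=\mathcal{H}^{(v)}[f(2^{-j}\,\cdot\,)](2^j x),
\]
so that, up to an $L^p$-isometric dilation, each short variation $V_j^r\mathcal{H}f$ becomes a fixed-scale variation over $v\in[1,2^\alpha]$ applied to a rescaled function. Next, I decompose $K=\sum_k K_k$ dyadically in $|t|$ on this fixed-scale problem. The pieces with $k\le 0$ carry a phase $v|t|^\alpha=O(1)$, and their variation in $v$ is bounded pointwise by the Hardy--Littlewood maximal function applied to a Calder\'on--Zygmund piece, which is an acceptable error. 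For $k\ge 0$, stationary phase in the radial direction of the phase $v|t|^\alpha-t\cdot\xi$, whose critical point sits at $|t|\sim(|\xi|/(\alpha v))^{1/(\alpha-1)}$, identifies $\mathcal{H}_k^{(v)}$ as a Fourier multiplier essentially supported at $|\xi|\sim 2^{k(\alpha-1)}$ with phase $-c_\alpha v^{-1/(\alpha-1)}|\xi|^\beta$ and a symbol of order $-n\alpha/2$. Reparametrising $s=v^{-1/(\alpha-1)}$ (which varies in a compact interval bounded away from $0$), this factors as $e^{ic_\alpha s(-\Delta)^{\beta/2}}$ composed with an essentially Littlewood--Paley projection.

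Undoing the scaling matches each short-variation scale $j$ with a single Littlewood--Paley scale $P_mf$ of $f$; the off-diagonal pairs $(j,m)$ generate error terms that decay rapidly by non-stationary-phase integration by parts and are summed via a Schur-type almost-orthogonality argument. Combining this with the embedding $\ell^2\subset\ell^r$ for $r\ge 2$,
\[
\|S_r(\mathcal{H}f)\|_p\le\Big\|\big(\sum_j|V_j^r\mathcal{H}f|^2\big)^{1/2}\Big\|_p\lesssim\Big\|\big(\sum_m|V^r\{e^{is(-\Delta)^{\beta/2}}P_m f:s\in I\}|^2\big)^{1/2}\Big\|_p
\]
modulo the errors above. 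For $n\ge 2$ the $L^p$-boundedness of the right-hand square function in the range $p>\frac{nr}{nr-1}$ (equivalently $r>p'/n$), including the endpoint $r=2$, is precisely the Lee--Rogers--Seeger variation-norm square function estimate. For $n=1$ this tool alone does not reach the claimed range $p>r'$, so I would then dominate the variation norm $V^r$ in $s$ by an $L^r_s$-Sobolev norm of order slightly greater than $1/r$ via Sobolev embedding in the time variable, and invoke a local smoothing estimate for $e^{is(-\Delta)^{\beta/2}}$ (either the endpoint estimate of Rogers--Seeger, or the one proved later in this paper via Bourgain--Guth iteration and $\ell^2$ decoupling) to absorb the Sobolev loss. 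The resulting $L^p$ bound then holds in the range $r>p'$.

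The main obstacle is justifying the diagonal reduction from pairs $(j,m)$ to a single matched scale: the stationary-phase error terms must be summed cleanly without logarithmic loss in $j$, and the low-scale pieces ($k\le 0$) must be controlled in the \emph{variation} norm rather than just pointwise. A secondary difficulty, specific to dimension one and most pressing near the endpoint, is the precise book-keeping required to synchronise the Sobolev exponent in $s$ with the available local smoothing gain so that the critical condition $r>p'$ is attained.
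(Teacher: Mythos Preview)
Your strategy is broadly the same as the paper's --- dyadic decomposition in $|t|$, stationary phase to exhibit the propagator $e^{is(-\Delta)^{\beta/2}}$, and for $n=1$ an appeal to local smoothing --- but there is a real gap at the point where you invoke a ``Lee--Rogers--Seeger variation-norm square function estimate'' in the full range $r>p'/n$. No such result is available as a black box. What Lee--Rogers--Seeger actually prove (Proposition~\ref{LRSsquare} in the paper) is an $L^2_t$ square-function bound
\[
\Big\|\Big(\int_I |e^{it(-\Delta)^{\beta/2}} f|^2\,dt\Big)^{1/2}\Big\|_{L^p}\lesssim\|f\|_{W^{\beta,p}},
\]
valid only for $p>2(n+2)/n$ when $n\ge 2$ (and $p\ge 2$ when $n=1$), and with a derivative loss. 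Two ingredients are then needed that your sketch does not supply. First, one must pass from the $V^r$-in-$u$ norm to an $L^q_u$ norm; the paper does this via a Plancherel--P\'olya inequality (Proposition~\ref{PPinequality}), exploiting that for fixed $j,\ell$ the $u$-Fourier support is effectively band-limited. This introduces a factor $2^{\ell\alpha/q}$ which must later be absorbed by the derivative gain from stationary phase. Second, and more seriously, even after this step one has only a single-$j$ estimate; summing the $\ell^2_j$ square function over $j\in\Z$ is not automatic, and the paper appeals to a vector-valued version of Seeger's theorem on multipliers with localized bounds (Proposition~\ref{prop:Seeger88}) to obtain it at the cost of a logarithmic factor. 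Your ``Schur-type almost-orthogonality'' handles the off-diagonal $(j,m)$ pairs but does not explain why the diagonal contributions sum.

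A second gap concerns the range $p\le 2(n+2)/n$ for $n\ge 2$. The Lee--Rogers--Seeger estimate does not cover it, and neither does your proposal. The paper first establishes Proposition~\ref{main2} for $p>2(n+2)/n$ as above, then for $\tfrac{2n}{2n-1}<p\le 2$ via a separate Hardy-space endpoint argument (Proposition~\ref{170713prop5.1} and Theorem~\ref{thm5.2}), and finally interpolates with Stein--Wainger's maximal inequality \eqref{170712e1.5} to reach all of $p\in(1,\infty)$. Your one-dimensional endgame (Sobolev embedding in $s$ plus local smoothing) is close to what the paper does --- it takes $q=r=p$ in \eqref{eq:Lplqlq} and applies Theorem~\ref{180320thm1.6} --- but note that this alone does not suffice: the paper must interpolate the resulting bound \eqref{eq:localsmooth2} against the $q=2$ square-function bound \eqref{eq:squarefcn1} to get decay for general $r>2$, $p>2$.
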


The proof of Proposition~\ref{main1} depends on a jump function inequality of Jones, Seeger and Wright \cite{MR2434308} that is based on a L\'epingle inequality for martingales. 

By interpolation with the inequality \eqref{170712e1.5} of Stein and Wainger \cite{MR1879821}, it suffices to consider the case $p\in(\frac{2n}{2n-1},\infty)$ to prove Proposition~\ref{main2}. The proof of Proposition~\ref{main2} then depends on a square function estimate for Schr\"odinger-like equations, which is due to Lee, Rogers and Seeger \cite{MR2946085}. In one dimension, we additionally need a local smoothing estimate for these equations. 
The following local smoothing result is more than sufficient for our needs: indeed we will only need the following estimate for $n = 1$ and some $p < \infty$. We are including the full theorem here only because it may be of independent interest.

\begin{thm}\label{180320thm1.6}
Let $\gamma>1$ be a real number and let $I$ be a compact time interval. For any dimension $n\ge 1$ and exponent $p < \infty$ satisfying 
\beq\label{p-exponent}
\begin{cases}
\hfill p>\frac{2(4n+7)}{4n+1}  & \hfill \text{ if } n\equiv -1 \mod 3\\
\hfill p>\frac{2n+3}{n}  & \hfill \text{ if } n\equiv 0 \mod 3\\
\hfill p>\frac{4(n+2)}{2n+1} & \hfill \text{ if } n\equiv 1 \mod 3
\end{cases}
\endeq
we have
\beq \label{eq:ls}
\left(\int_{\R^n\times I} \left| \int_{\R^n}e^{ix \cdot \xi}\hat{f}(\xi)e^{it|\xi|^{\gamma}} d\xi\right|^p dxdt\right)^{1/p} \lesim_{\epsilon} \|f\|_{W^{\beta+\epsilon,p}(\R^n)},
\endeq
whenever $\epsilon > 0$ and $$\frac{\beta}{\gamma}=n\left (\frac{1}{2}-\frac{1}{p}\right )-\frac{1}{p}.$$ Here we write $W^{s,p}(\R^n) = (I-\Delta)^{-s/2}L^p(\R^n)$ to denote the standard Bessel potential space.
\end{thm}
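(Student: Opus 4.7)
I would prove this theorem by induction on the frequency scale, running a Bourgain--Guth broad-narrow iteration whose broad step exploits the Bourgain--Demeter $\ell^2$ decoupling theorem for hypersurfaces of positive Gaussian curvature. First, by Littlewood--Paley decomposition (note that all exponents in \eqref{p-exponent} satisfy $p>2$, as can be checked case by case), it suffices to establish the estimate with $\widehat{f}$ supported in a dyadic annulus $\{|\xi|\sim R\}$, with loss $R^{\beta+\epsilon}$. A parabolic rescaling $\xi=R\eta$, $x=\widetilde x/R$, $t=\widetilde t/R^\gamma$ converts this into an extension estimate for $g$ supported in the unit annulus $A\subset\R^n$:
\[
\|Eg\|_{L^p(B_{R^\gamma}^{n+1})}\lesssim_{\epsilon} R^{\gamma\,\sigma(n,p)+\epsilon}\|g\|_{L^p(A)},\qquad Eg(\widetilde x,\widetilde t)=\int_A e^{i(\widetilde x\cdot\eta+\widetilde t|\eta|^\gamma)}g(\eta)\,d\eta,
\]
where $\sigma(n,p)=n(\frac12-\frac1p)-\frac1p$. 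The associated hypersurface $\Sigma=\{(\eta,|\eta|^\gamma):\eta\in A\}\subset\R^{n+1}$ has $n$ nonvanishing principal curvatures (since $\gamma>1$), so the Bourgain--Demeter decoupling theorem applies to $\Sigma$ after a harmless partition of unity and affine normalization.

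Next, I would fix a large parameter $K=K(\epsilon)$ and partition $A$ into caps $\tau$ of diameter $K^{-1}$. Following Bourgain--Guth, at each space-time point $(\widetilde x,\widetilde t)$ one of two alternatives holds: either there exist $k$ caps $\tau_1,\dots,\tau_k$ whose tangent hyperplanes to $\Sigma$ are $K^{-O(1)}$-transverse and each of whose contributions $|E_{\tau_j}g|$ is within a factor $K^{-O(1)}$ of $\max_\tau|E_\tau g|$ (the $k$-\emph{broad} case), or most of the contribution concentrates on caps clustered inside a lower-dimensional algebraic subvariety (the \emph{narrow} case). In the broad case I would apply the $k$-broad version of $\ell^2$ decoupling, which follows from the Bourgain--Demeter theorem restricted to the $k$ transverse pieces and a multilinear restriction/Kakeya input. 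In the narrow case I would lift each cluster of narrow caps to a thin neighborhood of the associated lower-dimensional subvariety, parabolically rescale each cluster, and close by the induction hypothesis applied at a smaller frequency scale.

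The main obstacle is the exponent bookkeeping that produces the three cases in \eqref{p-exponent}. For each $k\in\{2,\ldots,n+1\}$ the $k$-broad step yields an estimate with a specific $p$-dependence determined by the $\ell^2$ decoupling exponent of the $k$ transverse slabs, while the narrow recursion costs a power of $K$ depending on $k$; balancing these two losses and optimizing over integer $k$ produces the three expressions in \eqref{p-exponent}, the trichotomy arising from the congruence condition required for the optimal $k$ to be integral. A secondary technical point, routine to verify, is that $\Sigma$ is not a paraboloid but a smooth perturbation; since its Gaussian curvature is nonvanishing on $A$, the relevant decoupling and multilinear inequalities transfer directly via finite partitions of unity and linear changes of coordinates.
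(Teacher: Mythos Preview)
Your proposal assembles the right ingredients---Bourgain--Guth iteration, multilinear restriction, Bourgain--Demeter decoupling, and induction on scales---but it misplaces the decoupling step and thereby misses the mechanism that actually produces the exponents in \eqref{p-exponent}. In the paper's argument the broad/transverse case at each stage is handled by the Bennett--Carbery--Tao multilinear \emph{restriction} estimate, not by decoupling; decoupling is applied exactly once, at the \emph{terminal narrow} step, after repeated dimension reduction. Concretely, the iteration descends through a chain of affine subspaces $L_{n-1}\supset L_{n-2}\supset\cdots\supset L_{n-k-1}$: at stage $j$ one has three (not two) alternatives---clustered (parabolic rescaling closes the induction), transverse (an $(n-j+1)$-linear BCT estimate applies, giving the constraint $p\ge \tfrac{2(n-j+1)}{n-j}$), or concentrated near an $(n-j-1)$-dimensional subspace (descend and repeat). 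After $k$ rounds one is left with caps along an $(n-k-1)$-dimensional subspace, and \emph{only then} does one invoke $\ell^2$ decoupling to separate those caps, followed by a final parabolic rescaling that yields the binding constraint $p>\tfrac{2(n+k+3)}{n+k+1}$. The trichotomy in \eqref{p-exponent} comes from choosing $k$ to be the largest integer with $k\le\tfrac{n-1}{3}$, so that $k$ takes one of three forms depending on $n\bmod 3$.

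Your two-case broad/narrow split, with decoupling in the broad case and rescaling in the narrow case, does not implement this dimension-reduction cascade. Applying decoupling directly to $k$ transverse caps in the broad case would not yield the same arithmetic; the gain there comes from multilinear restriction, whose strength degrades as the multilinearity drops (hence the constraints $p\ge\tfrac{2(n-j+1)}{n-j}$), while the decoupling input is needed precisely to sum the narrow pieces lying along a low-dimensional subspace where no transversality is available. Without the iterated descent and the terminal placement of decoupling, your scheme as written would recover at best the standard Bourgain--Guth range rather than the improved exponents claimed.
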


Let us take a moment to compare Theorem~\ref{180320thm1.6} with results in the existing literature. In \cite{Rog08}, Rogers considered the case $\gamma = 2$, namely a local smoothing estimate for the Schr\"{o}dinger propagator $e^{it\Delta}$. He proved that (\ref{eq:ls}) holds whenever $\gamma = 2$, $p \in (2+\frac{4}{n+1},\infty)$ and $\epsilon > 0$ (in the rest of this section $\beta$ will always be as specified in Theorem~\ref{180320thm1.6}). This was improved subsequently in Rogers and Seeger \cite{MR2629687}, who obtained the endpoint case $\epsilon = 0$ for all $\gamma > 1$: they established that (\ref{eq:ls}) holds with $\varepsilon = 0$ for all $p \in (2+\frac{4}{n+1},\infty)$ and all $\gamma > 1$. In particular, this implies Theorem \ref{180320thm1.6} for $n=1,2,3$. Theorem \ref{180320thm1.6} gives a larger range of $p$ in dimensions $n\ge 4$, albeit with an $\varepsilon$-loss in smoothness. We also note that in the case $\gamma = 2$ (i.e. for the Schr\"{o}dinger propagator), Lee, Rogers and Seeger \cite{LRS13} obtained an improvement of the aforementioned result of Rogers and Seeger \cite{MR2629687}; in particular, in Proposition 5.2 of \cite{LRS13}, they proved that if the dual Fourier restriction conjecture holds at exponent $q_0$, in the sense that $$\|Ef\|_{L^{q_0}(\R^{n+1})} \lesssim \|f\|_{L^{q_0}([0,1]^n)}$$ for some exponent $\gamma_0 < \frac{2(n+3)}{n+1}$, where $E$ is the Fourier extension operator for the paraboloid in $\R^{n+1}$ given by
\begin{equation} \label{eq:extconj}
Ef(x,t) = \int_{[0,1]^n} f(\xi) e^{i (x \cdot \xi + t |\xi|^2)} d\xi, \quad (x,t) \in \R^n \times \R,
\end{equation}
then (\ref{eq:ls}) holds for $\gamma = 2$ with $\epsilon = 0$ whenever $p \in (q_*,\infty)$, where $q_*$ is defined by
$$
q_* := \frac{2(n+3)}{n+1} \left(1- \gamma(n,q_0) \right), \quad \text{with} \quad \gamma(n,q_0) := \frac{ \frac{1}{q_0} - \frac{n+1}{2(n+3)} }{ n \left( \frac{n+1}{2} - \frac{n+2}{q_0} \right) }.
$$
A direct computation shows that
$$
q_* = 2 + \frac{4}{n} - \frac{2}{n^2 - \frac{4-q_0}{q_0-2} n}.
$$
As a result, even if one can establish (\ref{eq:extconj}) in all dimensions $n$ with $q_0 = q_0(n)$ that decays like $q_0(n) = 2 + \frac{2+\lambda}{n} + O(\frac{1}{n^2})$ for some $\lambda > 0$ (the Fourier restriction conjecture shows that the best one can hope for is $\lambda = 0$), using the above result of Lee, Rogers and Seeger, one can only establish the local smoothing estimate (\ref{eq:ls}) for $p \in (q_*(n),\infty)$ where $$q_*(n) = 2 + \frac{4}{n} + O\left(\frac{1}{n^2}\right).$$ On the contrary, if $p_*(n)$ is the Bourgain-Guth exponent given by the right hand sides of (\ref{p-exponent}), we see that 
$$
p_*(n) = 2 + \frac{3}{n} + O\left(\frac{1}{n^2} \right),
$$
so our range of the exponent $p$ is larger than that of Lee, Rogers and Seeger in high dimensions $n$, even for the Schr\"{o}dinger equation case.

Contrary to the work of Rogers and Seeger \cite{MR2629687}, which relied on bilinear restriction estimates, our proof of Theorem \ref{180320thm1.6} relies on the Bourgain--Guth argument \cite{MR2860188} (see also the presentation in \cite{MR3592159}), and the Bourgain--Demeter decoupling inequality \cite{MR3374964}; see Wolff \cite{Wol00} and {\L}aba and Wolff \cite{LW02} for some earlier foundational work on decoupling inequalities, and their applications to local smoothing estimates. The multilinear estimates developed by Guth \cite{Guth16} might be useful in establishing \eqref{eq:ls} for a larger range of exponents, but we did not pursue this here. \\

\noindent{\bf Organization of the paper.} In Section \ref{sect:prelim} we state two preliminary results, namely a consequence of the classical L\'{e}pingle inequality, and a consequence of the Plancherel--P\'{o}lya inequality. In Section \ref{section:long-jumps} we control long jumps: that is, we will prove Proposition \ref{main1}. The treatment for short jumps (that is, the proof of Proposition \ref{main2}), will be split into two parts. In Section \ref{section:short-jump-large} we prove Proposition \ref{main2} in two special cases: $n \geq 2$, $p > \frac{2(n+2)}{n}$, and $n = 1$, $p > 2$. These are the main cases to be considered. In Section \ref{section:short-jump-small} we indicate the modifications necessary to prove the remaining case of Proposition \ref{main2}: namely, $n \geq 2$ and $\frac{2n}{2n-1}<p\leq \frac{2(n+2)}{n}$. The proof of Theorem \ref{170712thm1.2} is in Section \ref{section:counter}. In Section \ref{180320section7} we provide the proof of a vector-valued generalization of a multiplier theorem of Seeger \cite{MR955772}, that we used in the proof of the short jump estimates in Section \ref{section:short-jump-large}. In Section \ref{180320section8} we prove the local smoothing estimates in Theorem \ref{180320thm1.6}. In Section \ref{sect:KL} we refine our Theorem~\ref{main} by obtaining a good bound on the growth of the constant $C$ in \eqref{170614e1.2} as $p=r \to 2^+$ (see Theorem~\ref{thm:mainsharpCp}), and use it to provide an alternative simple proof of a maximal multi-frequency estimate of Krause and Lacey (that is, the Theorem 3.5 in \cite{MR3658135}). \\

\noindent{\bf Acknowledgements.} The authors would like to express their gratitude to Andreas Seeger for many valuable discussions. We also thank Chun-Kit Lai for helpful conversations, and the referees for numerous very constructive comments. Part of this material is based upon work supported by the National Science Foundation under Grant No. DMS-1440140 while the authors were in residence at the Mathematical Sciences Research Institute in Berkeley, California, during the Spring semester of 2017. Guo was also partially supported by a direct grant for research from the Chinese University of Hong Kong (4053295). Roos was also partially supported by the German Academic Scholarship Foundation. Yung was also partially supported by a General Research Fund CUHK14303817 from the Hong Kong Research Grant Council, and direct grants for research from the Chinese University of Hong Kong (3132713, 4053295 and 4441881). 

\section{Prerequisites} \label{sect:prelim}

\subsection{A jump function inequality of Jones, Seeger and Wright}

We recall a jump function inequality for convolutions with dyadic dilations of a fixed measure from \cite[Theorem 1.1]{MR2434308}. It is a consequence of the more classical L\'epingle inequality for martingales. 

\begin{prop}[Jones, Seeger and Wright \cite{MR2434308}] \label{prop:Lepingle}
Let $\sigma$ be a compactly supported finite non-negative Borel measure on $\R^n$ whose Fourier transform satisfies
\[|\widehat{\sigma}(\xi)| \leq C |\xi|^{-a}\] for some $a > 0$. For $k \in \Z$, define $\sigma_k$ by 
\[
\int_{\R^n} f(x)d\sigma_k(x)=\int_{\R^n}f(2^{-k} x)d\sigma(x).
\] 
Then 
\[
\left\| \lambda \sqrt{N_{\lambda}\{ f*\sigma_k \colon k \in \Z \}} \right\|_{L^p(\R^n)} \leq C_p \|f\|_{L^p(\R^n)}
\]
for all $1 < p < \infty$, uniformly in $\lambda > 0$.
\end{prop}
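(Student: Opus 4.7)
The plan is to reduce the jump inequality for the rough dyadic averages $\{f*\sigma_k\}$ to two ingredients: (i) the corresponding jump inequality for \emph{smooth} dyadic averages, which is accessible by L\'{e}pingle's classical martingale inequality, and (ii) a Littlewood--Paley style square function estimate that absorbs the discrepancy between $\sigma$ and a smooth approximant.

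First I would fix a smooth, compactly supported, non-negative bump $\phi$ on $\R^n$ with $\int \phi\,dx = \int d\sigma$, write $\phi_k(x) := 2^{kn}\phi(2^k x)$, and set $\nu_k := \sigma_k - \phi_k$. Since $\widehat{\phi}$ is Schwartz and $\widehat{\sigma}(0)-\widehat{\phi}(0)=0$ with $|\widehat{\sigma}(\xi)|\lesssim |\xi|^{-a}$, the symbol $m(\xi) := \widehat{\sigma}(\xi)-\widehat{\phi}(\xi)$ vanishes linearly at the origin and decays polynomially at infinity, so $|\widehat{\nu_k}(\xi)|\lesssim \min(|2^{-k}\xi|, |2^{-k}\xi|^{-a})$.

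Next I would use the elementary pointwise splitting: if $b_k$ is any sequence, every $\lambda$-jump of $a_k := f*\sigma_k(x)$ either produces a $(\lambda/2)$-jump of $b_k := f*\phi_k(x)$ at the same pair of indices, or contributes at least $\lambda/2$ to $\sum_k |a_k-b_k|^2$ through the endpoints. Counting such contributions yields
\[
\lambda\sqrt{N_\lambda\{f*\sigma_k : k\in\Z\}} \;\lesssim\; \lambda\sqrt{N_{\lambda/2}\{f*\phi_k : k\in\Z\}} \;+\; \Bigl(\sum_{k\in\Z} |f*\nu_k|^2\Bigr)^{1/2}
\]
pointwise. The square function on the right is a vector-valued Fourier multiplier with symbol $(\widehat{\nu_k})_{k\in\Z}$; because $m$ vanishes at $0$ and decays at infinity, the vector-valued Mikhlin/H\"{o}rmander multiplier theorem (equivalently, standard Littlewood--Paley theory applied to a Calder\'{o}n--Zygmund operator valued in $\ell^2$) gives the bound $\|(\sum_k|f*\nu_k|^2)^{1/2}\|_p\lesssim\|f\|_p$ for all $1<p<\infty$.

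It then remains to prove the jump inequality for the \emph{smooth} dyadic averages $\{f*\phi_k\}$. Here I would invoke L\'{e}pingle's inequality for general $L^p$-bounded martingales: the smooth averages $f*\phi_k$ can be dominated by a dyadic martingale on a suitable product space (via a Rota-type construction, or, equivalently, by factoring $\phi_k = \phi_k*\phi_k^{(1)}$ and comparing $\phi_k*g$ to a conditional expectation of $g$ with respect to the dyadic filtration at scale $2^{-k}$). Since $\phi$ is smooth and non-negative, this dominating martingale is controlled on $L^p$ by $\|f\|_p$ uniformly in the auxiliary parameter, so L\'{e}pingle's inequality for martingale jump functions transfers to give $\|\lambda\sqrt{N_\lambda\{f*\phi_k\}}\|_p\lesssim \|f\|_p$ for $1<p<\infty$. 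Combining the three displays proves the proposition.

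The main obstacle I expect is Step three: cleanly dominating the smooth convolution averages by a genuine martingale without losing the full range $1<p<\infty$ or the uniformity in $\lambda$. The splitting in Step one is combinatorial and the square function bound in Step two is routine harmonic analysis, but the martingale comparison is delicate and is precisely the contribution of \cite{MR2434308} beyond the classical L\'{e}pingle theorem.
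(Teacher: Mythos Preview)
The paper does not give its own proof of this proposition: it is quoted verbatim as Theorem~1.1 of Jones, Seeger and Wright \cite{MR2434308} and then applied, so there is nothing in the present paper to compare your argument against. Your outline is in fact a faithful sketch of the strategy in \cite{MR2434308}: split $\sigma_k=\phi_k+\nu_k$ with $\phi$ a smooth bump of matching mass, control the $\nu_k$-piece by the $\ell^2$ square function $\bigl(\sum_k|f*\nu_k|^2\bigr)^{1/2}$ (bounded on $L^p$ by a vector-valued Calder\'on--Zygmund argument using $|\widehat{\nu_k}(\xi)|\lesssim\min(|2^{-k}\xi|,|2^{-k}\xi|^{-a})$), and reduce to the jump inequality for the smooth family $\{f*\phi_k\}$. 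Your identification of the third step as the crux is accurate: in \cite{MR2434308} this is handled not by a Rota-type embedding but by a second comparison, this time of $f*\phi_k$ with the dyadic martingale conditional expectations $\mathbb{E}_k f$, producing another square function of the same type and leaving only the genuine martingale jump inequality, for which L\'epingle's theorem applies directly. One small point worth tightening: the pointwise jump splitting you state is correct, and the counting argument behind it uses that the indices $s_1<t_1<\dots<s_N<t_N$ are all distinct, so that $\sum_j|c_{t_j}-c_{s_j}|^2\le 2\sum_k|c_k|^2$.
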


We will apply this proposition as follows. Let $S$ be a non-negative smooth function with compact support in $[-1,1]^n$ and $\int_{\R^n} S(x) dx = 1$. 
For $k \in \Z$ and any Schwartz function $f$ on $\R^n$, let 
\[S_k f(x) = f*S_k(x),\]
where $S_k(x)= 2^{kn} S(2^k x)$.
If $\sigma$ is the measure on $\R^n$ given by 
\[
\int_{\R^n} f(x) d\sigma(x) = \int_{\R^n} f(x) S(x) dx,
\]
then $\sigma_k(x)$ coincides with $S_k(x) dx$, and hence $f * \sigma_k = S_k f$ for all $k \in \Z$. Proposition~\ref{prop:Lepingle} then gives
\begin{equation} \label{eq:Sk_jump}
\left\| \lambda \sqrt{N_{\lambda}\{ S_k f \colon k \in \Z \}} \right\|_{L^p(\R^n)} \leq C_p \|f\|_{L^p(\R^n)} 
\end{equation}
for all $1 < p < \infty$, uniformly in $\lambda > 0$.  Note that $\widehat{S}(0) = 1$ and $\widehat{S}(\xi)$ decreases rapidly to zero as $|\xi| \to \infty$. So later it helps to think of $\widehat{S}(\xi)$ as localized to $|\xi| \lesssim 1$, and interpret $S_k f$ as a localization of $f$ to frequency $\lesssim 2^k$.\\

Next, let $\{c_{\ell}\}_{\ell=0}^{\infty}$ be a complex sequence with $|c_{\ell}| = O(2^{-\alpha \ell})$ for some $\alpha > 0$. Let $\tilde{S}_k$ be the operator defined by
\begin{equation} \label{eq:Tk_def}
\tilde{S}_k f := \sum_{\ell = 0}^{\infty} c_{\ell} S_{k-\ell} f.
\end{equation}
We will use \eqref{eq:Sk_jump}
 to prove that 
\begin{equation} \label{eq:clSk_jump}
\left\| \lambda \sqrt{N_{\lambda}\{ \tilde{S}_k f \colon k \in \Z \}} \right\|_{L^p(\R^n)} \leq C_p \|f\|_{L^p(\R^n)}
\end{equation}
for all $1 < p < \infty$, uniformly in $\lambda > 0$. Recall the definition of the jump norm $N_{\lambda}\{ \tilde{S}_k f(x) \colon k \in \Z \}$: it is the supremum of all positive integers $N$ for which there exists a strictly increasing sequence $s_1 < t_1 < s_2 < t_2 < \dots < s_N < t_N$, all of which are in $\Z$, such that 
\begin{equation} \label{eq:clSk_jump2}
|\tilde{S}_{t_j}f(x) - \tilde{S}_{s_j}f(x)| > \lambda
\end{equation}
for all $j=1,\dots,N$. But if $s_1 < t_1 < s_2 < t_2 < \dots < s_N < t_N$ is as such, then for all $j=1,\dots,N$ we have
\[
|S_{t_j-\ell}f(x) - S_{s_j-\ell}f(x)| \gtrsim 2^{\frac{\alpha}{2} \ell}\lambda
\]
for at least one $\ell \ge 0$.
Hence, 
\[
N_{\lambda}\{ \tilde{S}_k f(x) \colon k \in \Z \} \lesssim \sum_{\ell = 0}^{\infty} N_{2^{\frac{\alpha}{2} \ell}\lambda}\{ S_k f(x) \colon k \in \Z \},
\]
which implies that
\[
\sqrt{N_{\lambda}\{ \tilde{S}_k f \colon k \in \Z \}}\lesssim \sum_{\ell = 0}^{\infty} \sqrt{N_{2^{\frac{\alpha}{2} \ell}\lambda}\{ S_k f \colon k \in \Z \}}.
\]
This further implies that
\[
\begin{split}
& \left\| \lambda \sqrt{N_{\lambda}\{ \tilde{S}_k f \colon k \in \Z \}} \right\|_{L^p(\R^n)} \lesssim \sum_{\ell = 0}^{\infty} 2^{-\frac{\alpha}{2} \ell} \left\|2^{\frac{\alpha}{2} \ell}\lambda \sqrt{N_{2^{\frac{\alpha}{2} \ell}\lambda}\{ S_k f \colon k \in \Z \}}\right\|_{L^p(\R^n)} \lesssim \|f\|_p.
\end{split}
\]
This finishes the proof of the estimate \eqref{eq:clSk_jump}. \\

\subsection{An inequality of Plancherel and P\'{o}lya}

Next, let $F(u)$ be an $L^2$ function on $\R$ whose Fourier transform $\widehat{F}(\xi)$ is supported on the set $|\xi| \leq 1$. Such an $F$ is sometimes said to be in a Paley-Wiener space. An inequality of Plancherel and P\'{o}lya \cite{PP37}, \cite{PP38} says that for any such $F$ and any $r \in [1,\infty)$, we have
\begin{equation} \label{eq:PP}
\sum_{j \in \Z} |F(j)|^r \leq C_r \int_{\R} |F(u)|^r du
\end{equation}
where $C_r$ is a constant independent of $F$. This holds because if $\widehat{F}$ is supported on $|\xi|\le 1$, then, by the uncertainty principle, $F$ is essentially constant on every interval of length one (see also  \cite{MR1836633} for an alternative proof based on complex analysis).

From (\ref{eq:PP}) we can deduce the following variation-norm estimate (see also page 6729 of \cite{MR2434308}):

\begin{prop}\label{PPinequality}
Let $F(u)$ be a function on $\R$ whose Fourier transform $\hat{F}(\xi)$ is supported on the set $\{|\xi| \leq \lambda\}$. Then for every $1 \leq q \leq r < \infty$, we have
\begin{equation} \label{eq:Variation}
V^r \{F(u) \colon u \in \R\} \leq A_{q,r} \lambda^{1/q} \|F\|_{L^q},
\end{equation}
with a constant $A_{q,r}$ depending only on $q$ and $r$.
\end{prop}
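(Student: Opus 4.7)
My plan is to first reduce to the diagonal case $r = q$ by monotonicity of the variation norms in $r$, and then handle the diagonal case by Riesz--Thorin interpolation applied to the linear map sending $F$ to its sequence of differences $(F(u_j) - F(u_{j-1}))_j$ along a fixed finite partition $u_0 < u_1 < \cdots < u_J$ of $\R$.

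For the monotonicity reduction, the embedding $\ell^q \hookrightarrow \ell^r$ of norm $1$ (valid whenever $q \le r$) applied to the sequence of differences gives $V^r F \le V^q F$, so it suffices to establish the inequality in the special case $r = q$. For that case I would fix the partition and view
\[ T \colon F \mapsto (F(u_j) - F(u_{j-1}))_{j=1}^J \]
as a linear operator from the subspace of $L^p(\R)$ consisting of functions with Fourier support contained in $\{|\xi|\le \lambda\}$ into $\ell^p$. At the endpoint $p = \infty$ one trivially has $\|TF\|_{\ell^\infty} \le 2\|F\|_{L^\infty}$. At the endpoint $p = 1$, the fundamental theorem of calculus (together with disjointness of the intervals $[u_{j-1}, u_j]$) yields
\[ \|TF\|_{\ell^1} = \sum_{j=1}^J |F(u_j) - F(u_{j-1})| \le \sum_{j=1}^J \int_{u_{j-1}}^{u_j} |F'(t)|\,dt \le \|F'\|_{L^1(\R)}, \]
after which Bernstein's $L^1$ inequality for band-limited functions gives $\|F'\|_{L^1} \lesssim \lambda \|F\|_{L^1}$. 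Riesz--Thorin interpolation between these two endpoints then produces $\|TF\|_{\ell^q} \lesssim_q \lambda^{1/q} \|F\|_{L^q}$, with an implicit constant independent of the partition, and taking the supremum over partitions gives the desired $V^q F \lesssim_q \lambda^{1/q} \|F\|_{L^q}$.

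I do not anticipate any serious obstacle. The only nontrivial input is Bernstein's $L^1$ inequality, which I would prove by a reproducing formula: choose $\phi$ Schwartz with $\widehat{\phi} \equiv 1$ on $[-1,1]$ and set $\phi_\lambda(x) := \lambda \phi(\lambda x)$; then the band-limiting assumption forces $F = F \ast \phi_\lambda$, hence $F' = F \ast \phi_\lambda'$, and Young's inequality combined with the scaling identity $\|\phi_\lambda'\|_{L^1(\R)} = \lambda \|\phi'\|_{L^1(\R)}$ yields the estimate. A minor technical point is that Riesz--Thorin must be applied on the subspace of band-limited $L^p$ functions rather than all of $L^p$, but this causes no difficulty since that subspace is preserved under the standard simple-function density arguments.
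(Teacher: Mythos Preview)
Your proof is correct and takes a genuinely different route from the paper's argument. The paper proceeds by a direct combinatorial estimate: after rescaling to $\lambda=1$, it groups the partition points $u_1<\dots<u_k$ into unit intervals $[n_s,n_s+1]$ with integer endpoints, controls the within-interval increments by the mean-value theorem together with Bernstein's inequality $\|F'\|_{L^\infty}\lesssim\|F\|_{L^r}$, and controls the integer-point contributions $|F(n_s)|^r$ via the Plancherel--P\'olya sampling inequality $\sum_{j\in\Z}|F(j)|^r\lesssim\|F\|_{L^r}^r$; a final Bernstein embedding $\|F\|_{L^r}\lesssim\|F\|_{L^q}$ handles $q<r$. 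Your argument instead reduces to $r=q$ by monotonicity of variation norms and then interpolates the linear map $F\mapsto(F(u_j)-F(u_{j-1}))_j$ between the trivial $L^\infty\to\ell^\infty$ bound and an $L^1\to\ell^1$ bound obtained from the fundamental theorem of calculus plus Bernstein in $L^1$. This is cleaner and sidesteps the Plancherel--P\'olya sampling inequality entirely. The one place that deserves a sharper sentence is the interpolation on a subspace: rather than appealing vaguely to density, simply observe that the operator $\widetilde{T}F:=T(F*\phi_\lambda)$ is defined on all of $L^p$, agrees with $T$ on band-limited $F$ (since $F*\phi_\lambda=F$ there), and inherits the same endpoint bounds because $\|F*\phi_\lambda\|_{L^p}\le\|\phi\|_{L^1}\|F\|_{L^p}$; ordinary Riesz--Thorin then applies directly to $\widetilde{T}$.
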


\begin{proof}By rescaling we may assume that $\lambda = 1$. Now let $k \in \N$  and $u_1 < \dots < u_k$ be a strictly increasing sequence in $\R$. We let $\kappa(0)=1$, $n_1 = \lfloor u_{\kappa(0)} \rfloor$ and let $\kappa(1)$ be the largest integer in $\{1,\dots,k\}$ such that $u_{\kappa(1)} < n_1 + 1$. If $\kappa(1) < k$, we let $n_2 = \lfloor u_{\kappa(1)+1} \rfloor$ and let $\kappa(2)$ be the largest integer  in $\{1,\dots,k\}$ such that $u_{\kappa(2)} < n_2 + 1$. Clearly this process will terminate in finitely many, say $m$, steps. In this way we collect the points $u_1, \dots, u_k$ into intervals $[n_1, n_1+1]$, $[n_2,n_2+1]$, $\dots$, $[n_m,n_m+1]$ of length at most~1.  Now for $s = 1,\dots,m-1$, we have, by the triangle inequality, that
\begin{align*}
&\, |F(u_{\kappa(s)})-F(u_{\kappa(s)+1})| ^r \\
\lesssim & \,   |F(u_{\kappa(s)})-F(n_s+1)|^r + |F(n_s+1)|^r + |F(n_{s+1})|^r + |F(n_{s+1})-F(u_{\kappa(s)+1})|^r.
\end{align*}
This shows that 
\begin{align*}
& \sum_{i=1}^{k-1} |F(u_i)-F(u_{i+1})|^r \lesssim \sum_{s = 1}^m \left( |F(n_s)|^r + |F(n_s+1)|^r \right) \\
&\quad + \sum_{s = 1}^m \left( |F(n_s)-F(u_{\kappa(s-1)})|^r + \sum_{\kappa(s-1) \leq i < \kappa(s)} |F(u_i)-F(u_{i+1})|^r + |F(u_{\kappa(s)})-F(n_s+1)|^r \right).
\end{align*}
(Indeed, for $s = 1$, we do not need the terms $|F(n_s)|^r$ and $|F(n_s)-F(u_{\kappa(s-1)})|^r$ on the right hand side; similarly for $s = m$, we do not need the terms $|F(n_s+1)|^r$ and $|F(u_{\kappa(s)})-F(n_s+1)|^r$. But there is no harm putting them in, which makes the expression on the right hand side more symmetric.) By the mean-value theorem, for $s = 1, \dots, m$, we have
\begin{align*}
& |F(n_s)-F(u_{\kappa(s-1)})|^r + \sum_{\kappa(s-1) \leq i < \kappa(s)} |F(u_i)-F(u_{i+1})|^r + |F(u_{\kappa(s)})-F(n_s+1)|^r \\
 \leq & \|F'\|_{L^{\infty}}^r \left(  |n_s-u_{\kappa(s-1)}|^r + \sum_{\kappa(s-1) \leq i < \kappa(s)} |u_i-u_{i+1}|^r + |u_{\kappa(s)}-(n_s+1)|^r \right),
\end{align*}
and the bracket in the last line is $\leq 1$ since we have the elementary inequality \[t_1^r + \dots + t_{\sigma}^r \leq (t_1+\dots+t_{\sigma})^r\] whenever $t_1, \dots, t_{\sigma} \geq 0$ and $1 \leq r < \infty$. Now since $\widehat{F}$ is supported on $|\xi| \leq 1$, Bernstein's inequality implies that \[\|F'\|_{L^{\infty}} \lesssim_r \|F\|_{L^r}\]  whenever $1 \leq r < \infty$. Altogether, we see that
\begin{align*}
\sum_{i=1}^{k-1} |F(u_i)-F(u_{i+1})|^r 
&\lesssim_r \|F\|_{L^r}^r + \sum_{s = 1}^m \left( |F(n_s)|^r + |F(n_s+1)|^r \right)  \\
&\lesssim_r \|F\|_{L^r}^r + \sum_{j \in \Z} |F(j)|^r \lesssim_r \|F\|_{L^r}^r
\end{align*}
whenever $1 \leq r < \infty$, the last inequality following from (\ref{eq:PP}). Since $\widehat{F}$ is supported on $\{|\xi| \leq 1\}$ and $1 \leq q \leq r$, Bernstein's inequality again implies that $\|F\|_{L^r} \lesssim_{q,r} \|F\|_{L^q}$. This completes the proof of (\ref{eq:Variation}). 
\end{proof}

\section{Long jump estimates}\label{section:long-jumps}

Our goal in this section is to prove Proposition \ref{main1}. Indeed, we will prove something slightly stronger, including the case $0<\alpha<1$.

\begin{prop}
Fix $\alpha > 0$, $\alpha \ne 1$. For $1 < p < \infty$, we have
\begin{equation} \label{eq:Hk_jump}
\| \lambda \sqrt{N_{\lambda} \{\mathcal{H}^{(2^{k\alpha})} f \colon k \in \Z\} } \|_{L^p(\R^n)} \lesssim \|f\|_{L^p(\R^n)}
\end{equation}
uniformly in $\lambda > 0$. Here $\mathcal{H}^{(2^{k\alpha})}$ is defined as in (\ref{eq:Hu_def}).
\end{prop}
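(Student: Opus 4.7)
The plan is to reduce Proposition~\ref{main1} to the jump inequality \eqref{eq:clSk_jump} from Section~\ref{sect:prelim}. Since $N_\lambda\{a_k\}$ is unchanged if we add a $k$-independent quantity to every $a_k$, and since $Tf := \mathrm{p.v.}\,K\ast f$ is $k$-independent, the central claim I would aim for is a decomposition
\begin{equation} \label{eq:plandecomp}
\mathcal{H}^{(2^{k\alpha})} f = \tilde{S}_k(Tf) + R_k f,
\end{equation}
where $\tilde{S}_k$ is of the form \eqref{eq:Tk_def}---a sum $\sum_{\ell\ge 0} c_\ell S_{k-\ell}$ with $|c_\ell|=O(2^{-\alpha\ell})$---and $R_k$ has Fourier multiplier supported in the annulus $|\xi|\sim 2^k$, uniformly bounded in $k$.

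To produce \eqref{eq:plandecomp} I would dyadically decompose the kernel: fix a nonnegative bump $\phi$ supported on $\{|t|\sim 1\}$ with $\sum_{\ell\in\Z}\phi(2^\ell t)\equiv 1$ for $t\neq 0$, set $K_\ell(t)=K(t)\phi(2^\ell t)$, and write $\mathcal{H}^{(u)} f = \sum_\ell \mathcal{H}_\ell^{(u)} f$. On $\mathrm{supp}\,K_\ell$ the dimensionless quantity $2^k|t|$ has size $2^{k-\ell}$, so the phase $(2^k|t|)^\alpha$ is small for $\ell>k$ and oscillatory for $\ell<k$; this is exactly why the natural frequency scale $|\xi|\sim 2^k$ is the same whether $\alpha>1$ or $0<\alpha<1$. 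For $\ell>k$ I would Taylor-expand $e^{i 2^{k\alpha}|t|^\alpha}=1+O(2^{(k-\ell)\alpha})$: the main piece $\sum_{\ell>k}K_\ell\ast f = K^{>k}\ast f$ is the Calder\'on--Zygmund truncation of $Tf$ to $|t|<2^{-k}$, which by a standard Littlewood--Paley identity equals $Tf-S_k(Tf)$ up to harmless smooth terms, while the $O(2^{(k-\ell)\alpha})$ corrections collapse into a geometrically decaying combination $\sum_{j\ge 0} 2^{-\alpha j} S_{k-j}(Tf)$; both contributions fit into $\tilde S_k(Tf)$ as in \eqref{eq:Tk_def}. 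For $\ell\le k$ a stationary phase computation shows that $\widehat{\mathcal{H}_\ell^{(2^{k\alpha})}}$ is concentrated on $|\xi|\sim 2^k$ with amplitude gain $O(2^{-(k-\ell)n/2})$, so summing over $\ell\le k$ produces an operator $R_k$ with the desired annular, uniformly bounded symbol.

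Given \eqref{eq:plandecomp} I would finish by treating the two pieces separately. For the main term, applying \eqref{eq:clSk_jump} to $Tf\in L^p$ together with $L^p$-boundedness of $T$ yields
\[
\|\lambda\sqrt{N_\lambda\{\tilde{S}_k(Tf):k\in\Z\}}\|_p \lesssim \|Tf\|_p \lesssim \|f\|_p.
\]
For the remainder, the elementary bound $|a-b|^2\le 2(|a|^2+|b|^2)$ applied to each jump pair $(s_j,t_j)$ (whose indices are all distinct) gives the pointwise estimate
\[
\lambda\sqrt{N_\lambda\{R_k f : k\in\Z\}}\lesssim \Bigl(\sum_{k\in\Z}|R_k f|^2\Bigr)^{1/2},
\]
and the annular localization of the $R_k f$ yields an $L^p$ bound $\lesssim \|f\|_p$ by Littlewood--Paley theory, giving \eqref{eq:Hk_jump} upon adding the two contributions. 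The main technical obstacle I anticipate is the stationary-phase analysis in the regime $\ell\le k$: one must show enough $\ell^2$-summability in $k-\ell$ so that $\sum_{\ell\le k}\mathcal{H}_\ell^{(2^{k\alpha})}$ actually assembles into an $R_k$ with a uniformly bounded annular multiplier. This is essentially the multiplier estimate at the heart of Stein--Wainger \cite{MR1879821}, and it is the one place where the assumption $\alpha\ne 1$ enters critically, since at $\alpha=1$ the phase is linear in $|t|$ and the stationary-phase decay is lost.
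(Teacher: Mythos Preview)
Your plan has two genuine gaps. First, the claim that the truncated singular integral $\sum_{\ell>k}K_\ell\ast f$ equals $Tf-S_k(Tf)$ up to terms absorbable into $\tilde S_k(Tf)$ is not a ``standard Littlewood--Paley identity''; the multiplier of the difference is a bounded function of $2^{-k}\xi$ that neither decays at infinity nor is localized near $|\xi|\sim 1$, so it does not fit into your framework. The paper handles the truncated singular integral $\widetilde{\mathcal{H}}_{k,0}$ by invoking the jump inequality of Campbell--Jones--Reinhold--Wierdl \cite{MR1953540} as a black box, and this ingredient is not easily replaced. Second, your stationary-phase claim that $\widehat{\mathcal{H}_\ell^{(2^{k\alpha})}}$ is concentrated on $|\xi|\sim 2^k$ for all $\ell\le k$ is incorrect: the phase $-t\cdot\xi+2^{k\alpha}|t|^\alpha$ on $|t|\sim 2^{-\ell}$ has its critical point at $|\xi|\sim 2^{k\alpha-\ell(\alpha-1)}$, so as $\ell$ ranges over $(-\infty,k]$ the critical frequencies sweep out all scales $\ge 2^k$ (for $\alpha>1$) or $\le 2^k$ (for $\alpha<1$). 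Consequently $R_k$ cannot have an annular multiplier, and your square-function step $\|(\sum_k|R_kf|^2)^{1/2}\|_p\lesssim\|f\|_p$ fails.

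The paper's argument is structured differently: after peeling off $\widetilde{\mathcal{H}}_{k,0}$ via CJRW, the main term is $\sum_{\ell\ge 0}c_\ell S_{k-\ell}f$ (applied to $f$, not to $Tf$), handled by \eqref{eq:clSk_jump}. The remaining errors $\mathcal{H}_{k,\ell}S_{k-\ell}f-c_\ell S_{k-\ell}f$ and $\mathcal{H}_{k,\ell}(f-S_{k-\ell}f)$ are controlled by square functions, but this requires a \emph{further} Littlewood--Paley decomposition $S_{k-\ell}f=\sum_{j\ge 1}\Delta_{k-\ell-j}f$, $f-S_{k-\ell}f=\sum_{j\ge 0}\Delta_{k-\ell+j}f$, together with pointwise multiplier estimates (the van der Corput bounds of Lemma~\ref{lem:ml}) that give geometric decay in \emph{both} $\ell$ and $j$. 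Your single-scale annular picture is too coarse to capture this.
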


First we decompose $\mathcal{H}^{(2^{k\alpha})}$ into 
\[
\begin{split}
\mathcal{H}^{(2^{k\alpha})} f(x)&= \int_{|t| \leq 2^{-k}} f(x-t) e^{i 2^{k\alpha} |t|^{\alpha}} K(t)dt+ \int_{|t| > 2^{-k}}  f(x-t) e^{i 2^{k\alpha} |t|^{\alpha}} K(t)dt\\
& =: \mathcal{H}_{k, -\infty} f(x)+ \mathcal{H}_{k, \infty} f(x).
\end{split}
\]
In the term $\mathcal{H}_{k, -\infty}f$, we are integrating over small $t$, and the exponential $e^{i 2^{k\alpha} |t|^{\alpha}}$ is approximately $1$. This motivates us to further decompose $\mathcal{H}_{k,-\infty} f$, as
\begin{equation} \label{eq:Hk0_decomp}
\begin{split}
\mathcal{H}_{k,-\infty}f(x) &=\int_{|t| \leq 2^{-k}} f(x-t) K(t)dt+\int_{|t| \leq 2^{-k}} f(x-t) (e^{i 2^{k\alpha} |t|^{\alpha}}-1) K(t)dt\\
& =:\widetilde{\mathcal{H}}_{k,0} f(x) + \mathcal{H}_{k,0} f(x).
\end{split}
\end{equation}
For the other term, we decompose 
\[ 
\mathcal{H}_{k,\infty} f(x)=
\sum_{\ell = 1}^{\infty} \mathcal{H}_{k,\ell} f(x) :=\sum_{\ell = 1}^{\infty} \int_{2^{-k+\ell-1} < |t| \leq 2^{-k+\ell}} f(x-t) e^{i 2^{k\alpha} |t|^{\alpha}} K(t)dt.
\]
The former term in \eqref{eq:Hk0_decomp} is a truncated singular integration. We have: 
\begin{lem}[Campbell, Jones, Reinhold, Wierdl \cite{MR1953540}, Theorem A]
\[ \left\| \lambda \sqrt{ N_{\lambda} \{\widetilde{\mathcal{H}}_{k,0} f \colon k \in \Z\} } \right\|_{L^p(\R^n)} \lesssim \|f\|_{L^p(\R^n)}
\] for all $1 < p < \infty$.
\end{lem}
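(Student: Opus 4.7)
The plan is to reduce the estimate to the Jones--Seeger--Wright framework already developed in \eqref{eq:Sk_jump} and \eqref{eq:clSk_jump}. First, I write $\widetilde{\mathcal{H}}_{k,0} f = T f - H_k f$, where $T f(x) := \mathrm{p.v.}\int f(x-t) K(t)\,dt$ is the full Calder\'on--Zygmund operator (bounded on $L^p$ for $1<p<\infty$) and $H_k f(x) := \int_{|t|>2^{-k}} f(x-t) K(t)\,dt$ is the complementary truncation. Since $Tf$ does not depend on $k$, the jumps of the two families coincide. I then smooth the sharp cut-off $\mathbf{1}_{|t|>2^{-k}}$ to $1 - \eta(2^k t)$ for a radial $\eta \in C_c^\infty(\R^n)$ equal to $1$ on $\{|t|\le 1\}$ and $0$ on $\{|t|\ge 2\}$; the smoothing error is a convolution with a kernel supported in $\{|t|\sim 2^{-k}\}$ of size $\lesssim |t|^{-n}$, hence pointwise dominated by the Hardy--Littlewood maximal function $Mf$, so its jump function is also $\lesssim Mf$, which is $L^p$-bounded.

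Using the identity $H_k^{\mathrm{sm}} = T - \widetilde H_k$ with $\widetilde H_k f(x) := \int f(x-t)K(t)\eta(2^k t)\,dt$, and the $k$-independence of $Tf$, it remains to bound the jumps of $\widetilde H_k f$. By the homogeneity of $K$, $\widetilde H_k f = f * (K\eta)_k$ with $(K\eta)_k(t) = 2^{kn}(K\eta)(2^k t)$. Decomposing $K\eta = \sum_{j\ge 0}\Phi_j$ via a radial dyadic partition of unity (with $\Phi_j$ supported in $\{|s|\sim 2^{-j}\}$) and rescaling each piece to unit scale through $\Psi_j(s) := 2^{-jn}\Phi_j(2^{-j}s)$, I obtain $\widetilde H_k f = \sum_{j\ge 0}(\Psi_j)_{k+j}*f$, where $(\Psi_j)_m(t) := 2^{mn}\Psi_j(2^m t)$. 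Each $\Psi_j$ is a $C^\infty$ bump supported in $\{|s|\sim 1\}$ with $\int \Psi_j = 0$ (from the radiality of the partition together with $\int_{\mathbb S^{n-1}}\Omega\,d\sigma = 0$), and for $j\ge 2$ the bumps stabilize to a single Littlewood--Paley-type kernel $\Psi_\infty := K\chi$; only $j = 0,1$ contribute distinct boundary terms.

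The boundary terms $(\Psi_0)_k * f$ and $(\Psi_1)_{k+1} * f$ are each a single dyadic dilation family of a fixed mean-zero $C_c^\infty$ bump, to which a signed-kernel variant of Proposition~\ref{prop:Lepingle} (via positive/negative decomposition, or directly using the Fourier decay of $\widehat{\Psi_j}$) applies. The main contribution $\sum_{j\ge 2}(\Psi_\infty)_{k+j}*f = \sum_{\ell\ge k+2}(\Psi_\infty)_\ell *f$ is a partial Littlewood--Paley expansion; since $\sum_{\ell\in\Z}(\Psi_\infty)_\ell = K$ pointwise on $\R^n\setminus\{0\}$, this equals $Tf - \sum_{\ell<k+2}(\Psi_\infty)_\ell * f$. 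As $Tf$ is $k$-constant, it suffices to handle the jumps of the low-scale sum, which I further decompose as $S_k(Tf) + E_k f$, where $S_k$ is a standard smooth approximate identity (convolution with $2^{kn}S(2^k \cdot)$ for a compactly supported $S$ with $\widehat S(0) = 1$) and $E_k f$ is a residual whose Fourier multiplier is concentrated at $|\xi|\sim 2^k$ and decays like $\min(2^{-k}|\xi|, (2^{-k}|\xi|)^{-\delta})$ on the low and high side respectively. The first piece is bounded by \eqref{eq:Sk_jump} applied to $g := Tf \in L^p$, while the second admits an atomic decomposition $E_k f = \sum_{\ell\ge 0}c_\ell R_{k-\ell}*f$ with $|c_\ell|\lesssim 2^{-\alpha\ell}$ and $R$ a fixed Schwartz bump, to which \eqref{eq:clSk_jump} applies.

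The main obstacle is constructing the atomic decomposition of $E_k$. One must exploit the mean-zero cancellation of $\Psi_\infty$ inside the jump norm (not merely in $L^p$) to obtain the rapidly convergent expansion, parallel to the derivation of \eqref{eq:clSk_jump} from \eqref{eq:Sk_jump}. The Fourier-side analysis must distinguish the regimes $|\xi|\ll 2^k$ (where the mean-zero property gives a gain of $2^{-k}|\xi|$) and $|\xi|\gg 2^k$ (where the smoothness of $\Psi_\infty$ yields decay $(2^{-k}|\xi|)^{-\delta}$), and one must verify that the resulting atomic pieces $R_{k-\ell}$ satisfy the Fourier-decay hypothesis of Proposition~\ref{prop:Lepingle} uniformly in $\ell$.
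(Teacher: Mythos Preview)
The paper does not give its own proof of this lemma; it is simply quoted as Theorem~A of Campbell--Jones--Reinhold--Wierdl \cite{MR1953540} and used as a black box. So there is no paper proof to compare against, and your task was really to re-derive a known result.

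Your reduction $\widetilde{\mathcal H}_{k,0}f=Tf-H_kf$ and the subsequent dyadic decomposition of the smoothly truncated operator are sound and very much in the spirit of how such results are proved. There is, however, a genuine gap in the treatment of the smoothing error. You write that the error term is ``pointwise dominated by the Hardy--Littlewood maximal function $Mf$, so its jump function is also $\lesssim Mf$''. This implication is false: a uniform pointwise bound $|a_k|\le C$ puts no control on $N_\lambda\{a_k\}$ (take $a_k=(-1)^kC$). What saves the error term is not the size bound but the \emph{mean-zero} property of the error kernel $\rho_0(t)=K(t)\bigl(\eta(t)-\mathbf 1_{|t|\le1}\bigr)$, which follows from radiality of $\eta$ and $\int_{\mathbb S^{n-1}}\Omega\,d\sigma=0$. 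Since $\widehat{\rho_0}(0)=0$ and $\widehat{\rho_0}$ decays (like $|\xi|^{-(n+1)/2}$, from the surface singularity at $|t|=1$), the square function $\bigl(\sum_k|\rho_k*f|^2\bigr)^{1/2}$ is bounded on $L^p$, and the elementary inequality
\[
\lambda^2 N_\lambda\{\rho_k*f\}\le 4\sum_{k\in\Z}|\rho_k*f|^2
\]
(each index appears at most once as an $s_j$ and once as a $t_j$) then controls the jumps.

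The same observation lets you simplify the rest of the argument considerably and bypass the ``atomic decomposition of $E_k$'' that you flag as the main obstacle (and whose details you do not actually carry out). Once you have the smoothly truncated $\widetilde H_k f=\int f(x-t)K(t)\eta(2^kt)\,dt$, write directly $\widetilde H_k f = S_k(Tf)+\bigl(\widetilde H_k f-S_k(Tf)\bigr)$. The first piece is handled by \eqref{eq:Sk_jump} applied to $Tf\in L^p$. The second piece has Fourier multiplier $m(2^{-k}\xi)$ with $|m(\xi)|\lesssim\min(|\xi|,|\xi|^{-N})$ (check this using $\widehat{\Psi_\infty}(0)=0$ and rapid decay of $\widehat S$ and $\widehat{\Psi_\infty}$), so again the square function over $k$ is $L^p$-bounded and dominates the jump norm pointwise. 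This avoids the unjustified appeal to \eqref{eq:clSk_jump} with a kernel $R$ that need not satisfy the positivity and normalisation hypotheses of Proposition~\ref{prop:Lepingle}.
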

Hence it remains to estimate the jump norms of $\mathcal{H}_{k,0} f(x) + \sum_{\ell=1}^{\infty} \mathcal{H}_{k,\ell}f(x) = \sum_{\ell = 0}^{\infty} \mathcal{H}_{k,\ell} f(x)$. To do so, we carry out a Littlewood--Paley decomposition. For each $\ell \ge 0$, apply
\[ 
\mathcal{H}_{k,\ell} f = \mathcal{H}_{k,\ell} S_{k-\ell} f + \mathcal{H}_{k,\ell} (f - S_{k-\ell} f).
\]
(see Section~\ref{sect:prelim} for the precise definition of $S_k f$). Notice that $S_{k-\ell} f$ is approximately constant at the physical scale $2^{-k+\ell}$. Thus, $\mathcal{H}_{k,\ell} S_{k-\ell} f$ is almost just a multiple of $S_{k-\ell} f$. This motivates us to further decompose
\[
\mathcal{H}_{k,\ell} S_{k-\ell} f = c_{\ell} S_{k-\ell} f + \left(\mathcal{H}_{k,\ell} S_{k-\ell} f -  c_{\ell} S_{k-\ell} f \right)
\]
where
\begin{equation} \label{eq:c_alpha_ell_def}
c_0 := \int_{|t| \leq 1} (e^{i |t|^{\alpha}} - 1) K(t)dt \quad \text{ and } \quad c_{\ell} := \int_{\frac{1}{2} < |t| \leq 1} e^{i 2^{\ell \alpha} |t|^{\alpha}} K(t)dt \quad \text{ for }\ell \ge 1
\end{equation}
are constants. Here we choose the constants $c_0$ and $c_{\ell}$ as such because  $K$ is assumed to be homogeneous. Hence
\[
\sum_{\ell =0}^{\infty}\mathcal{H}_{k,\ell } f(x)=
\sum_{\ell = 0}^{\infty} c_{\ell} S_{k-\ell} f 
+ \sum_{\ell=0}^{\infty} \left(\mathcal{H}_{k,\ell} S_{k-\ell} f 
-  c_{\ell} S_{k-\ell} f \right) + \sum_{\ell=0}^{\infty} \mathcal{H}_{k,\ell} (f - S_{k-\ell} f).
\]
Since a simple integration-by-parts argument shows that $|c_{\ell}| = O(2^{-\alpha \ell})$, the contribution from the first term to the desired jump norm can be controlled using (\ref{eq:clSk_jump}). To handle the latter two terms we use a square function. It suffices to show that
\begin{equation} \label{eq:sqfcna}
\sum_{\ell = 0}^{\infty} \left \| \left ( \sum_{k \in \Z}  |\mathcal{H}_{k,\ell} S_{k-\ell} f 
-  c_{\ell} S_{k-\ell} f |^2 \right )^{1/2} \right \|_{L^p(\R^n)} \lesssim \|f\|_{L^p(\R^n)}
\end{equation}
and
\begin{equation} \label{eq:sqfcnb}
\sum_{\ell = 0}^{\infty} \left \| \left ( \sum_{k \in \Z} | \mathcal{H}_{k,\ell} (f - S_{k-\ell} f)|^2 \right )^{1/2} \right \|_{L^p(\R^n)} \lesssim \|f\|_{L^p(\R^n)}
\end{equation}
since the square functions dominate the desired jump norms pointwisely.
To establish these estimates we apply a finer frequency decomposition. Let 
\[
\Delta(x) := 2^n S(2x) - S(x) \quad \text{ and } \quad \Delta_k(x) := 2^{kn} \Delta(2^k x)
\]
and write $\Delta_k f := f*\Delta_k$ so that
\[
S_{k-\ell} f = \sum_{j=1}^{\infty} \Delta_{k-\ell-j} f \quad \text{ and } \quad f - S_{k-\ell} f = \sum_{j=0}^{\infty} \Delta_{k-\ell+j} f.
\]
By the triangle inequality, to prove \eqref{eq:sqfcna} and \eqref{eq:sqfcnb}, it suffices to prove the existence of some constant $\gamma > 0$, such that
\begin{equation} \label{eq:HklSkdiff_sq_fnc_Delta}
\left\| \left( \sum_{k \in \Z} \left|\mathcal{H}_{k,\ell} \Delta_{k-\ell-j} f - c_{\ell} \Delta_{k-\ell-j} f  \right|^2 \right)^{1/2} \right\|_{L^p(\R^n)} \lesssim 2^{-\gamma(j+\ell)} \|f\|_{L^p(\R^n)}
\end{equation}
and
\begin{equation} \label{eq:Hklfdiff_sq_fnc_Delta}
\left\| \left( \sum_{k \in \Z} \left| \mathcal{H}_{k,\ell} \Delta_{k-\ell+j} f \right|^2 \right)^{1/2} \right\|_{L^p(\R^n)} \lesssim 2^{-\gamma (j+\ell)} \|f\|_{L^p(\R^n)}
\end{equation}
for every $j, \ell \geq 0$ and every $1 < p < \infty$. Throughout the paper, we use $\gamma$ to denote a positive real number that might vary form line to line, if not otherwise stated.\\

Now each of these estimates \eqref{eq:HklSkdiff_sq_fnc_Delta} and \eqref{eq:Hklfdiff_sq_fnc_Delta} holds for $1 < p < \infty$ without the small factors on the right, since  
$|\mathcal{H}_{k,\ell} f| \lesssim M f$ where $M$ is the Hardy-Littlewood maximal operator on $\R^n$, allowing us to invoke the Fefferman-Stein vector-valued inequality for the maximal function \cite[Chapter II.1]{MR1232192}. Hence by real interpolation, it suffices to prove the case $p = 2$. To do so, fix $\alpha > 0$, $\alpha \ne 1$ and $\ell \in \N$. Let $m_{\ell}(\xi)$ be the multiplier defined by
\[
\begin{split}
m_{0}(\xi) &:= \int_{|t| \leq 1} (e^{i|t|^{\alpha}}-1) e^{-it \cdot \xi} K(t)dt \\
m_{\ell}(\xi) &:= \int_{\frac{1}{2} < |t| \leq 1} e^{i  |2^{\ell} t|^{\alpha}} e^{-it \cdot \xi} K(t)dt \text{ for } \ell \ge 1.
\end{split}
\]
Let $\tilde{m}_{\ell}(\xi)$ be the multiplier defined by
\[
\begin{split}
& \tilde{m}_{0}(\xi) := \int_{|t| \leq 1} (e^{i|t|^{\alpha}}-1) (e^{-it \cdot \xi}-1) K(t)dt\\ 
& \tilde{m}_{\ell}(\xi) := \int_{\frac{1}{2} < |t| \leq 1} e^{i |2^{\ell} t|^{\alpha}} (e^{-it \cdot \xi}-1) K(t)dt \quad \text{ for } \ell \ge 1.
\end{split}
\]
Since $K$ is assumed to be homogeneous, for $\ell \geq 0$ the multiplier for $\mathcal{H}_{k,\ell}$ is $m_{\ell}(2^{-k+\ell}\xi)$. It follows that for $\ell \geq 0$ the multiplier for $\mathcal{H}_{k,\ell} - c_{\ell}$ is $\tilde{m}(2^{-k+\ell}\xi)$. Then \eqref{eq:HklSkdiff_sq_fnc_Delta} and \eqref{eq:Hklfdiff_sq_fnc_Delta} with $p = 2$ follows from the following pointwise estimates for multipliers:
\begin{equation}\label{170713e3.12a}
\left( \sum_{k \in \Z} |\widehat{\Delta}(2^{-k+\ell+j} \xi) \tilde{m}_{\ell}(2^{-k+\ell} \xi)|^2 \right)^{\frac{1}{2}}+ \left( \sum_{k \in \Z} |\widehat{\Delta}(2^{-k+\ell-j} \xi) m_{\ell}(2^{-k+\ell} \xi)|^2 \right)^{\frac{1}{2}} \lesssim 2^{-\gamma ( \ell +j)}.
\end{equation}
We need the following lemma, which is a consequence of the van der Corput lemma (details omitted): 

\begin{lem} \label{lem:ml}
We have \begin{equation}\label{170713e3.13a}
|m_{\ell}(\xi)| \lesssim \min \{2^{-\gamma \ell}, 2^{\alpha \ell}|\xi|^{-\gamma}\} \quad \text{for all $\xi \in \R$}.
\end{equation}
In particular,
\begin{equation}\label{170713e3.14a}
|m_{\ell}(\xi)| \lesssim (2^{-\gamma \ell}\cdot 2^{\alpha \ell}|\xi|^{-\gamma})^{\frac{1}{2}} \quad \text{for all $\xi \in \R$}.
\end{equation}
We also have 
\[
|\tilde{m}_{\ell}(\xi)| \lesssim 
\begin{cases} 
\min\{2^{-\gamma \ell}, |\xi|\} \lesssim 2^{-\frac{\gamma \ell}{2}} |\xi|^{\frac{1}{2}} \quad & \text{for $|\xi| \leq 1$}, \\
 1 \quad & \text{for $|\xi| \geq 1$}.
\end{cases}
\]
\end{lem}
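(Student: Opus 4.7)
The plan is to establish the two pointwise estimates on $m_\ell$ by standard oscillatory-integral methods: van der Corput's lemma in the radial variable for the uniform bound $|m_\ell|\lesssim 2^{-\gamma\ell}$, and integration by parts in $t$ exploiting the oscillation $e^{-it\cdot\xi}$ for the decay $|m_\ell(\xi)|\lesssim 2^{\alpha\ell}|\xi|^{-\gamma}$. The geometric-mean bound \eqref{170713e3.14a} is then immediate, and the estimates on $\tilde m_\ell$ will follow from those on $m_\ell$ via the identity $\tilde m_\ell(\xi)=m_\ell(\xi)-c_\ell$ together with the elementary inequality $|e^{-it\cdot\xi}-1|\le \min(2,|t||\xi|)$.

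For the uniform bound I would pass to polar coordinates $t=r\omega$ on $(1/2,1]\times S^{n-1}$ (for $\ell\ge 1$; the case $\ell=0$ is trivial from $|e^{i|t|^\alpha}-1|\lesssim|t|^\alpha$), so that the radial phase is $\Phi_\omega(r)=2^{\ell\alpha}r^\alpha - r\,\omega\cdot\xi$. The key observation is that two radial derivatives kill the linear-in-$r$ dependence on $\xi$, leaving $\Phi''_\omega(r)=2^{\ell\alpha}\alpha(\alpha-1)r^{\alpha-2}$, which is bounded below by $c\,2^{\ell\alpha}$ on $[1/2,1]$ uniformly in $(\omega,\xi)$ because $\alpha>0$ and $\alpha\ne 1$. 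A direct application of van der Corput's lemma with $k=2$ then bounds the radial integral by $\lesssim 2^{-\ell\alpha/2}$, and integrating over $S^{n-1}$ delivers $|m_\ell(\xi)|\lesssim 2^{-\ell\alpha/2}$.

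For the decay in $\xi$ I would first reduce to $|\xi|\ge 1$ (the bound being trivial otherwise), replace the sharp radial cutoff by a smooth one to avoid boundary terms, and integrate by parts once using $e^{-it\cdot\xi}=|\xi|^{-2}i\,\xi\cdot\nabla_t e^{-it\cdot\xi}$. Differentiating the oscillating factor $e^{i2^{\ell\alpha}|t|^\alpha}$ loses at most a factor $2^{\ell\alpha}$ on the support, yielding $|m_\ell(\xi)|\lesssim 2^{\ell\alpha}|\xi|^{-1}$; interpolating this with the trivial $|m_\ell|\lesssim 1$ gives $|m_\ell(\xi)|\lesssim 2^{\ell\alpha}|\xi|^{-\gamma}$ for any $\gamma\in(0,1]$, as required.

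Finally, for $\tilde m_\ell$ I would use $\tilde m_\ell(\xi)=m_\ell(\xi)-c_\ell$ together with the easy estimate $|c_\ell|\lesssim 2^{-\alpha\ell}$ (the same van der Corput argument applied to $c_\ell=m_\ell(0)$) to deduce $|\tilde m_\ell(\xi)|\lesssim 2^{-\gamma\ell}$, hence $|\tilde m_\ell|\lesssim 1$ for $|\xi|\ge 1$. For $|\xi|\le 1$, the bound $|\tilde m_\ell|\lesssim|\xi|$ comes from inserting $|e^{-it\cdot\xi}-1|\le |t||\xi|$ directly into the defining integral (the singularity of $K$ at the origin when $\ell=0$ being controlled by the factor $|e^{i|t|^\alpha}-1|\lesssim|t|^\alpha$). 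The geometric mean then yields the final bound. The only delicate points are the smooth-cutoff replacement and the treatment of the origin for $\ell=0$; both are standard, and I expect no serious obstacle — which is presumably why the authors have suppressed the details.
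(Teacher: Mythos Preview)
Your approach is correct and matches what the paper has in mind: the authors simply write that the lemma ``is a consequence of the van der Corput lemma (details omitted)'', and your plan fills in exactly those details. One small correction: you cannot legitimately ``replace the sharp radial cutoff by a smooth one'', since $m_\ell$ is defined with the sharp annular cutoff $\{1/2<|t|\le 1\}$; however, integrating by parts directly on the annulus produces boundary terms on $\{|t|=1/2\}$ and $\{|t|=1\}$ that are already $O(|\xi|^{-1})$, so no smoothing is needed and the argument goes through unchanged.
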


We are ready to prove \eqref{170713e3.12a}. The estimate is invariant upon replacing $\xi$ by $2\xi$, hence we only need to prove it when $|\xi| \simeq 1$. First consider the first term on the left hand side of \eqref{170713e3.12a}. When $k\le 0$, we bound $|\tilde{m}_{\ell} (2^{-k+\ell}\xi)|\lesssim 1$ and $|\widehat{\Delta}(2^{-k+ \ell +j}\xi)|\lesssim 2^{-10(-k+\ell+j)}$. Summing over $k\le 0$, we obtain $2^{-10( \ell +j)}$.

When $k\ge 0$, we bound $|\tilde{m}_{\ell} (2^{-k+\ell}\xi)|\lesssim 2^{-\frac{\gamma \ell}{2}}2^{-\frac{k}{2}+\frac{\ell}{2}}$ and 
\[
|\widehat{\Delta}(2^{-k+\ell+j}\xi)| \lesssim
  \begin{cases} 
       2^{-10(-k+\ell +j)}   \hfill & \text{ if $0\le k\le \ell + j$} \\
       2^{-k+\ell + j} \hfill & \text{ if $k\ge \ell +j$} \\
  \end{cases}
\]
Summing over $k\ge 0$, we obtain $2^{-\gamma(\ell + j)}$ for some $\gamma>0$. This finishes the proof of the first half of \eqref{170713e3.12a}.\\

Next we turn to the second term on the left hand side of \eqref{170713e3.12a}. What we need to prove can also be written as 
\begin{equation}\label{170713e3.17a}
\left( \sum_{k \in \Z} |\widehat{\Delta}(2^{k} \xi) m_{\ell}(2^{k +j} \xi)|^2 \right)^{\frac{1}{2}} \lesssim 2^{-\gamma (\ell +j)} \text{ for } |\xi| \simeq 1.
\end{equation}
We work on two different cases. Let $C_{\alpha}>0$ be a sufficiently large constant. Assume that we are in the case $j\ge C_{\alpha } \ell $. We bound the left hand side of \eqref{170713e3.17a} by 
\[
\sum_{k\ge 0} 2^{-10 k} 2^{\alpha \ell} 2^{-\gamma k -\gamma j} + \sum_{k<0} 2^k (2^{\alpha \ell}\cdot 2^{-\gamma \ell} 2^{-\gamma k-\gamma j})^{1/2}  \lesssim 2^{-\gamma (\ell +j)}.
\]
Here for the case $k\ge 0$ we applied \eqref{170713e3.13a}, and for the case $k<0$ we applied \eqref{170713e3.14a}. 

Finally, we assume that $0\le j\le C_{\alpha} \ell$. We bound the left hand side of \eqref{170713e3.17a} by 
\[
\sum_{k\ge 0} 2^{-10 k} 2^{-\gamma \ell} + \sum_{k<0}2^k 2^{-\gamma \ell} \lesssim 2^{-\gamma (\ell + j)}.
\]
Here in both cases $k\ge 0$ and $k<0$ we applied \eqref{170713e3.13a}.

\section{Short jump estimates for large $p$}\label{section:short-jump-large}
 
We are now going to start the proof of Proposition \ref{main2}. Recall that by interpolation, we only need to establish Proposition \ref{main2} when $p \in (\frac{2n}{2n-1},\infty)$ and $r \in (2,\infty)$ (see discussion following Proposition~\ref{main2}).
In this section we will do so for all sufficiently large values of $p$. 
More precisely, let $\alpha > 1$, $\mathcal{H}^{(u)}$ be as in (\ref{eq:Hu_def}), and let $V^{r}_{ j}{\mathcal{H} f}(x)
= V^{r} \{\mathcal{H}^{(u)} f (x) \colon u \in [2^{j\alpha}, 2^{(j+1)\alpha}]\}$. We prove
\begin{equation} \label{eq:mainest_Sect4}
\left\| \left(\sum_{j\in \Z} |V_j^{r} (\mathcal{H}f)|^{r} \right)^{\frac{1}{r}}\right\|_p \lesssim \|f\|_p,
\end{equation}
whenever
\begin{equation} \label{eq:cond_n1}
p \in (2,\infty), \quad n = 1, \quad r \in (2, \infty) 
\end{equation}
or 
\begin{equation} \label{eq:cond_n2}
p \in \left(2+\frac{4}{n}, \infty \right), \quad n \geq 2, \quad r \in [2, \infty).
\end{equation}  
This proves Proposition~\ref{main2} when $n = 1$. In the next section, we extend \eqref{eq:mainest_Sect4} to all $p \in ( \frac{2n}{2n-1},\infty)$ when $n \geq 2$, $r \in [2,\infty)$. That would complete the proof of Proposition~\ref{main2} when $n \geq 2$.

\subsection{Main tool: A square function estimate for the semigroup $e^{it(-\Delta)^{\lambda/2}}$} The main input to our proof of \eqref{eq:mainest_Sect4} under conditions \eqref{eq:cond_n1} or \eqref{eq:cond_n2} is a square function estimate, due to Lee, Rogers and Seeger \cite{MR2946085}:

\begin{prop}[Lee, Rogers and Seeger \cite{MR2946085}]\label{LRSsquare}
\begin{enumerate}
\item Let $n = 1$, $p \in [2,\infty)$ and $\lambda>1$. Then for any compact time interval $I$, 
\[
\left\|\left(\int_{I} \left| \int_{\R}e^{ix\xi}\hat{f}(\xi)e^{it|\xi|^{\lambda}} d\xi\right|^2 dt\right)^{1/2}\right\|_{L^p(\R)} \lesssim \|f\|_{L^{p}(\R)}.
\]
\item Let $n\ge 2$, $p \in (\frac{2(n+2)}{n},\infty)$ and $\lambda >1$. Then for any compact time interval $I$, 
\[
\left\|\left(\int_{I} \left| \int_{\R^n}e^{ix \cdot \xi}\hat{f}(\xi)e^{it|\xi|^{\lambda}} d\xi\right|^2 dt\right)^{1/2}\right\|_{L^p(\R^n)} \lesssim \|f\|_{W^{\beta,p}(\R^n)},
\]
with $\frac{ \beta }{ \lambda }=n(\frac{1}{2}-\frac{1}{p})-\frac{1}{2}$.
\end{enumerate}
\end{prop}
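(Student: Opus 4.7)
The plan is to reduce the inequality to the unit frequency scale via Littlewood--Paley decomposition and rescaling, and then to prove the unit-scale estimate using restriction theory for the hypersurface $\{(\xi,|\xi|^{\lambda}):|\xi|\sim 1\}$.

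Decompose $\widehat f=\sum_{k\in\Z}\widehat{f_k}$ with $\widehat{f_k}$ supported in $\{|\xi|\sim 2^k\}$. The Littlewood--Paley square function in $x$ (combined with Minkowski's inequality to exchange the square function in $k$ with the $L^2$ norm in $t$) reduces the proposition to a uniform-in-$k$ estimate for each $f_k$. The scaling $(x,t,\xi)\mapsto (2^{-k}x,2^{-k\lambda}t,2^k\xi)$ converts the problem for $f_k$ into the same square function estimate for a function $g$ with $\widehat g$ supported in the unit annulus $\{|\eta|\sim 1\}$, but over the dilated time interval $2^{k\lambda}I$. For $n\geq 2$, the Sobolev loss factor $2^{k\beta}$ with $\beta/\lambda=n(1/2-1/p)-1/2$ is precisely chosen to match the dimensional factors coming from the rescaling in $x$ and from the length $|2^{k\lambda}I|^{1/2}$ of the enlarged time interval, while for $n=1$ the exponents are such that no Sobolev loss is needed.

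For $n=1$, after the change of variable $\tau=|\eta|^{\lambda}$ on each half-line $\{\eta\sim\pm 1\}$, the operator $E_{\lambda}g(x,t):=\int e^{ix\eta+it|\eta|^{\lambda}}\widehat g(\eta)\, d\eta$ becomes the $(x,t)$-inverse Fourier transform of a measure supported on a smooth graph $\{(\phi(\tau),\tau):\tau\sim 1\}$ with $\phi$ having bounded derivatives and a Jacobian factor bounded above and below. For $p=2$, Plancherel handles the estimate directly. For larger $p$, the plan is to use mixed-norm extension estimates for non-degenerate curves in $\R^2$ (in the spirit of Carleson--Sj\"olin), or equivalently a Rubio de Francia $L^p(\ell^2)$ inequality applied to a further decomposition of the unit annulus into intervals on which the curvature is essentially constant; interpolation with the trivial $L^2$ estimate fills out the range $p\in[2,\infty)$.

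For $n\geq 2$, the range $p>2(n+2)/n$ coincides precisely with the Stein--Tomas range for the hypersurface $\{(\eta,|\eta|^{\lambda}):|\eta|\sim 1\}$, whose principal curvatures are bounded away from zero uniformly in $\lambda>1$. The plan is to apply the bilinear restriction theorem of Tao (after a Whitney decomposition of the unit annulus into pairs of transversal caps), or equivalently the Bourgain--Demeter $\ell^2$ decoupling inequality on caps of the appropriate size; Plancherel on each cap then converts the resulting $\ell^2_{\mathrm{caps}}L^p_{x,t}$ estimate into the desired $L^p_x L^2_t$ bound. The main obstacle is this unit-scale estimate in the $n\geq 2$ case: the square function bound in the sharp Stein--Tomas range requires genuine input beyond Plancherel, and one must verify that the constants in the bilinear restriction or decoupling inequality remain uniform in $\lambda>1$, which follows from the uniform non-degeneracy of the curvature of the surface on the unit annulus.
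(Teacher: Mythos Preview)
The paper does not prove this proposition; it is quoted from Lee, Rogers and Seeger \cite{MR2946085} and used as a black box. So there is no in-paper argument to compare against, but your sketch can still be assessed.

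Your broad architecture (Littlewood--Paley in $x$, rescale each piece to unit frequency, then appeal to restriction/decoupling for the surface $\{(\eta,|\eta|^{\lambda}):|\eta|\sim 1\}$) is natural, and for $n\ge 2$ you have correctly identified $p>2(n+2)/n$ as the Stein--Tomas range for that surface. However, the reduction step contains a real gap.

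After rescaling $f_k$ to a unit-frequency function $g$, the time interval becomes $2^{k\lambda}I$, and you assert that one picks up only the factor $|2^{k\lambda}I|^{1/2}$. This would follow if one could cover $[0,T]$ by $\sim T$ unit intervals and apply a uniform unit-interval bound on each. But the unit-interval bound on $[j,j+1]$ is the unit-interval bound on $[0,1]$ applied to $U_j g$, and for unit-frequency $g$ the fixed-time propagator $U_j=e^{ij(-\Delta)^{\lambda/2}}$ has $L^p\to L^p$ operator norm $\sim j^{\,n(1/2-1/p)}$ for $p\ge 2$ (stationary phase on the kernel). Summing in $\ell^2$ over $j\le T$ therefore produces $T^{\,n(1/2-1/p)+1/2}$, not $T^{1/2}$. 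Tracing this back, your scheme yields the single-piece bound with $\beta/\lambda=n(1/2-1/p)$, which is exactly the \emph{fixed-time} (Miyachi) exponent, not the improved square-function exponent $n(1/2-1/p)-1/2$ in the proposition. The $1/2$ gain in the exponent is precisely the content of the square-function estimate and cannot be obtained by rescaling a unit-time bound; it requires genuine space-time input over the long interval.

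There is a second, independent issue. Even granting the correct single-piece estimate $\|U_t f_k\|_{L^p_xL^2_t(I)}\lesssim 2^{k\beta}\|f_k\|_{L^p}$, your recombination step does not close for $p>2$. Vector-valued Littlewood--Paley gives
\[
\|U_t f\|_{L^p_xL^2_t}\sim \Bigl\|\Bigl(\sum_k \|U_t f_k\|_{L^2_t}^2\Bigr)^{1/2}\Bigr\|_{L^p_x},
\]
and Minkowski for $p\ge 2$ bounds this by the $\ell^2_k L^p_x$ quantity $\bigl(\sum_k 2^{2k\beta}\|f_k\|_{L^p}^2\bigr)^{1/2}$. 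But $\|f\|_{W^{\beta,p}}$ is the $L^p_x\ell^2_k$ quantity $\|(\sum_k 2^{2k\beta}|f_k|^2)^{1/2}\|_{L^p}$, and for $p>2$ the inequality $\ell^2L^p\lesssim L^p\ell^2$ goes the wrong way. Since $\beta$ is sharp at the single-piece level there is no $\varepsilon$-slack to sum in $\ell^1$. (This is exactly why, when the present paper \emph{applies} Proposition~\ref{LRSsquare} in Section~\ref{section:short-jump-large}, it does not sum scalar single-piece bounds but instead invokes the vector-valued multiplier theorem of Proposition~\ref{prop:Seeger88}.) The proof in \cite{MR2946085} sidesteps both issues by working directly with the bilinear/Whitney machinery to produce square-function bounds, rather than deducing them from scalar single-frequency estimates.
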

We will apply the above estimates with $\lambda = \alpha' := \frac{\alpha}{\alpha-1}$ (remember $\alpha > 1$).
Recall that we are interested in the variation of $\mathcal{H}^{(u)} f(x)$ where $u$ is restricted to the range $[2^{j \alpha}, 2^{(j+1) \alpha}]$ for some $j \in \Z$. To estimate this, we decompose the kernel $e^{i u |t|^{\alpha}} K(t)$ into a part where oscillation plays no role, and a part where the oscillation becomes important. More precisely, for $\ell \in \Z$, let
\begin{equation} \label{eq:Hul} 
\mathcal{H}^{(u)}_{\ell} f(x) := \int_{\R^n} f(x-t) e^{i u |t|^{\alpha}} \varphi_{\ell}(t) K(t) dt,
\end{equation}
where $\varphi_{\ell}(t) = \varphi_0(2^{-\ell} t)$ and $\varphi_0$ is radial, smooth and compactly supported on an annulus $\{|t| \simeq 1\}$ so that for $t \ne 0$, $\sum_{\ell \in \Z} \varphi_{\ell}(t) = 1$. When $u \simeq 2^{j \alpha}$, $|t| \simeq 2^{\ell-j}$, the phase $e^{i u |t|^{\alpha}}$ in (\ref{eq:Hul}) is approximately 1 precisely when $\ell < 0$. Thus, it makes sense to decompose
\begin{equation} \label{eq:decompl}
\mathcal{H}^{(u)} f(x) = \sum_{\ell \in \Z} \mathcal{H}^{(u)}_{\ell-j} f(x)
\end{equation}
and expect that the terms $\ell < 0$ in the above sum are essentially non-oscillatory. 

It suffices to show that
\begin{equation} \label{eq:mainest}
\sum_{\ell \in \Z} \left\| \left(\sum_{j\in \Z} |V_j^{r} \mathcal{H}^{(u)}_{\ell-j} f |^{r} \right)^{\frac{1}{r}}\right\|_p \lesssim \|f\|_p.
\end{equation}
To do so, we introduce a Littlewood--Paley decomposition in the $x$ variable. Let $P_k$ be a multiplier operator defined by $\widehat{P_k f}(\xi) =   \psi  (2^{-k} \xi) \widehat{f}(\xi)$, where $  \psi  $ is a smooth function with compact support on the annulus $1/2 \leq |\xi| \leq 2$, so that for $\xi \ne 0$, $\sum_{k \in \Z}   \psi  (2^{-k} \xi) = 1$. We further decompose 
\begin{equation} \label{eq:decompk}
\mathcal{H}^{(u)}_{\ell-j} f(x) = \sum_{k \in \Z} \mathcal{H}^{(u)}_{\ell-j} P_{j+k} f(x).
\end{equation}
We will estimate
\begin{equation} \label{eq:mainest1}
\left\| \|V_j^{r} \mathcal{H}^{(u)}_{\ell-j} P_{j+k} f \|_{\ell^r_j} \right\|_p 
\end{equation}
for each $k, \ell \in \Z$, and sum the estimates at the end. (Hereafter, for compactness of notations, we write $\ell^r_j$ for the $\ell^r$ norm over all $j \in \Z$.)

\subsection{Estimates for $\ell \leq -\frac{k}{2(\alpha+1)}$: Bounding the $V^r_j$ norm by the $\dot{W}^{1,1}$ norm} \label{sect4.2}

First there are two simple estimates for (\ref{eq:mainest1}). One way to estimate (\ref{eq:mainest1}) is to bound the $V^{r}_j$ norm by the $V^1_j$ norm, which in turn is bounded by the $\dot{W}^{1,1}$ norm on the $u$ interval $[2^{j\alpha}, 2^{(j+1)\alpha}]$. We get 
\[
V^{r}_j \mathcal{H}^{(u)}_{\ell-j} P_{j+k} f(x) 
\lesssim \int_{|u| \simeq 2^{j\alpha}} 2^{(\ell - j)\alpha} \int_{|t| \simeq 2^{\ell-j}} |P_{j+k} f(x-t)| |K(t)| dt du
\lesssim 2^{\ell \alpha} M P_{j+k} f(x),
\]
where $M$ is the Hardy-Littlewood maximal function, so by the Fefferman-Stein inequality and Littlewood--Paley inequality, we have
\begin{equation} \label{est1}
\| \| V^{r}_j \mathcal{H}^{(u)}_{\ell-j} P_{j+k} f(x) \|_{\ell^{r}_j} \|_{L^p_x} 
\lesssim 2^{\ell \alpha} \|f\|_{L^p}, \quad 1 < p < \infty.
\end{equation} 

For the second simple estimate, recall that $\int_{|t|=R} K(t) d\sigma(t) = 0$ for all $R \in (0, \infty)$. Since $\varphi$ was chosen to be radial, we have
\[
\int_{\R^n}  e^{i u |t|^{\alpha}} \varphi_{\ell-j}(t) K(t) dt = 0.
\]
Thus, in computing $V^{r}_j \mathcal{H}^{(u)}_{\ell-j} P_{j+k} f(x)$, we could have instead computed the $V^{r}_j$ norm of 
\[
\mathcal{H}^{(u)}_{\ell-j} P_{j+k} f(x) - P_{j+k}f(x) \int_{\R^n}  e^{i u |t|^{\alpha}} \varphi_{\ell-j}(t) K(t) dt.
\]
This expression is equal to
\begin{align}
\int_{\R^n} [P_{j+k}f(x-t) - P_{j+k}f(x)] e^{iu |t|^{\alpha}} \varphi_{\ell-j}(t) K(t) dt \notag \end{align}
The variational norm of this expression is controlled by its $\dot{W}^{1,1}$ norm in the $u$ interval $[2^{j\alpha}, 2^{(j+1)\alpha}]$, which in turn is controlled by
\[
2^{j+k} 2^{\ell-j} 2^{\ell \alpha} M \tilde{P}_{j+k} f(x)
\]	
where $\tilde{P}_{j+k}$ is a variant of the Littlewood--Paley projection $P_{j+k}$, so arguing as before, we see that
\begin{equation} \label{est2}
\| \| V^{r}_j \mathcal{H}^{(u)}_{\ell-j} P_{j+k} f(x) \|_{\ell^{r}_j} \|_{L^p_x} \lesssim 2^{\ell+k} 2^{\ell \alpha} \|f\|_{L^p}, \quad 1 < p < \infty.
\end{equation} 

We can sum \eqref{est2} over all pairs $(k,\ell)$ with $\ell \leq -\frac{k}{2(\alpha+1)}$ and $k \leq 0$. We can also sum \eqref{est1} over all $(k,\ell)$ with $\ell \leq -\frac{k}{2(\alpha+1)}$ and $k \geq 0$. Thus, it remains to bound \eqref{eq:mainest1} when 
\begin{equation} \label{eq:klcaseremain}
\ell > -\frac{k}{2(\alpha+1)}
\end{equation} 
and sum over all such pairs of $(k,\ell)$.

\subsection{Estimates for $\ell > -\frac{k}{2(\alpha+1)}$: Division into 3 cases} \label{sect4.3}

First we look at $\mathcal{H}^{(u)}_{\ell-j} P_{j+k} f (x)$ in terms of its multiplier:
\[
\mathcal{H}^{(u)}_{\ell-j} P_{j+k} f (x) = \frac{1}{(2 \pi)^n} \int_{\R^n} \widehat{f}(\xi) \left(   \psi  (2^{-j-k} \xi) \int_{\R^n} e^{-i t \cdot \xi} e^{i u |t|^{\alpha}} \varphi_{\ell-j}(t) K(t) dt \right) e^{i x \cdot \xi} dx.
\] 
The multiplier is an oscillatory integral in $t$ with phase $\phi(t) = - t \cdot \xi + u |t|^{\alpha}$, which (assuming $|u| \simeq 2^{\alpha j}$ and $|\xi| \simeq 2^{j+k}$) has a critical point in the annulus $\{|t| \simeq 2^{\ell-j}\}$ if and only if $2^{k+\ell} \simeq 2^{\ell \alpha}$, that is, if and only if $k = \ell (\alpha-1) + O(1)$. In that case, using stationary phase (see, for example, \cite[Chapter VIII.5.7]{MR1232192} or \cite[Theorem 1.2.1]{MR1205579}), the multiplier can be written as
\begin{equation} \label{eq:multiplier_expansion}
\psi(2^{-j-k} \xi) \left( e^{i c_{\alpha} (2^{-j\alpha} u)^{-\frac{1}{\alpha-1}} (2^{-j} |\xi|)^{\alpha'}} a(2^{\ell} 2^{-j} \xi, 2^{\ell \alpha} 2^{-j\alpha} u) + e(2^{\ell} 2^{-j} \xi, 2^{\ell \alpha} 2^{-j\alpha} u) \right)
\end{equation} 
where $\alpha' = \frac{\alpha}{\alpha-1}$, $c_{\alpha} = \frac{\alpha-1}{\alpha^{\alpha'}}$, $a \in S^{-n/2}(\R^{n+1})$ and $e \in S^{-\infty}(\R^{n+1})$. 
If there were no critical points in the annulus $\{|t| \simeq 2^{\ell-j}\}$, then the multiplier is simply 
\begin{equation} \label{eq:multiplier_expansion2}
\psi(2^{-j-k} \xi)  e(2^{\ell} 2^{-j} \xi, 2^{\ell \alpha} 2^{-j\alpha} u).
\end{equation}  
(In the above, by $a \in S^{-n/2}(\R^{n+1})$ we mean
\[
|\partial_{\xi}^{\alpha'} \partial_u^{\alpha''} a(\xi,u)| \lesssim_{\alpha} (1+|\xi|+|u|)^{-n/2-|\alpha|}
\] 
for every multiindex $\alpha = (\alpha',\alpha'') \in \mathbb{Z}_{\geq 0}^{n+1}$, and by $e \in \in S^{-\infty}(\R^{n+1})$ we mean
\[
|\partial_{\xi}^{\alpha'} \partial_u^{\alpha''} e(\xi,u)| \lesssim_{N,\alpha} (1+|\xi|+|u|)^{-N-|\alpha|}
\] 
for any positive integers $N$ and any multiindex $\alpha$.)

The above motivates us to consider three cases separately (under our earlier standing assumption \eqref{eq:klcaseremain}): \\

\noindent{\textbf{Case 1.}} $\ell \geq 0$, $k = \ell (\alpha-1) + O(1)$;  \\ 
\noindent{\textbf{Case 2.}} $k > \ell (\alpha - 1) + C$ for some $C > 0$; \\ 
\noindent{\textbf{Case 3.}} $k < \ell (\alpha - 1) - C$ for some $C > 0$. \\

\subsection{Estimates in Case 1} \label{sect4.4}
Now we consider Case 1. Our goal is to bound \eqref{eq:mainest1} given $k$ and $\ell$ as in Case 1. We proceed in a few steps.

\subsubsection{Application of Plancherel--P\'{o}lya}

First we will essentially show that if $r \in [2,\infty)$, then
\begin{equation} \label{eq:Lplqlq}
\left\| \|V_j^{r} \mathcal{H}^{(u)}_{\ell-j} P_{j+k} f \|_{\ell^r_j} \right\|_{L^p_x}  \lesssim 
2^{\frac{\ell \alpha}{q}}  \| \| \| \chi(u) \mathcal{H}^{(2^{j \alpha} u)}_{\ell-j} P_{j+k} f (x) \|_{L^q_u} \|_{\ell^q_j} \|_{L^p_x}
\end{equation}
for any $q \in [2,r]$ and any $p \in [1,\infty]$; here $\chi(u)$ is a smooth function with compact support on $[1/2,2^{\alpha+1}]$ that is identically equal to 1 on $[1,2^{\alpha}]$. Indeed, when $n \geq 2$ (and $p, r$ are as in (\ref{eq:cond_n2})), we will only need (\ref{eq:Lplqlq}) for $q = 2$. But for $n = 1$ (and $p, r$ as in (\ref{eq:cond_n1})), we will need (\ref{eq:Lplqlq}) for both $q = 2$ and $q = r$. We will see that this is the case after we prove (\ref{eq:Lplqlq}).

To prove (\ref{eq:Lplqlq}), let us temporarily write $g = P_{j+k} f$. As a function of $u$, $\mathcal{H}^{(u)}_{\ell-j} g$ has frequency morally supported on the annulus of size $\simeq 2^{(\ell-j)\alpha}$ centered at the origin. Thus, we introduce Littlewood--Paley projections in the $u$ variable (denoted by $P^{(2)}$ so that $P^{(2)}_{(\ell-j) \alpha}$ is projection onto frequency $\simeq 2^{(\ell-j) \alpha}$) and estimate
\begin{equation} \label{lemma1est}
\begin{split}
&|V^r_j \mathcal{H}^{(u)}_{\ell-j} g(x)| \\
\leq & |V^r (P^{(2)}_{\leq (\ell-j)\alpha} [\chi(2^{-j\alpha} u) \mathcal{H}^{(u)}_{\ell-j} g(x)])| + \sum_{k = 1}^{\infty} |V^r (P^{(2)}_{(\ell-j+k)\alpha} [\chi(2^{-j\alpha} u) \mathcal{H}^{(u)}_{\ell-j} g(x)])|.
\end{split}
\end{equation}
(Here $P^{(2)}_{\leq (\ell-j)\alpha} := \sum_{k \leq \ell-j} P^{(2)}_{k \alpha}$.) 

The first term on the right hand side of (\ref{lemma1est}) is the main term, and can be estimated using Proposition~\ref{PPinequality}. In particular, it is bounded by
\[
2^{\frac{(\ell-j) \alpha}{q}} \| \chi(2^{-j\alpha} u) \mathcal{H}^{(u)}_{\ell-j} g(x) \|_{L^q_u}
\]
(recall $q \in [2,r]$).
By changing variable in $u$, this is just
\[
2^{\frac{\ell \alpha}{q}} \| \chi(u) \mathcal{H}^{(2^{j \alpha} u)}_{\ell-j} g(x) \|_{L^q_u}.
\]
Hence the contribution of the first term of \eqref{lemma1est} to the left hand side of \eqref{eq:Lplqlq} is bounded by
\[ 
2^{\frac{\ell \alpha}{q}} \| \| \| \chi(u) \mathcal{H}^{(2^{j \alpha} u)}_{\ell-j} P_{j+k} f(x) \|_{L^q_u} \|_{\ell^r_j} \|_{L^p_x}.
\]
Since $r \geq q$, we have $\ell^{r}$ norm bounded by $\ell^q$ norm, hence the above is bounded by the right hand side of \eqref{eq:Lplqlq}. 

On the other hand, for the second term on the right hand side of (\ref{lemma1est}), since $k > -C$, one can integrate by parts in $u$, using the fact that the multiplier for $P^{(2)}_{(\ell-j+k)\alpha}$ vanishes to infinite order at 0, and obtain
\begin{equation} \label{eq:errorgain}
|P^{(2)}_{(\ell-j+k)\alpha} [\chi(2^{-j\alpha} u) \mathcal{H}^{(u)}_{\ell-j} g(x)]|
\lesssim_N 2^{-k\alpha N} \tilde{P^{(2)}}_{(\ell-j+k)\alpha} [\chi(2^{-j\alpha} u) \tilde{\mathcal{H}}^{(u)}_{\ell-j} g(x)]
\end{equation}
for any positive integer $N$, where $\tilde{P^{(2)}}$ is a Littlewood--Paley projection similar to $P^{(2)}$, and $\tilde{\mathcal{H}}^{(u)}_{\ell-j}$ is the same as $\mathcal{H}^{(u)}_{\ell-j}$ defined in (\ref{eq:Hul}), except that the cutoff $\varphi$ is replaced by a smooth multiple $\tilde{\varphi}$ of $\varphi$. Hence by repeating the above argument, and summing over $k$ using the additional convergence factors $2^{-k\alpha N}$ that we gained in \eqref{eq:errorgain}, the contribution of the second term of \eqref{lemma1est} to the left hand side of \eqref{eq:Lplqlq} is bounded by
\begin{equation} \label{eq:Lplqlq_B}
2^{\frac{\ell \alpha}{q}}  \| \| \| \chi(u) \tilde{\mathcal{H}}^{(2^{j \alpha} u)}_{\ell-j} P_{j+k} f(x) \|_{L^q_u} \|_{\ell^q_j} \|_{L^p_x}.
\end{equation}
Since $\tilde{\mathcal{H}}$ and $\mathcal{H}$ satisfies the same estimates, we will not distinguish the two, and declare that we can also bound \eqref{eq:Lplqlq_B} once we can bound the right hand side of \eqref{eq:Lplqlq}.

\subsubsection{Application of the square function estimate}

Now fix $k,\ell$ as in Case 1. In other words, fix $k,\ell \geq 0$ with $k = \ell(\alpha-1) + O(1)$. We will try to bound the right hand side of (\ref{eq:Lplqlq}) when $q = 2$. The multiplier for $\mathcal{H}^{(2^{j \alpha} u)}_{\ell-j} P_{j+k} f$ is given by \eqref{eq:multiplier_expansion} with $u$ replaced by $2^{j\alpha}u$. For $u \in \R$, let $\tilde{m}_u(\xi)$ be the multiplier  
\begin{equation} \label{eq:tildemu_def}
\tilde{m}_u(\xi) = \chi(u) \psi(2^{-k} \xi) \left( e^{i c_{\alpha} u^{-\frac{1}{\alpha-1}} |\xi|^{\alpha'}} a(2^{\ell} \xi, 2^{\ell \alpha} u) + 
e(2^{\ell} \xi, 2^{\ell \alpha} u) \right)
\end{equation}
where $a \in S^{-n/2}(\R^{n+1})$, $e \in S^{-\infty}(\R^{n+1})$ are as in \eqref{eq:multiplier_expansion}. Then the multiplier of the operator $\chi(u) \mathcal{H}^{(2^{j \alpha} u)}_{\ell-j} P_{j+k}$ is precisely $\tilde{m}_u(2^{-j} \xi)$.
Now expand $\chi(u) a(2^{\ell} \xi, 2^{\ell \alpha} u)$ in Fourier series in $u$: let $c$ be a small enough constant depending on $\alpha$ so that the support of $\chi(u)$ is contained in $[0,c^{-1}]$. Using the smoothness in the variable $u$, we get
\[
\chi(u) a(2^{\ell} \xi, 2^{\ell \alpha} u)
= \sum_{\kappa \in c \Z} (1+|\kappa|)^{-2} a_{\kappa}(2^{\ell} \xi) e^{i \kappa u}
\]
for $u \in [0,c^{-1}]$, where $a_{\kappa} \in S^{-n/2}(\R^n)$ uniformly for every $\kappa \in c \Z$. Similarly, expand $\chi(u) e(2^{\ell} \xi, 2^{\ell \alpha} u)$ in Fourier series in $u$:
\[
\chi(u) e(2^{\ell} \xi, 2^{\ell \alpha} u)
= \sum_{\kappa \in c \Z} (1+|\kappa|)^{-2} e_{\kappa}(2^{\ell} \xi) e^{i \kappa u}
\]
for $u \in [0,c^{-1}]$, where $e_{\kappa} \in S^{-\infty}(\R^n)$ uniformly for every $\kappa \in c \Z$.
This shows 
\[ 
\tilde{m}_u(\xi) = \sum_{\kappa \in c \Z} (1+|\kappa|)^{-2} e^{i \kappa u} \psi(2^{-k} \xi) \left(  a_{\kappa}(2^{\ell} \xi) e^{i c_{\alpha} u^{-\frac{1}{\alpha-1}} |\xi|^{\alpha'}} + e_{\kappa}(2^{\ell} \xi) \right)
\]
for $u \in [0,c^{-1}]$. Temporarily let $g$ be the function such that $\widehat{g}(\xi) = \widehat{f}(\xi) \psi(2^{-k} \xi) a_{\kappa}(2^{\ell} \xi)$; note that when $k \geq 0$, $\|g\|_{L^p_{\beta}(\R^n)} \lesssim 2^{k \beta} 2^{-(k+\ell) n/2} \|f\|_{L^p(\R^n)}$ by the H\"ormander--Mikhlin multiplier theorem, with an implicit constant independent of $\kappa$. This is further bounded by $2^{\ell (\alpha-1) \beta} 2^{-\ell \alpha n/2} \|f\|_{L^p(\R^n)}$ since we are in Case 1, where $k = \ell(\alpha-1) + O(1)$. We apply Proposition~\ref{LRSsquare} with $g$ in place of $f$ and obtain
\begin{equation} \label{eq:square_fcn_Sect4}
\begin{split}
& \left\|  \left\|  \int_{\R^n} e^{i x \cdot \xi} \widehat{f}(\xi) \psi(2^{-k} \xi) a_{\kappa}(2^{\ell} \xi) e^{i c_{\alpha} u^{-\frac{1}{\alpha-1}} |\xi|^{\alpha'}} d\xi \right\|_{L^2_u [0,c^{-1}]} \right\|_{L^p(\R^n)} \\
& \lesssim 
\begin{cases}
2^{-\ell \alpha/2} \|f\|_{L^p(\R)} &\quad \text{if $p \in [2,\infty)$ and $n = 1$} \\
2^{\ell \alpha \left[n \left( \frac{1}{2}-\frac{1}{p} \right) - \frac{1}{2} \right]} 2^{-\ell \alpha n/2} \|f\|_{L^p(\R^n)} 
&\quad \text{if $p \in ( 2+\frac{4}{n}, \infty) $ and $n \geq 2$}.
\end{cases}
\end{split}
\end{equation}
We get a better decay if $a_{\kappa}(2^{\ell} \xi) e^{i c_{\alpha} u^{-\frac{1}{\alpha-1}} |\xi|^{\alpha'}}$ above is replaced by $e_{\kappa}(2^{\ell} \xi)$. Summing over $\kappa$, and simplifying the exponent in case $n \geq 2$, we get
\[ 
\begin{split}
& \left\|  \left\|  \int_{\R^n} e^{i x \cdot \xi} \widehat{f}(\xi) \tilde{m}_u(\xi) d\xi \right\|_{L^2_u} \right\|_{L^p(\R^n)} \lesssim 
\begin{cases}
2^{-\ell \alpha/2} \|f\|_{L^p(\R)} &\quad \text{if $p \in [2,\infty)$ and $n = 1$} \\
2^{-\ell \alpha/2} 2^{-\ell \alpha n/p}  \|f\|_{L^p(\R^n)}
&\quad \text{if $p \in ( 2+\frac{4}{n}, \infty)$ and $n \geq 2$}.
\end{cases}
\end{split}
\]
But recall that the multiplier of the operator $\chi(u) \mathcal{H}^{(2^{j \alpha} u)}_{\ell-j} P_{j+k}$ is precisely $\tilde{m}_u(2^{-j} \xi)$. By scale invariance, we have
\begin{equation} \label{eq:square_fcn_3}
\begin{split}
& \left\|  \left\|  \chi(u) \mathcal{H}^{(2^{j \alpha} u)}_{\ell-j} P_{j+k} f \right\|_{L^2_u} \right\|_{L^p(\R^n)}  \lesssim 
\begin{cases}
2^{-\ell \alpha/2} \|f\|_{L^p(\R)} &\quad \text{if $p \in [2,\infty)$ and $n = 1$} \\
2^{-\ell \alpha/2} 2^{-\ell \alpha n/p}  \|f\|_{L^p(\R^n)} 
&\quad \text{if $p \in ( 2+\frac{4}{n}, \infty)$ and $n \geq 2$} 
\end{cases}
\end{split}
\end{equation}
for all $j \in \Z$, where the implicit constants are independent of $j$. (The Fourier series expansion used to remove the dependence on $u$ are very reminiscent of the method used to prove $L^2$ boundedness of multipliers in $S^0$; see, for example, \cite[Chapter VI.2]{MR1232192}.)

Recall that our goal now is to bound the right hand side of (\ref{eq:Lplqlq}) when $q = 2$. Hence we need a vector-valued version of (\ref{eq:square_fcn_3}), where we will have an additional $\ell^2$ norm over $j \in \Z$ inside the $L^p$ norm on the left hand side of (\ref{eq:square_fcn_3}). To do so, we need a vector-valued variant of a theorem of Seeger, about multipliers with localized bounds. This will be done in the next subsection.

\subsubsection{Application of Seeger's theorem for multipliers with localized bounds} 

First we state a vector-valued variant of a theorem of Seeger, about multipliers with localized bounds:

\begin{prop} [Jones, Seeger, Wright \cite{MR2434308}, Seeger \cite{MR955772}] \label{prop:Seeger88}
Let $I \subset \R$ be a compact interval. Let $\{\tilde{m}_u(\xi) \colon u \in I \}$ be a family of Fourier multipliers on $\R^n$, each of which is compactly supported on $\{\xi \colon 1/2 \leq |\xi| \leq 2\}$, and satisfies
\[
\sup_{u \in I} |\partial_{\xi}^{\tau} \tilde{m}_u(\xi)| \leq B \quad \text{for each $0 \leq |\tau| \leq n+1$}
\]
for some constant $B$. For $u \in I$ and $j \in \Z$, write $T_{u,j}$ the multiplier operator with multiplier $\tilde{m}_u(2^{-j} \xi)$.
Fix some $p \in [2, \infty)$. Assume that there exists some constant $A$ such that
\begin{equation}  \label{eq:assump_1piece}
\sup_{j \in \Z} \left\| \| T_{u,j} f \|_{L^2(I)} \right\|_{L^s(\R^n)} \leq A \|f\|_{L^s(\R^n)} 
\end{equation}
for both $s = p$ and $s = 2$.
Then 
\[
\left\| \| \|T_{u,j} f\|_{L^2(I)} \|_{\ell^2(\Z)} \right\|_{L^p(\R^n)} \lesssim A \left| \log \left( 2 + \frac{B}{A} \right) \right|^{\frac{1}{2}-\frac{1}{p}} \|f\|_{L^p(\R^n)}.
\]
\end{prop}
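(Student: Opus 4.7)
The plan is to prove the proposition as a vector-valued generalization of Seeger's multiplier theorem \cite{MR955772}, interpolating between two endpoint estimates: an $L^2$ bound with no logarithmic loss and a (weak) $L^p$ bound with loss $\sqrt{L}$, where $L := \log(2 + B/A)$. Marcinkiewicz interpolation between these two produces the claimed loss $L^{1/2 - 1/p}$.

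\emph{Endpoint (i): $L^2$ bound.} Setting $T_j f(x, u) := T_{u,j}f(x)$ and applying Plancherel, the $s=2$ case of the assumption \eqref{eq:assump_1piece} is equivalent to $\int_I |\tilde m_u(2^{-j}\xi)|^2 \, du \leq A^2$ for a.e.\ $\xi$. Since each $\tilde m_u$ is supported in the annulus $\{1/2 \leq |\xi| \leq 2\}$, only $O(1)$ values of $j$ contribute non-trivially for each fixed $\xi$; hence $\sum_j \int_I |\tilde m_u(2^{-j}\xi)|^2 \, du$ is bounded by a multiple of $A^2$. Plancherel then yields
\[
\Bigl\|\Bigl(\sum_{j \in \Z} \|T_{u,j}f\|_{L^2(I)}^2\Bigr)^{1/2}\Bigr\|_{L^2(\R^n)} \lesssim A \|f\|_{L^2(\R^n)},
\]
with no logarithmic loss.

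\emph{Endpoint (ii): (weak) $L^p$ bound with $\sqrt{L}$ loss.} The derivative bound $\sup_u |\partial_\xi^\tau \tilde m_u| \leq B$ together with integration by parts gives spatial decay for the $L^2(I)$-valued convolution kernel $K_{u,j}$ of $T_{u,j}$, of the schematic form $\|K_{u,j}(x)\|_{L^2(I)} \lesssim A\, 2^{jn} (1 + 2^j |x|)^{-(n+1)} (B/A)^\theta$ for a mild $\theta > 0$. Given $f$ and a level $\alpha > 0$, apply a Calder\'on--Zygmund decomposition $f = g + \sum_Q b_Q$ at level $\alpha$. The good part $g$ is controlled by endpoint (i) together with the standard inequality $\|g\|_2^2 \lesssim \alpha^{2-p} \|f\|_p^p$. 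For the bad part, I would split $T_{u,j} b_Q$ according to whether $j$ is larger or smaller than $-\log_2 \mathrm{diam}(Q)$: the large-$j$ (small-scale) contributions are controlled using the $s=p$ hypothesis and an almost-orthogonality estimate summing the $\ell^2_j$ norm, while the small-$j$ (large-scale) contributions are controlled via the kernel decay and the mean-zero property of $b_Q$, which localizes them (up to rapidly decaying tails) to a sufficiently large dilate of $Q$ whose magnification factor grows in $L$. Summing over $Q$ and $j$ and exploiting the Hilbert-space structure of the target produces a weak-$(p,p)$ bound with constant $A\sqrt{L}$.

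\emph{Main obstacle.} The heart of the matter is endpoint (ii): arranging the Calder\'on--Zygmund argument so that the interaction between the scale index $j$ and the CZ-cubes $Q$ contributes only $\sqrt{L}$ rather than a full $L$ loss. This depends on an $L^2$-type orthogonality between scales that is already central to the scalar Seeger argument. In the vector-valued setting it carries over because the target $L^2(I) \otimes \ell^2(\Z)$ is Hilbertian, but the book-keeping of cross-terms must be arranged carefully, and the truncation radius at which one cuts off the kernel $K_{u,j}$ must be chosen as a power of $L$ in order to balance the near and far contributions of the bad part.
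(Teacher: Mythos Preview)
Your proposal has a genuine gap in the interpolation scheme, and the Calder\'on--Zygmund argument as sketched cannot work for $p>2$.

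\textbf{The interpolation does not reach $L^p$.} Marcinkiewicz interpolation between a strong $L^2$ bound and a weak-$(p,p)$ bound produces strong $L^q$ bounds only for $2<q<p$, never $q=p$ itself. Even if your weak-$(p,p)$ estimate were upgraded to a strong one, at exponent $p$ you would simply recover the constant $A\sqrt{L}$, not $AL^{1/2-1/p}$. To obtain the exponent $\tfrac12-\tfrac1p$ the second endpoint must lie at $L^\infty$ (or BMO): Riesz--Thorin between $L^2$ with constant $C$ and $L^\infty$ with constant $CL^{1/2}$ gives $L^p$ with constant $CL^{(1-2/p)/2}=CL^{1/2-1/p}$.

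\textbf{The good-part estimate fails for $p>2$.} Your claimed inequality $\|g\|_2^2\lesssim\alpha^{2-p}\|f\|_p^p$ is the standard CZ ingredient for $p\le 2$ (from $|g|\lesssim\alpha$ and $|g|^2\le\alpha^{2-p}|g|^p$), but for $p>2$ the pointwise inequality reverses and the bound is false in general. This is why CZ arguments for $p>2$ typically proceed via duality or the sharp function rather than by decomposing $f$ directly.

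The paper's proof takes the latter route. One forms the Fefferman--Stein sharp function of the $\ell^2(\Z;L^2(I))$-valued map $Tf=(T_{u,j}f)_{u,j}$, fixes an integer parameter $N$, and splits $(Tf)^\sharp(x)\lesssim\sigma_1+\sigma_2$ according to whether the scale $j$ lies within $N$ of $-\log_2(\text{sidelength of }Q_x)$ or not. Each piece is interpolated between $L^2$ and $L^\infty$: for $\sigma_1$ the $L^\infty$ bound carries $N^{1/2}$ (only $O(N)$ scales contribute to the $\ell^2$ sum), yielding $N^{1/2-1/p}$ at $L^p$; for $\sigma_2$ one obtains $A+B2^{-N}$ uniformly, using the kernel decay from the derivative bounds and, crucially, the $s=2$ hypothesis in \eqref{eq:assump_1piece}. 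Choosing $N\simeq\log(2+B/A)$ balances the two terms. The $s=2$ assumption is needed precisely because Seeger's original $L^\infty$ treatment of $\sigma_2$ used an $L^p$--$L^{p'}$ duality that is unavailable in the vector-valued setting.
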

This proposition was stated without proof on p.6737 of Jones, Seeger and Wright \cite{MR2434308}. It is a vector-valued analog of Theorem 1 of Seeger \cite{MR955772}, and we provide a proof of this proposition in Section \ref{180320section7} for the convenience of the reader. 

Recall that our goal is to bound the right hand side of (\ref{eq:Lplqlq}) when $q = 2$. Also recall that if $\tilde{m}_u(\xi)$ is defined as in (\ref{eq:tildemu_def}), and $T_{u,j}$ is the multiplier operator with multiplier $\tilde{m}_u(2^{-j}\xi)$ as in Proposition~\ref{prop:Seeger88}, then $T_{u,j}f$ is precisely $\chi(u) \mathcal{H}^{(2^{j \alpha} u)}_{\ell-j} P_{j+k} f$. Thus, if we could apply Proposition~\ref{prop:Seeger88}, we would obtain a bound about the right hand side of (\ref{eq:Lplqlq}) when $q = 2$. To do so we verify the hypothesis of Proposition~\ref{prop:Seeger88}. From the explicit expression (\ref{eq:tildemu_def}), we have $$\sup_{u \in I} |\partial_{\xi}^{\tau} \tilde{m}_u(\xi)| \lesssim 2^{\ell N}$$ for some large positive integer $N$, if $|\tau| \leq n+1$. The hypothesis (\ref{eq:assump_1piece}) for $s = p$ is given by (\ref{eq:square_fcn_3}), where $A$ can be chosen to be relatively small if $\ell$ is large. On the other hand, by considering the $L^{\infty}$ norm of the multipliers, we also get
\begin{equation} \label{eq:L2_mult_bound}
\left\|  \left\|  \chi(u) \mathcal{H}^{(2^{j \alpha} u)}_{\ell-j} P_{j+k} f \right\|_{L^2_u} \right\|_{L^2(\R^n)} \lesssim 
2^{-\ell \alpha n/2} \|f\|_{L^2(\R^n)} \quad \text{for all $n \geq 1$},
\end{equation}
which gives us the hypothesis (\ref{eq:assump_1piece}) for $s = 2$, where $A$ can be chosen to be relatively small if $\ell$ is large.

More precisely, suppose first $n \geq 2$ and $p \in (2+\frac{4}{n},\infty)$. Then we invoke (\ref{eq:square_fcn_3}) and (\ref{eq:L2_mult_bound}). Since $2^{-\ell \alpha n/2} \leq 2^{-\ell \alpha/2} 2^{-\ell \alpha n/p}$, we may apply Proposition~\ref{prop:Seeger88} with $A = 2^{-\ell \alpha/2} 2^{-\ell \alpha n/p} $, and $B = 2^{\ell N}$ for some large positive integer $N$ depending only on $\alpha$. Thus, if $n \geq 2$ and $p \in (2+\frac{4}{n},\infty)$, then we get
\[
\| \| \| \chi(u) \mathcal{H}^{(2^{j \alpha} u)}_{\ell-j} P_{j+k} f (x) \|_{L^2_u} \|_{\ell^2_j} \|_{L^p_x} \lesssim_{\varepsilon} 
2^{-\ell \alpha/2} 2^{-\ell \alpha n/p}  2^{\ell \varepsilon} \|f\|_{L^p(\R^n)} 
\]
for any $\varepsilon > 0$. Taking $q = 2$ in \eqref{eq:Lplqlq}, this shows that
\[
\left\| \|V_j^{r} \mathcal{H}^{(u)}_{\ell-j} P_{j+k} f \|_{\ell^r_j} \right\|_p  \lesssim_{\varepsilon} 2^{-\ell \alpha n/p} 2^{\ell \varepsilon} \|f\|_{L^p(\R^n)} \quad \text{if $n \geq 2$, $p \in (2+\frac{4}{n},\infty)$ and $r \in [2, \infty)$.}
\]
Note that the power of $2$ here is negative. So this estimate can be summed over all $\ell \geq 0$, and this gives the desired bound for \eqref{eq:mainest1} when $n \geq 2$, $p \in (2+\frac{4}{n},\infty)$ and $r \in [2,\infty)$ for $k, \ell$ as in Case 1.

On the other hand, if $n = 1$ and $p \in [2,\infty)$, then in light of (\ref{eq:square_fcn_3}) and (\ref{eq:L2_mult_bound}), we may apply Proposition~\ref{prop:Seeger88} with $A = 2^{-\ell \alpha/2}$ and $B = 2^{\ell N}$ for some large positive integer $N$ depending on $\alpha$. We obtain
\[ 
\| \| \| \chi(u) \mathcal{H}^{(2^{j \alpha} u)}_{\ell-j} P_{j+k} f (x) \|_{L^2_u} \|_{\ell^2_j} \|_{L^p_x} \lesssim_{\varepsilon}  2^{-\ell \alpha/2} 2^{\ell \varepsilon} \|f\|_{L^p(\R)} \quad \text{if $n = 1$ and $p  \in [2,\infty)$}
\]
for any $\varepsilon > 0$. Taking $q = 2$ in \eqref{eq:Lplqlq}, this shows that
\begin{equation} \label{eq:squarefcn1}
\left\| \|V_j^{r} \mathcal{H}^{(u)}_{\ell-j} P_{j+k} f \|_{\ell^r_j} \right\|_p \lesssim_{\varepsilon} 2^{\ell \varepsilon} \|f\|_{L^p(\R)} \quad \text{if $n = 1$, $p \in [2,\infty)$ and $r \in [2,\infty)$}.
\end{equation}
This is not good enough to be summed over all $\ell \geq 0$, so we need to gain a slightly better decay in $\ell$. This is achieved via the local smoothing estimate in Theorem \ref{180320thm1.6}.

\subsubsection{Application of a local smoothing estimate in dimension $n = 1$}
The goal of this subsection is to prove that 
\begin{equation} \label{eq:localsmooth2}
\left\| \|V_j^{r} \mathcal{H}^{(u)}_{\ell-j} P_{j+k} f \|_{\ell^r_j} \right\|_p  \lesssim 2^{-\ell \alpha/p} \|f\|_{L^p(\R)} \quad \text{if $n = 1$, $p = r \in (4,\infty)$}.
\end{equation}
Assume for the moment that this has been established. Interpolating \eqref{eq:localsmooth2} against \eqref{eq:squarefcn1} using complex interpolation of vector-valued $L^p$ spaces (see \cite[Theorem 5.1.2]{BL76}), we get
\begin{equation} \label{eq:eachpiece}
\begin{split}
&\left\| \|V_j^{r} \mathcal{H}^{(u)}_{\ell-j} P_{j+k} f \|_{\ell^r_j} \right\|_p  \lesssim 2^{-\gamma \ell} \|f\|_{L^p(\R)} \quad \text{if $n = 1$, $p \in (2,\infty)$, $r \in (2,\infty)$}
\end{split}
\end{equation}
where $\gamma = \gamma(p,r)$ is a positive constant. This can be summed over all $\ell > 0$, and this gives the desired bound for \eqref{eq:mainest1} when $n = 1$, $p \in (2,\infty)$ and $r \in (2,\infty)$ for $k, \ell$ as in Case 1.

To prove (\ref{eq:localsmooth2}) we use the local smoothing estimate in Theorem~\ref{180320thm1.6}. Suppose $n = 1$, $p = r \in (4,\infty)$. We use (\ref{eq:Lplqlq}) with $q = r = p$. Thus, the left hand side of (\ref{eq:localsmooth2}) is bounded up to a constant by
\begin{equation} \label{eq:0711}
2^{\frac{\ell \alpha}{p}} \| \| \| \chi(u) \mathcal{H}^{(2^{j \alpha} u)}_{\ell-j} P_{j+k} f (x)  \|_{L^p_u} \|_{\ell^p_j} \|_{L^p_x}.
\end{equation}
Consider first
\[
\begin{split}
&\| \| \int_{\R^n}  e^{i x \cdot \xi} \widehat{f}(\xi)  \tilde{m}_u(\xi)  d\xi  \|_{L^p_u}  \|_{L^p_x} \\
&=\| \| \chi(u) \int_{\R^n} e^{i x \cdot \xi} \widehat{f}(\xi) \psi(2^{-k} \xi) \left( e^{i c_{\alpha} u^{-\frac{1}{\alpha-1}} |\xi|^{\alpha'}} a(2^{\ell} \xi, 2^{\ell \alpha} u) + 
e(2^{\ell} \xi, 2^{\ell \alpha} u) \right) d\xi \ \|_{L^p_u}  \|_{L^p_x}.
\end{split}
\]
We first use Fubini's theorem to interchange the integrals in $u$ and $x$, and use H\"ormander--Mikhlin multiplier theorem (for each fixed $u$) to get rid of the multiplier $a(2^{\ell} \xi, 2^{\ell \alpha} u)$. Since $k = \ell(\alpha-1)+O(1)$, this gives
\[
\begin{split}
&\| \| \int_{\R^n}  e^{i x \cdot \xi} \widehat{f}(\xi)  \tilde{m}_u(\xi)  d\xi  \|_{L^p_u}  \|_{L^p_x} \\
\lesssim & 2^{-\ell \alpha / 2} \| \| \chi(u) \int_{\R^n}  e^{i x \cdot \xi} \widehat{f}(\xi) \psi(2^{-k} \xi) e^{i c_{\alpha} u^{-\frac{1}{\alpha-1}} |\xi|^{\alpha'}} d\xi \ \|_{L^p_u}  \|_{L^p_x} + 2^{-\ell N} \|f\|_{L^p(\R)}
\end{split}
\]
for any positive integer $N$.
Thus, Theorem \ref{180320thm1.6} applies, and when $k \geq 0$ we have
\[ 
\begin{split}
&  \| \| \int_{\R^n}  e^{i x \cdot \xi} \widehat{f}(\xi)  \tilde{m}_u(\xi)  d\xi  \|_{L^p_u}  \|_{L^p_x} \lesssim  2^{-\ell \alpha /2} 2^{k \alpha' \left[ \left(\frac{1}{2} - \frac{1}{p} \right) - \frac{1}{p} \right]} \|f\|_{L^p(\R)}  \quad \text{if $n = 1$, $p \in (4,\infty)$}.
\end{split}
\]
But recall that the multiplier of the operator $\chi(u) \mathcal{H}^{(2^{j \alpha} u)}_{\ell-j} P_{j+k}$ is precisely $\tilde{m}_u(2^{-j} \xi)$. By scale invariance, and remembering that $k = \ell(\alpha-1) + O(1)$, we have
\[ 
 \| \| \chi(u) \mathcal{H}^{(2^{j \alpha} u)}_{\ell-j} P_{j+k} f (x)  \|_{L^p_u}  \|_{L^p_x} \lesssim 2^{- 2\ell \alpha/p} \|f\|_{L^p(\R)} \quad \text{if $n = 1$, $p \in (4,\infty)$}.
\]
Replacing $f$ by $\tilde{P}_{j+k} f$, taking $\ell^p_j$ norm on both sides, and using Littlewood--Paley inequality (remember $p \geq 2$), we get 
\[ 
\| \| \| \chi(u) \mathcal{H}^{(2^{j \alpha} u)}_{\ell-j} P_{j+k} f (x)  \|_{L^p_u}  \|_{\ell^p_j} \|_{L^p_x} \lesssim 2^{- 2\ell \alpha/p} \|f\|_{L^p(\R)} \quad \text{if $n = 1$, $p \in (4,\infty)$}.
\]
Thus, (\ref{eq:0711}) is $\lesssim 2^{- \ell \alpha/p} \|f\|_{L^p(\R)}$. This establishes (\ref{eq:localsmooth2}), and our treatment for Case~1 is now complete.

\subsection{Estimates in Cases 2 and 3: Further gains over Case 1} \label{sect4.5}
Next we estimate \eqref{eq:mainest1} for $k, \ell$ as in Case 2. Fix $k, \ell$ such that $k > \ell (\alpha - 1) + C$ for some positive constant~$C$. If $C$ is large enough, then the multiplier for $\mathcal{H}^{(2^{j \alpha} u)}_{\ell-j} P_{j+k} f$ is given by \eqref{eq:multiplier_expansion2}, since the phase function of the oscillatory integral defining the multiplier has no critical point in that case. For $u \in \R$, let $\tilde{m}_u(\xi)$ be the multiplier  
\[
\tilde{m}_u(\xi) = \chi(u) \psi(2^{-k} \xi) e(2^{\ell} \xi, 2^{\ell \alpha} u)
\]
where $e \in S^{-\infty}(\R^{n+1})$ is as in \eqref{eq:multiplier_expansion2}. Then the multiplier of the operator $\chi(u) \mathcal{H}^{(2^{j \alpha} u)}_{\ell-j} P_{j+k}$ is precisely $\tilde{m}_u(2^{-j} \xi)$. For every $N \in \N$ we can write
\[
\tilde{m}_u(\xi) = 2^{-(k+\ell) N} \chi(u) \psi(2^{-k} \xi) \tilde{e}_N(2^{\ell} \xi, 2^{\ell \alpha} u)
\]
for some symbol $\tilde{e}_N \in S^{-\infty}(\R^{n+1})$. Thus, applying Proposition~\ref{prop:Seeger88} as in the proof of \eqref{eq:squarefcn1}, we get
\[ 
\left\| \|V_j^{r} \mathcal{H}^{(u)}_{\ell-j} P_{j+k} f \|_{\ell^r_j} \right\|_p \lesssim_N 2^{-(k+\ell) N} \|f\|_{L^p(\R^n)}
\]
whenever one of the following two conditions is fulfilled: $n = 1$, $p \in [2,\infty)$ and $r \in [2,\infty)$, or $n \geq 2$, $p \in (2 + \frac{4}{n},\infty)$, and $r \in [2,\infty)$. The right hand side in the above display equation can be summed over all $k, \ell$ that satisfies $k > \ell (\alpha -1)+C$ and the standing assumption \eqref{eq:klcaseremain}, and this gives the bound for \eqref{eq:mainest1} for such $p, n, r$ for all $k, \ell$ as in Case 2.\\

Finally we estimate \eqref{eq:mainest1} for $k, \ell$ as in Case 3. Fix $k, \ell$ such that $k < \ell (\alpha - 1) - C$ for some positive constant $C$. As in Case 2, if $C$ is large enough, then the multiplier for $\mathcal{H}^{(2^{j \alpha} u)}_{\ell-j} P_{j+k} f$ is given by \eqref{eq:multiplier_expansion2}. For $u \in \R$, let $\tilde{m}_u(\xi)$ be the multiplier  
\[
\tilde{m}_u(\xi) = \chi(u) \psi(2^{-k} \xi) e(2^{\ell} \xi, 2^{\ell \alpha} u)
\]
where $e \in S^{-\infty}(\R^{n+1})$ is as in \eqref{eq:multiplier_expansion2}. Then the multiplier of the operator $\chi(u) \mathcal{H}^{(2^{j \alpha} u)}_{\ell-j} P_{j+k}$ is precisely $\tilde{m}_u(2^{-j} \xi)$. For every $N \in \N$ we can write
\[
\tilde{m}_u(\xi) = (2^{-\ell \alpha} u)^N \chi(u) \psi(2^{-k} \xi) \tilde{e}_N(2^{\ell} \xi, 2^{\ell \alpha} u)
\]
for some symbol $\tilde{e}_N \in S^{-\infty}(\R^{n+1})$. Thus, applying Proposition~\ref{prop:Seeger88} as in the proof of \eqref{eq:squarefcn1}, we get
\[ 
\left\| \|V_j^{r} \mathcal{H}^{(u)}_{\ell-j} P_{j+k} f \|_{\ell^r_j} \right\|_p \lesssim_N 2^{-\ell \alpha N} \|f\|_{L^p(\R^n)}
\]
whenever one of the following two conditions is fulfilled: $n = 1$, $p \in [2,\infty)$ and $r \in [2,\infty)$, or $n \geq 2$, $p \in (2 + \frac{4}{n}, \infty)$, and $r \in [2,\infty)$. The right hand side in the above display equation can be summed over all $k, \ell$ that satisfies $k < \ell (\alpha -1) - C$ and the standing assumption \eqref{eq:klcaseremain}. This gives the bound for \eqref{eq:mainest1} for such $p, n, r$ for all $k, \ell$ as in Case 3.

We have thus completed the proof of \eqref{eq:mainest_Sect4} for all $p, n, r$ satisfying (\ref{eq:cond_n1}) or (\ref{eq:cond_n2}).

\section{Short jump estimates for $p\le 2$}\label{section:short-jump-small}

In this section, we establish 
\begin{equation} \label{eq:mainest_Sect5}
\left\| \left(\sum_{j\in \Z} |V_j^{r} (\mathcal{H}f)|^{r} \right)^{\frac{1}{r}}\right\|_p \lesssim \|f\|_p,
\end{equation}
whenever $n \geq 2$, $\frac{2n}{2n-1} < p \leq 2$, and $r \in [2, \infty)$. By complex interpolation (see \cite[Theorem 5.1.2]{BL76}) with \eqref{eq:mainest_Sect4}, we will then have \eqref{eq:mainest_Sect5} whenever $n \geq 2$, $p \in (\frac{2n}{2n-1}, \infty)$, and $r \in [2, \infty)$ which concludes the proof of Proposition \ref{main2}.

The key here is the following square function estimate. 
\begin{prop}\label{170713prop5.1}
Let $n\ge 2$, $1<p\le 2$ and $\lambda>1$. Then for any compact time interval $I$, 
\[
\left\|\left(\int_{I} \left| \int_{\R^n}e^{ix\xi}\hat{f}(\xi)e^{it|\xi|^{\lambda}} d\xi\right|^2 dt\right)^{1/2}\right\|_{L^p(\R^n)} \lesssim \|f\|_{W^{\beta,p}(\R^n)},
\]
with $\frac{\beta}{\lambda}=n(\frac{1}{p}-\frac{1}{2})$.
\end{prop}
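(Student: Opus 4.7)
Write $S_t = e^{it(-\Delta)^{\lambda/2}}$, so that the left-hand side of the inequality is $\|S_t f\|_{L^p_x L^2_t(I)}$. The proof reduces, by Minkowski's inequality, to a fixed-time oscillatory multiplier bound of Miyachi--Hirschman type.

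\textbf{Step 1 (Minkowski reduction).} Since $p \le 2$, Minkowski's integral inequality permits us to interchange the $L^p_x$ and $L^2_t$ norms:
\[
\left\|\left(\int_I |S_t f|^2 dt\right)^{1/2}\right\|_{L^p(\R^n)} \le \left(\int_I \|S_t f\|_{L^p(\R^n)}^2 dt\right)^{1/2} \le |I|^{1/2} \sup_{t \in I} \|S_t f\|_{L^p(\R^n)}.
\]
It therefore suffices to prove the fixed-time estimate
\[
\|S_t f\|_{L^p(\R^n)} \lesssim \|f\|_{W^{\beta, p}(\R^n)}, \qquad \beta = \lambda n\Big(\tfrac{1}{p} - \tfrac{1}{2}\Big),
\]
uniformly in $t$ in the compact interval $I$.

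\textbf{Step 2 (Fixed-time multiplier bound).} This is the classical theorem of Miyachi and Hirschman on oscillatory Fourier multipliers: for $1 < p < \infty$ and $\lambda > 0$, $\lambda \ne 1$, the multiplier $e^{i|\xi|^{\lambda}}(1+|\xi|^2)^{-\beta/2}$ is bounded on $L^p(\R^n)$ if and only if $\beta \ge \lambda n |\tfrac{1}{p}-\tfrac{1}{2}|$, and by the compactness of $I$ and $\lambda > 1$ the same bound holds uniformly for $t \in I$. For a self-contained proof, take a Littlewood--Paley decomposition $f = \sum_{k \ge 0} P_k f$ with $\widehat{P_k f}(\xi) = \psi(2^{-k}\xi)\widehat{f}(\xi)$. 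The convolution kernel
\[
K_{k,t}(x) = \int_{\R^n} e^{i(x\cdot\xi + t|\xi|^{\lambda})} \psi(2^{-k}\xi)\, d\xi
\]
has, by stationary phase in the variable $\eta = 2^{-k}\xi$ (the phase $2^k x\cdot\eta + t\, 2^{k\lambda}|\eta|^{\lambda}$ has a unique nondegenerate critical point in the support of $\psi$ precisely for $|x| \sim |t|^{1/(\lambda-1)} 2^{k(\lambda-1)}$, with Hessian determinant of size $(t\, 2^{k\lambda})^n$), the uniform-in-$t$ estimates
\[
\|K_{k,t}\|_{L^{\infty}(\R^n)} \lesssim 2^{kn(1-\lambda/2)}, \qquad \|K_{k,t}\|_{L^{1}(\R^n)} \lesssim 2^{kn\lambda/2},
\]
where the $L^1$ bound uses that $K_{k,t}$ is effectively supported in the shell $|x| \sim 2^{k(\lambda-1)}$ of volume $\sim 2^{kn(\lambda-1)}$. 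Together with Plancherel's $L^2 \to L^2$ bound of size $1$, Riesz--Thorin interpolation gives
\[
\|S_t P_k\|_{L^p \to L^p} \lesssim 2^{k \lambda n(1/p - 1/2)} = 2^{k\beta}, \qquad 1 < p \le 2,
\]
uniformly in $t \in I$. Summing dyadically (via the $L^p$-boundedness of $(I-\Delta)^{\beta/2}$ composed with a Mikhlin multiplier on each Littlewood--Paley piece) yields the claimed $W^{\beta,p} \to L^p$ bound for $S_t$.

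\textbf{Main obstacle.} There is no real obstacle here: the condition $p \le 2$ makes Minkowski's inequality free, and the remaining fixed-time bound is the sharp Miyachi--Hirschman estimate, proved by a routine stationary phase plus interpolation argument. The only place requiring a little care is the final summation over frequencies $k$, which is handled by a standard appeal to Littlewood--Paley theory (exactly as in the proof of the Hörmander--Mikhlin multiplier theorem).
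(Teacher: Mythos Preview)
The Minkowski step in your argument goes in the wrong direction. The generalized Minkowski inequality states that
\[
\|F\|_{L^p_x L^q_t} \le \|F\|_{L^q_t L^p_x} \quad \text{when } p \ge q,
\]
so the bound $\|S_t f\|_{L^p_x L^2_t(I)} \le \|S_t f\|_{L^2_t(I) L^p_x}$ that you invoke is valid for $p \ge 2$, not for $p \le 2$. In the range $1 < p < 2$ the inequality is reversed, and one cannot control the mixed norm $L^p_x(L^2_t)$ by the fixed-time $L^p_x$ norms alone. (A two-point counterexample: with $p=1$, take $F(x,t) = \mathbf{1}_{\{x=t\}}$ on $\{1,2\}\times\{1,2\}$; then $\|F\|_{L^1_x L^2_t} = 2 > \sqrt{2} = \|F\|_{L^2_t L^1_x}$.) This is exactly why the paper does \emph{not} reduce to the scalar Miyachi estimate---which it explicitly cites as background---but instead establishes a vector-valued endpoint $H^1(\R^n) \to L^1_x(L^2_t(I))$ bound by an atomic decomposition argument, and then obtains the full range $1<p\le 2$ by complex interpolation with the trivial $L^2$ case.

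As a secondary remark, even if Step~1 were valid, the ``self-contained'' sketch of the Miyachi theorem has its own gap at the summation over $k$: the rescaled multipliers $e^{it|\xi|^{\lambda}}(1+|\xi|^2)^{-\beta/2}\psi(2^{-k}\xi)$ are \emph{not} H\"ormander--Mikhlin multipliers uniformly in $k$, because differentiating $e^{it|\xi|^{\lambda}}$ produces factors of size $|\xi|^{\lambda-1}$, so each $\xi$-derivative loses $2^{k(\lambda-1)}$ rather than $2^{-k}$. Summing the per-piece bounds therefore cannot be done by Mikhlin-type considerations; the endpoint $\beta = \lambda n|1/p - 1/2|$ genuinely requires the Hardy-space or Calder\'on--Zygmund machinery of Miyachi and Fefferman--Stein. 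But this point is moot given the failure of Step~1.
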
 
The proof of this proposition is postponed to the end of this section.

Now let $\frac{2n}{2n-1} < p \leq 2$, $n \geq 2$, and $r \in [2, \infty)$. We proceed to establish \eqref{eq:mainest_Sect5}. As in Section~\ref{section:short-jump-large}, we decompose $\mathcal{H}^{(u)} f$ as in \eqref{eq:decompl} and \eqref{eq:decompk}, and estimate \eqref{eq:mainest1} for every $k, \ell \in \Z$. The inequalities \eqref{est1} and \eqref{est2} still hold under our current assumptions of $p, n, r$, and these estimates can be summed whenever $\ell \leq -\frac{k}{2(\alpha + 1)}$. Thus, it remains to consider pairs of $(k,\ell)$ for which \eqref{eq:klcaseremain} holds, and we still divide into Cases 1, 2, 3 as before. We will only treat Case 1 here which is the main case; an easy adaptation of this argument gives Cases 2 and 3.

So let $\ell \geq 0$ and $k = \ell(\alpha-1) + O(1)$. By \eqref{eq:Lplqlq} with $q = 2$, the left hand side of \eqref{eq:mainest_Sect5} is bounded by 
\begin{equation} \label{eq:Lpl2l2}
2^{\frac{\ell \alpha}{2}}  \| \| \| \chi(u) \mathcal{H}^{(2^{j \alpha} u)}_{\ell-j} P_{j+k} f (x) \|_{L^2_u} \|_{\ell^2_j} \|_{L^p_x}.
\end{equation}
We analyze the multiplier of $\mathcal{H}^{(2^{j \alpha} u)}_{\ell-j} P_{j+k}$ as before, but in \eqref{eq:square_fcn_Sect4} we use Proposition~\ref{170713prop5.1} instead of Proposition~\ref{LRSsquare} (since now $p \in (\frac{2n}{2n-1},2)$). So instead of \eqref{eq:square_fcn_3}, we get
\begin{equation} \label{eq:square_fcn_4}
\left\|  \left\|  \chi(u) \mathcal{H}^{(2^{j \alpha} u)}_{\ell-j} P_{j+k} f \right\|_{L^2_u} \right\|_{L^p(\R^n)}  \lesssim 
2^{\ell \alpha n \left( \frac{1}{p} - \frac{1}{2} \right)} 2^{-\ell \alpha \frac{n}{2}}  \|f\|_{L^p(\R^n)} = 2^{-\ell \alpha n / p'} \|f\|_{L^p(\R^n)}
\end{equation}
uniformly in $j \in \Z$.
Thus, we apply Proposition~\ref{prop:Seeger88} with $A = 2^{-\ell \alpha n / p'}$ and $B = 2^{\ell N}$ for some large integer $N$ depending only on $\alpha$. This gives 
\[ 
\left\| \left\| \left\|  \chi(u) \mathcal{H}^{(2^{j \alpha} u)}_{\ell-j} P_{j+k} f \right\|_{L^2_u} \right\|_{\ell^2_j} \right\|_{L^p(\R^n)}  \lesssim_{\varepsilon} 
 2^{-\ell \alpha n / p'} 2^{\ell \varepsilon} \|f\|_{L^p(\R^n)}
\]
for all $\varepsilon > 0$. Continuing from \eqref{eq:Lpl2l2}, we see that the left hand side of \eqref{eq:mainest_Sect5} is bounded by 
\[
2^{\ell \alpha/2} 2^{-\ell \alpha n / p'} 2^{\ell \varepsilon} \|f\|_{L^p(\R^n)}.
\]
Since $p \in (\frac{2n}{2n-1},\infty)$, the above exponent of $2$ is negative if $\varepsilon$ is sufficiently small. Thus, we can sum over all $\ell \geq 0$ in this case, establishing the bound for \eqref{eq:mainest1} for all $k, \ell$ in Case 1. A similar argument establishes a bound for \eqref{eq:mainest1} for all $k, \ell$ in Cases 2 and 3. This completes the proof of \eqref{eq:mainest_Sect5}, modulo the proof of Proposition \ref{170713prop5.1}.

\begin{proof}[Proof of Proposition \ref{170713prop5.1}]

We will prove a slightly more general result. Let us write
\[ T_u f(x) = \int_{\R^n} e^{ix\xi} \widehat{f}(\xi) m_u(\xi) d\xi,\]
where
\[m_u(\xi)=e^{iu|\xi|^{\lambda}} (1+|\xi|^2)^{-(\beta+i\gamma)/2}.\]
\begin{thm} \label{thm5.2}
Let $I$ be a compact interval not containing $0$. If $\lambda>1$, $\beta=n\lambda/2$ and $\gamma\in\R$, then 
\[ \Big\| \Big( \int_I |T_u f(x)|^2 du \Big)^{1/2} \Big\|_{L^1(\R^n)} \lesssim \|f\|_{H^1(\R^n)},\]
that is, $T$ maps the Hardy space $H^1(\R^n)$ boundedly into $L_x^1(\R^n; L_u^2(I))$.
\end{thm}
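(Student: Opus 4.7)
My plan is to interpret $\{T_u\}_{u\in I}$ as a single Calder\'on--Zygmund operator with $L^2_u(I)$-valued convolution kernel
\[
\mathcal{K}(x)(u) = K(x,u) = \int_{\R^n} e^{ix\xi + iu|\xi|^\lambda}(1+|\xi|^2)^{-(\beta+i\gamma)/2}d\xi,
\]
and apply the vector-valued Fefferman--Stein $H^1 \to L^1$ theory. Two ingredients are needed: (i) an $L^2(\R^n) \to L^2(\R^n;L^2_u(I))$ bound, and (ii) an $L^2_u(I)$-valued H\"ormander smoothness condition on $\mathcal{K}$, both with at most polynomial dependence on $|\gamma|$ (so that Stein complex interpolation with the trivial $L^2 \to L^2$ bound at $\beta=0$ recovers Proposition \ref{170713prop5.1}).

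The $L^2 \to L^2_x(L^2_u)$ bound is immediate from Plancherel and Fubini, since $|m_u(\xi)| = (1+|\xi|^2)^{-\beta/2} \le 1$ (as $\beta \ge 0$) uniformly in $u$ and $\gamma$, yielding $\|\mathcal{K} * f\|_{L^2_x L^2_u} \le |I|^{1/2} \|f\|_2$. For the $H^1 \to L^1$ estimate I would use atomic decomposition: for an $H^1$-atom $a$ supported in $B = B(x_0, r)$ with $\|a\|_\infty \le |B|^{-1}$ and $\int a = 0$, one must bound $\|\mathcal{K} * a\|_{L^1_x L^2_u(I)} \lesim 1$. On $2B$, Cauchy--Schwarz and the $L^2$ bound give the estimate; on the complement, writing $T_u a(x) = \int_B [K(x-y,u) - K(x-x_0,u)] a(y) dy$ using the mean-zero condition and applying Minkowski's inequality reduces matters to the vector-valued H\"ormander estimate
\[
\sup_{|z| \le r,\, r > 0} \int_{|x| > 2r} \|K(x-z,\cdot) - K(x,\cdot)\|_{L^2_u(I)} \, dx \lesim 1.
\]

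To prove this, I would Littlewood--Paley decompose $K = \sum_{k \ge 0} K_k$ with $K_k$ localized to frequency $|\xi| \sim 2^k$, and estimate each piece. Since $0 \notin I$, we may assume $|u| \sim 1$. Stationary phase for the phase $\phi(\xi) = x\xi + u|\xi|^\lambda$ on $\{|\xi| \sim 2^k\}$ shows that critical points exist in the support of $\phi_k$ precisely when $|x| \sim 2^{k(\lambda-1)}$, giving at the endpoint $\beta = n\lambda/2$ the pointwise bound $|K_k(x,u)| \lesim 2^{-kn(\lambda-1)}$ on this ``wave-front region'' and rapid decay for $|x| \gg 2^{k(\lambda-1)}$. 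Sharper control of $\|K_k(x,\cdot)\|_{L^2_u(\R)}$ comes from Plancherel in $u$: the Fourier transform of $u \mapsto K_k(x,u)$ is supported in $|\tau| \sim 2^{k\lambda}$ and, after polar coordinates in $\xi$, reduces to a spherical average $\int_{S^{n-1}} e^{ix \tau^{1/\lambda} \omega} \phi_k(\tau^{1/\lambda}\omega) \, d\sigma(\omega)$ whose Bessel asymptotics supply an extra $(|x| 2^k)^{-(n-1)/2}$ decay when $|x| 2^k \gg 1$. Combined with the elementary bound $|e^{-iz\xi}-1| \le \min(|z||\xi|, 2)$, these give per-piece H\"ormander estimates of the form
\[
\int_{|x| > 2r} \|K_k(x-z,\cdot) - K_k(x,\cdot)\|_{L^2_u(I)} \, dx \lesim \min(r 2^k,\, 1) \cdot (1+|\gamma|)^C,
\]
from which the full H\"ormander estimate follows after summing in $k$.

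The main obstacle is the regime $r 2^k \gg 1$: at the critical exponent $\beta = n\lambda/2$, each $K_k$ already has $L^1_x$-mass of order unity when taken in the $L^2_u$ norm (being essentially concentrated on an annulus of radius $2^{k(\lambda-1)}$ and height $2^{-kn(\lambda-1)}$), so a naive triangle inequality in $k$ fails to close. A small $k$-gain must be squeezed out by combining the oscillation in $u$ (via Plancherel), the spherical Bessel cancellation in $x$, and the restriction of the integration to $\{|x| > 2r\}$ (using the pointwise rapid decay of $K_k$ outside the wave front). This is the vector-valued adaptation of the classical Miyachi--Fefferman proof of $H^1 \to L^1$ boundedness for oscillatory multipliers of the form $e^{i|\xi|^\lambda}(1+|\xi|^2)^{-n\lambda/4}$; tracking the $|\gamma|$-dependence polynomially throughout is routine and is what makes the complex interpolation in the proof of Proposition \ref{170713prop5.1} go through.
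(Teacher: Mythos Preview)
Your framework is right up to the decisive regime, and then the plan breaks down. The atomic reduction, the Littlewood--Paley decomposition of the kernel, the stationary-phase localization of $K_k$ to the wave-front annulus $|x|\sim 2^{k(\lambda-1)}$, and the mean-value argument yielding $\lesssim r2^k$ when $r2^k\lesssim 1$ are all correct and match the paper's proof (its estimate \eqref{eqn:prop5.1.est2}). The problem is your treatment of the complementary regime $r2^k\gg 1$: the H\"ormander condition you are aiming for simply fails at the critical exponent $\beta=n\lambda/2$. On the wave-front annulus the stationary-phase expansion gives $K_k(x,u)\approx 2^{-kn(\lambda-1)}e^{i\Phi(x,u)}$ with $|\nabla_x\Phi|\sim 2^k$; hence when $|z|\le r$ and $r2^k\gg 1$ the phase difference $\Phi(x-z,u)-\Phi(x,u)$ is large and the kernel difference does not gain smallness, neither pointwise in $u$ nor after taking $L^2_u(I)$ or $L^2_u(\R)$ via Plancherel (the oscillation in $u$ has modulus bounded above and below, so $\|e^{i\psi(u)}-1\|_{L^2_u(I)}\sim |I|^{1/2}$). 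The per-piece H\"ormander integral is therefore $\sim 1$ whenever $2^{-k}\ll r\ll 2^{k(\lambda-1)}$, and for any fixed $r$ there are infinitely many such $k$; your claimed bound $\min(r2^k,1)$ is not summable in $k$, and neither the Bessel asymptotics nor the restriction to $|x|>2r$ supplies the missing decay (when $r\ll 2^{k(\lambda-1)}$ the wave front lies entirely inside $\{|x|>2r\}$).

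The paper avoids this obstruction by abandoning the kernel-difference estimate in the large-$r2^k$ regime altogether. Instead it splits the $L^1_x$ integral at the wave-front radius $C2^{k(\lambda-1)}+r$: outside one uses the rapid pointwise decay of $K_k$; inside one applies Cauchy--Schwarz in $x$ over the ball of measure $\sim (2^{k(\lambda-1)}+r)^n$, then Plancherel and the multiplier bound $|m_u\psi_k|\lesssim 2^{-k\beta}$, and finally the atom bound $\|a\|_2\lesssim r^{-n/2}$. This yields the summable estimate $(2^k r)^{-n/2}+2^{-k\beta}$ (the paper's \eqref{eqn:prop5.1.est1}). This is not a kernel condition at all---the gain comes from the $L^2$ bound of the operator and the $L^2$ norm of the atom, not from cancellation in the kernel difference. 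Incidentally, this is also how the scalar Miyachi--Fefferman $H^1\to L^1$ theorem at $\beta=n\lambda/2$ is proved; it does not go through a H\"ormander condition either, so your closing analogy points in the right direction but to the wrong argument.
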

All implied constants may depend on $\lambda,\beta,n,I$, but not on $f,\gamma,x,u$.

Theorem~\ref{thm5.2} implies Proposition \ref{170713prop5.1} via complex interpolation; see, for example, \cite[Chapter IV.6.17]{MR1232192} for a discussion of interpolation between Hardy spaces.
For the scalar theory of the multiplier $m_u$ (for fixed $u$) we refer to \cite{Miy81}, \cite{FS72}, \cite{Fef70}.

Recall that an $H^1$ atom of radius $r$ is a bounded function  $a$ on $\R^n$ that is supported in a ball of radius $r$ and satisfies $\|a\|_\infty\le r^{-n}$ and $\int_{\R^n} a = 0$. The Hardy space $H^1(\R^n)$ is a Banach space consisting of functions of the form $f=\sum_{j} c_j a_j$ with $\sum_{j} |c_j|<\infty$, where the $(a_j)_j$ are $H^1$ atoms. Its norm is defined as
\[ \|f\|_{H^1(\R^n)} = \inf \sum_j |c_j|, \]
where the infimum is taken over all atomic decompositions of $f=\sum_j c_j a_j$.

To prove Theorem~\ref{thm5.2} it suffices to show that
\begin{equation}
\label{eqn:prop5.1.atomicest}
\| T a \|_{L_x^1(L_u^2)} \lesssim 1
\end{equation}
holds for every $H^1$ atom $a$ of radius $r$. We may assume the support of $a$ to be centered at the origin.

For $j>0$, let $P_j$ denote the usual Littlewood--Paley projection with $\widehat{P_j f}=\psi_j \widehat{f}$, $\psi_j$ supported on $|\xi| \simeq 2^j$. Let $\widehat{P_0 f}=\psi_{0} \widehat{f}$ where $\psi_0$ is such that
\[ 1 = \psi_{0} + \sum_{j>0} \psi_j. \] (Note that $P_0$ here is actually $P_{\leq 0}$ from the previous section.)
For $j\ge 0$ we denote by $\widetilde{\psi}_j$ a smooth positive function that equals one on the support of $\psi_j$ and whose support is contained in a small neighborhood of the support of $\psi_j$.
Define 
\[ K_u^{(j)} (x) =  \widehat{m_u}*\widehat{\psi_j} (x) = \int_{\R^n} e^{ix\xi+iu|\xi|^{\lambda}} (1+|\xi|^2)^{-\beta/2} \psi_j(\xi) d\xi. \]

Before we begin, we record the following pointwise estimates for $K_u^{(j)}(x)$.
From estimating the second derivative of the phase we obtain
\begin{equation}\label{eqn:prop5.1.kernelest1}
|K_u^{(j)}(x)| \lesssim 2^{-jn(\lambda-1)}\;\;\text{for all}\;x\in\R^n, u\in I.
\end{equation}
Here we used that $\beta = n\lambda/2$.
Estimating the first derivative of the phase we obtain
\begin{equation}\label{eqn:prop5.1.kernelest2}
 |K_u^{(j)}(x)| \lesssim_N 2^{-j\beta} 2^{jn} (2^j |x|)^{-N}\;\;\text{for}\;|x|\gtrsim 2^{j(\lambda-1)}, u\in I.
\end{equation}
for all $N\ge 0$. Note that these estimates are uniform in $u\in I$.

Let us prove the main estimate \eqref{eqn:prop5.1.atomicest}. The first step is to apply the triangle inequality:
\[ \|T a\|_{L^1_x(L^2_u)} \le \sum_{j\ge 0} \| K_u^{(j)} * a\|_{L^1_x(L^2_u)}. \]
We will estimate the summand in two different ways: in particular, it will be shown below that
\begin{equation}\label{eqn:prop5.1.est1}
 \| K_u^{(j)} * a\|_{L^1_x(L^2_u)} \lesssim (2^j r)^{-n/2} + 2^{-j\beta}
\end{equation}
\begin{equation}\label{eqn:prop5.1.est2}
 \| K_u^{(j)} * a\|_{L^1_x(L^2_u)} \lesssim 2^j r.
\end{equation}
These estimates immediately imply \eqref{eqn:prop5.1.atomicest}.

To prove \eqref{eqn:prop5.1.est1} we first split up\footnote{$C$ is a constant that may depend on the parameters $\lambda,\beta,n$.}
 the integral in $x$:
\[ \| K_u^{(j)} * a\|_{L^1_x(L^2_u)}\le \mathbf{I} + \mathbf{II},\,\text{where}\]
\[ \mathbf{I} = \| K_u^{(j)} * a\|_{L^1_x(\R^n\backslash B(C2^{j(\lambda-1)}+r);L^2_u(I))} ,\,\text{and}\]
\[ \mathbf{II} = \| K_u^{(j)} * a\|_{L^1_x(B(C2^{j(\lambda-1)}+r);L^2_u(I))}. \]
We claim that $\mathbf{I}\lesssim_N 2^{-jN}$. Indeed, we see from \eqref{eqn:prop5.1.kernelest2} that
\begin{align*}
\mathbf{I} &\le \int_{|x|\ge C2^{j(\lambda-1)}+r} \Big(\int_I \Big( \int_{|y|\le r} |K_u^{(j)}(x-y) a(y)| dy \Big)^2 du \Big)^{1/2} dx\\
& \lesssim_N 2^{-j\beta} 2^{jn} 2^{-jN} \int_{|x|\ge C2^{j(\lambda-1)}+r} \int_{|y|\le r} |x-y|^{-N} |a(y)| dy dx \\
& \le 2^{-j\beta} 2^{jn} 2^{-jN} \|a\|_1 \int_{|x|\gtrsim 2^{j(\lambda-1)}} |x|^{-N} dx \lesssim 2^{-\beta j} 2^{jn\lambda} 2^{-jN\lambda},
\end{align*}
which implies the claim (since $N$ is arbitrary). To estimate the second part we use the Cauchy-Schwarz inequality:
\[ \mathbf{II} \le (C2^{j(\lambda-1)}+r)^{n/2} \|K_u^{(j)}*a\|_{L^2_x(L^2_u)}.\]
Then we have by the Fubini and Plancherel theorems that
\[\|K_u^{(j)}*a\|_{L^2_x(L^2_u)} = \Big\| \|K_u^{(j)}*a\|_2 \Big\|_{L^2_u(I)}\lesssim 2^{-j\beta} \|a\|_2 \lesssim 2^{-j\beta} r^{-n/2},\]
which implies that
\[ \mathbf{II} \lesssim (2^{j(\lambda-1)n/2} + r^{n/2}) 2^{-j\beta} r^{-n/2}=(2^j r)^{-n/2} + 2^{-j\beta},\]
as desired (we used that $\beta=n\lambda/2$). This proves \eqref{eqn:prop5.1.est1}.\\

It remains to show \eqref{eqn:prop5.1.est2}. Clearly we have 
\begin{equation}\label{eqn:prop5.1.maxesta} 
\| K_u^{(j)} * a\|_{L^1_x(L^2_u)} \lesssim \|\sup_{u\in I} |K_u^{(j)}*a| \|_1.
\end{equation}
We claim that
\begin{equation}\label{eqn:prop5.1.maxest}
\|\sup_{u\in I} |K_u^{(j)}*a| \|_1 \lesssim \|P_j a\|_1.
\end{equation}
To see this replace $\psi_j$ by $\widetilde{\psi}_j$ in the definition of $K_u^{(j)}$ and call the resulting kernel $\widetilde{K_u^{(j)}}$. Then we have
\[ K_u^{(j)}*a = \widetilde{K_u^{(j)}}*P_j a.\]
It is clear that $\widetilde{K_u^{(j)}}$ satisfies the same pointwise estimates \eqref{eqn:prop5.1.kernelest1}, \eqref{eqn:prop5.1.kernelest2} (possibly with larger constants). Thus, there exists a positive function $w_j$ on $\R^n$ such that $\|w_j\|_1\lesssim 1$ and
\[ |\widetilde{K_u^{(j)}}(x)| \le w_j(x) \]
for all $x\in\R^n$ and $u\in I$.
As a consequence,
\[ \|\sup_{u\in I} |K_u^{(j)}*a| \|_1\le \|\sup_{u\in I} |\widetilde{K_u^{(j)}}|*|P_j a| \|_1 \le \|w_j * |P_j a|\|_1\lesssim \|P_j a\|_1,  \]
which is our claim \eqref{eqn:prop5.1.maxest}.
But by the mean zero property of $a$ and the mean value theorem we have
\[ P_j a (x) = \int_{\R^n} (\widehat{\psi_j}(x-y)-\widehat{\psi_j}(x)) a(y) dy = -\int_{\R^n} \int_0^1 y\cdot \nabla \widehat{\psi_j} (x-ty) dt \, a(y) dy.\]
This implies that
\[ \|P_j a\|_1 \le \int_{\R^n} \int_{|y|\le r} \int_0^1 |y| |\nabla \widehat{\psi}_j(x-ty)| |a(y)| dt dy dx\lesssim 2^j r.\]
In view of (\ref{eqn:prop5.1.maxesta}) and (\ref{eqn:prop5.1.maxest}), this establishes \eqref{eqn:prop5.1.est2}.

\end{proof}

\section{A Counterexample: The proof of Theorem \ref{170712thm1.2}}\label{section:counter}

Let $\phi$ be a smooth test function supported in the annulus $1/2\le |\xi|\le 2$ and define $f_k$ for $k\in\Z$ by \[\widehat{f_k}(\xi) = \phi(2^{-k} \xi).\]
On the one hand, clearly,
\[ \|f_k\|_p = 2^{nk} \left(\int_{\mathbb{R}^n} |\widehat{\phi}(2^k x)|^p dx\right)^{1/p}\approx 2^{nk/p'}.\]
On the other hand, we claim that
\begin{equation}\label{eqn:vrnormclaimgen}
\|V^r \{ \mathcal{H}^{(u)} f_k \colon u \in \mathbb{R}\} \|_p \gtrsim 2^{k(-n(\alpha-1)\frac1{p'} +\frac\alpha r)}.
\end{equation}
If the \eqref{170614e1.2} were to hold, then plugging in $f_k$ into the estimate \eqref{170614e1.2} and letting $k\to\infty$, we see that
\[ -n(\alpha-1)\frac1{p'} +\frac\alpha r\le \frac{n}{p'},
\]
which is equivalent to that $p'\le nr$.

For simplicity we will verify this only in the case $n=1$, $K(t)=\textrm{p.v.} \frac{1}{t}$, $\alpha=2$. The general case can be treated in the same manner.
In this case \eqref{eqn:vrnormclaimgen} takes the form
\begin{equation}\label{eqn:vrnormclaim}
\|V^r \{ \mathcal{H}^{(u)} f_k \colon u \in \mathbb{R}\}\|_p \gtrsim 2^{k(-1+\frac1p+\frac2r)}.
\end{equation}
We can choose $\varphi$ such that
\[ \frac1t = \sum_{j\in\Z} \varphi_j(t) \]
for all $t\not=0$, where $\varphi_j(t)=2^{-j} \varphi(2^{-j} t)$.
By Fourier inversion we have
\[ \mathcal{H}^{(u)} f_k (x) = \sum_{j\in\Z} \iint \phi(2^{-k}\xi) e^{ix\xi} e^{-it\xi+iut^2} \varphi_{j+k}(t) dt d\xi \]
\[ = \sum_{j\in\Z} 2^k \int e^{i2^k x\xi} \phi(\xi) \int e^{-i2^{2k}t\xi+i2^{2k} ut^2} \varphi_j(t) dt d\xi = \sum_{j\in\Z} \mathbf{I}_j.\]
(Keep in mind that $\mathbf{I}_j$ also depends on $k,x,u$.)
Let us take $u\in[1,2]$. Then the phase of the oscillatory integral in $t$ has no critical points if $|j|>10$. This motivates us to set
\[ \mathbf{I}^{\textrm{main}} = \sum_{|j|\le 10} \mathbf{I}_j,\,\text{and}\,\,\mathbf{I}^{\textrm{err}} = \sum_{|j|>10} \mathbf{I}_j.\]
Write $B=[2^k,2^{k+1}]$ and estimate
\begin{align*} 
\|V^r \{ \mathcal{H}^{(u)} f_k \colon u \in \mathbb{R}\}\|_p 
&\ge \|V^r \{ \mathcal{H}^{(u)} f_k \colon u \in [1,2]\}\|_{L^p(B)} \\
&\ge \|V^r \{ \mathbf{I}^{\textrm{main}} \colon u \in [1,2]\}\|_{L^p(B)} - \|V^r \{ \mathbf{I}^{\textrm{err}} \colon u \in [1,2]\}\|_{L^p(B)}.
\end{align*}
In order to verify \eqref{eqn:vrnormclaim} it suffices to show that
\begin{equation}\label{eqn:maintermclaim}
\|V^r \{ \mathbf{I}^{\textrm{main}} \colon u \in [1,2]\}\|_{L^p([2^k,2^{k+1}])} \gtrsim 2^{k (-1+\frac1p+\frac2r)}
\end{equation}
and
\begin{equation}\label{eqn:errtermclaim}
\|V^r \{ \mathbf{I}^{\textrm{err}} \colon u \in [1,2]\}\|_{L^p([2^k,2^{k+1}])} \lesssim 2^{-2k}.
\end{equation}

We begin with the proof of \eqref{eqn:maintermclaim}. Write
\[ \mathbf{I}^{\text{main}} = 2^k \int e^{i2^k x\xi} \phi(\xi) \int e^{-i2^{2k}t\xi+i2^{2k} ut^2} \rho(t) dt d\xi,\]
where $\rho(t) = \sum_{|j|\le 10} \varphi_j(t)$. Note that the phase of the integral in $t$ has a critical point at $t_c = \frac{\xi}{2u}$. By the principle of stationary phase (\cite[Chapter VIII.5.7]{MR1232192} or \cite[Theorem 1.2.1]{MR1205579}) we have\[ \int e^{-i2^{2k}t\xi+i2^{2k} ut^2} \rho(t) dt = 2^{-k} c_0 e^{i2^{2k} c_1 \xi^2 u^{-1}} u^{-\frac12} \rho(\xi/(2u)) + O(2^{-2k}).\]
Here $c_0,c_1$ are non-zero constants. To simplify the calculation, let us take $c_0=c_1=1$. Thus, the main contribution to $\mathbf{I}^{\textrm{main}}$ is
\[ \int e^{i 2^{2k}(\tilde{x}\xi+\xi^2 u^{-1})} a(\xi,u) d\xi,\]
where $x=2^k \tilde{x}\in [2^k,2^{k+1}]$ and $a(\xi,u) =  \phi(\xi) u^{-\frac12} \rho(\xi/(2u)).$
Note that the $u$-derivative of the error term coming from stationary phase is also $O(2^{-2k})$. Therefore that term contributes only $O(2^{-2k})$ to the variation-norm and we can ignore it. From another application of the stationary phase principle we see that the previous integral can essentially be written in the form
\[ 2^{-k} e^{ix^2 u} b(u) + O(2^{-2k}),\]
where $b(u)=\phi(\tilde{x}u/2) \rho(\tilde{x}/4)$. Let $u_\ell = \ell \pi / x^2$ for $x^2/\pi <\ell<2x^2/\pi$.
Then 
\[ \left(\sum_{x^2/\pi <\ell<2x^2/\pi} |e^{ix^2 u_{\ell+1}}-e^{ix^2 u_\ell}|^r\right)^{1/r} \approx 2^{\frac{2k}{r}} \]
which implies the claim \eqref{eqn:maintermclaim} (the contribution of $b(u)$ is negligible).

It remains to treat $\mathbf{I}^{\textrm{err}}$. Compute
\[\partial_u \mathbf{I}_j= i 2^{3k+2j} \int \phi(\xi) e^{i2^k x\xi} \int e^{-i2^{2k+j}t\xi+i2^{2k+2j}ut^2} t^2 \varphi(t) dt d\xi\]
\[= i 2^{3k+2j} \int \widehat{\phi}(2^{2k+j}t-2^k x) e^{i2^{2k+2j}ut^2} t^2 \varphi(t) dt.\]
Observe that if $x\in[2^k,2^{k+1}]$, $|t|\in [1/2,2]$ and $|j|>10$ we have
\[ |2^{2k+j}t-2^k x|\approx 2^{2k} \max(1,2^j). \]
Since $\widehat{\phi}$ decays rapidly, we obtain
\[ V^r \{ \mathbf{I}_j \colon u \in [1,2]\} \lesssim \|\partial_u \mathbf{I}_j\|_{L^1_u([1,2])} \lesssim_N 2^{3k+2j-2Nk} \min(1,2^{-Nj}) \]
for every $N\ge 1$. Taking $N$ large enough ($N=3$ suffices) and summing over $|j| > 10$, we obtain \eqref{eqn:errtermclaim}.

\section{Proof of Proposition~\ref{prop:Seeger88}}\label{180320section7}

In this section we provide a proof of Proposition~\ref{prop:Seeger88}, which was stated in Jones, Seeger and Wright \cite{MR2434308} without proof. Indeed, Proposition~\ref{prop:Seeger88} is a vector-valued analog of Theorem 1 of Seeger \cite{MR955772}. The proof of Proposition~\ref{prop:Seeger88} follows closely that of the scalar-valued case in Seeger \cite{MR955772}. On the other hand, at one point in the scalar-valued case, Seeger used a duality argument between $L^p$ and $L^{p'}$, which is not available in the vector-valued setting. This is why we had to assume that hypothesis (\ref{eq:assump_1piece}) holds not just for $s = p$, but also at $s = 2$.

To prove Proposition~\ref{prop:Seeger88}, one key tool is the Fefferman-Stein sharp function. Let $B$ be a Banach space. For us we will only need the case $B = \ell^2(\Z) L^2(I) $. For each measurable function $F \colon \R^n \to B$, define its Hardy-Littlewood maximal function $\mathcal{M} F$ by
\[
\mathcal{M} F(x) = \sup_{x \in Q} \fint_Q |F(y)|_B dy
\]
for each $x \in \R^n$, where the supremum is over all cubes $Q$ containing $x$. Also define the sharp function $F^{\sharp}$ of $F$ by
\[
F^{\sharp}(x) = \sup_{x \in Q} \fint_{Q} |F(y) - F_Q|_B dy
\]
where $F_Q = \fint_Q F(y) dy$; again the supremum is over all cubes $Q$ containing $x$. We have the following lemma about $F^{\sharp}$:

\begin{lem} \label{lem:FSsharp}
Suppose $0 < p < \infty$. Let $F \in L^{p_0}(\R^n,B)$ for some $0 < p_0 \leq p$. If $F^{\sharp} \in L^p(\R^n)$, then $\mathcal{M} F \in L^p(\R^n)$, and 
\[
\|\mathcal{M} F \|_{L^p(\R^n)} \lesssim_{n,p} \|F^{\sharp}\|_{L^p(\R^n)}.
\]
\end{lem}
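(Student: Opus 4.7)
The plan is to reduce the Banach-space-valued inequality to the well-known scalar Fefferman--Stein sharp function inequality by considering the pointwise norm $g(x) := |F(x)|_B$. First I would observe that the two maximal functions agree: since $\mathcal{M}F(x) = \sup_{Q \ni x} \fint_Q |F(y)|_B \, dy$ is defined using the scalar quantity $|F(y)|_B$, one has $\mathcal{M}F(x) = \mathcal{M}g(x)$, where $\mathcal{M}g$ denotes the ordinary Hardy--Littlewood maximal function of the scalar function $g$. Moreover, $g \in L^{p_0}(\R^n)$ because $F \in L^{p_0}(\R^n,B)$, which provides the a priori finiteness needed for the scalar Fefferman--Stein theorem to apply.

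Next I would establish the pointwise comparison
\[
g^{\sharp}(x) \le 2\, F^{\sharp}(x),
\]
where $g^{\sharp}$ is the scalar sharp function of $g$. For a cube $Q$ containing $x$, the triangle inequality in $B$ gives $\bigl||F(y)|_B - |F_Q|_B\bigr| \le |F(y) - F_Q|_B$; combined with the standard fact that $\fint_Q |g - g_Q| \le 2\inf_{c \in \R} \fint_Q |g - c|$, one obtains
\[
\fint_Q |g(y) - g_Q|\,dy \;\le\; 2 \fint_Q \bigl||F(y)|_B - |F_Q|_B \bigr| dy \;\le\; 2 \fint_Q |F(y) - F_Q|_B\,dy.
\]
Taking the supremum over cubes $Q$ through $x$ yields the pointwise bound.

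With these two observations in place, the scalar Fefferman--Stein inequality (see, for instance, Stein's monograph \cite{MR1232192}, or Grafakos's textbook treatment), which applies to $g$ under the hypothesis $g \in L^{p_0}$, gives $\|\mathcal{M}g\|_{L^p(\R^n)} \lesssim_{n,p} \|g^{\sharp}\|_{L^p(\R^n)}$. Chaining the identity $\mathcal{M}F = \mathcal{M}g$ with the pointwise bound $g^{\sharp} \le 2F^{\sharp}$ immediately yields
\[
\|\mathcal{M}F\|_{L^p(\R^n)} = \|\mathcal{M}g\|_{L^p(\R^n)} \lesssim_{n,p} \|g^{\sharp}\|_{L^p(\R^n)} \le 2\|F^{\sharp}\|_{L^p(\R^n)},
\]
which is the desired conclusion.

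The only real subtlety is verifying the pointwise comparison $g^{\sharp} \le 2 F^{\sharp}$; this requires being careful that the two sharp functions use different ``means'' (the scalar average $g_Q$ versus the Bochner average $F_Q$), but the two-step triangle inequality above handles this cleanly and is the essential use of the Banach-space structure beyond what the scalar argument sees. Alternatively, one could reprove the good-$\lambda$ inequality directly for $B$-valued $F$ via a Calder\'on--Zygmund/Whitney decomposition of $\{\mathcal{M}F > \lambda\}$, which goes through verbatim because all the relevant estimates rely only on the triangle inequality for $|\cdot|_B$; but the scalar reduction is considerably shorter.
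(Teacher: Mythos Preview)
Your proof is correct and takes a genuinely different, more economical route than the paper. The paper reproves the good-$\lambda$ (relative distributional) inequality
\[
|\{\mathcal{M}F > \alpha,\ F^{\sharp} \le c\alpha\}| \lesssim_n c\,|\{\mathcal{M}F > b\alpha\}|
\]
directly for $B$-valued $F$ via a Whitney decomposition of $\{\mathcal{M}F > b\alpha\}$---exactly the alternative you sketch in your final paragraph---and then cites Stein's lemma to pass from the distributional inequality to the $L^p$ norm bound. Your reduction to the scalar case via $g = |F|_B$ is shorter: the identity $\mathcal{M}F = \mathcal{M}g$ is immediate from the definition, and the pointwise bound $g^{\sharp} \le 2F^{\sharp}$ (reverse triangle inequality together with the standard oscillation estimate $\fint_Q|g-g_Q| \le 2\fint_Q|g-c|$ applied with $c = |F_Q|_B$) does all the Banach-space work in two lines, after which the scalar Fefferman--Stein theorem applies verbatim. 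The paper's approach is more self-contained, while yours makes transparent that nothing beyond the triangle inequality in $B$ is being used and avoids rerunning the Whitney argument.
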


We give a proof of this lemma at the end.

Now given $f \in L^p \cap L^2(\R^n)$, define $Tf \colon \R^n \to B$ by
\[
Tf(x) = \left( T_{u,j} f(x) \right)_{u \in I, j \in \Z}.
\]
Note $Tf \in L^2(\R^n,B)$. Then
\[
\left\| \| \|T_{u,j} f\|_{L^2(I)} \|_{\ell^2(\Z)} \right\|_{L^p(\R^n)}
= \left\| |Tf|_B \right\|_{L^p(\R^n)}
\leq \left\| \mathcal{M}(Tf) \right\|_{L^p(\R^n)}
\lesssim_{n,p} \|(Tf)^{\sharp}\|_{L^p(\R^n)},
\]
where in the last inequality we invoked the lemma with $p_0 = 2$.
Note that for a.e. $x \in \R^n$,
\[
(Tf)^{\sharp}(x) \simeq \fint_{Q_x} |Tf(y) - (Tf)_{Q_x}|_B dy
\]
for some cube $Q_x$ containing $x$; we may choose $Q_x$ such that the side length of $Q_x$ is $2^{r(x)}$ for some integer $r(x)$. Then we split
\[
(Tf)^{\sharp}(x) \lesssim \sigma_1((T_{u,j}f),x) + \sigma_2((P_j f),x)
\]
where $N$ is a positive integer to be chosen; here
\[
\sigma_1(G,x) = \fint_{Q_x} \left( \sum_{|j +r(x)| \leq N} \|G_{u,j}(y)-(G_{u,j})_{Q_x} \|_{L^2(I)}^2 \right)^{1/2} dy
\]
\[
\sigma_2(H,x) = \fint_{Q_x} \left( \sum_{|j +r(x)| > N} \|T_{u,j} H_j (y) - (T_{u,j} H_j)_{Q_x} \|_{L^2(I)}^2 \right)^{1/2} dy
\]
for any functions $G = (G_{u,j}) \colon \R^n \to B$ and $H = (H_j) \colon \R^n \to \ell^2(\Z)$. We claim that
\begin{equation} \label{eq:sigma1}
\|\sigma_1(G,x)\|_{L^p(\R^n)} \lesssim N^{\frac{1}{2}-\frac{1}{p}} \| \| \| G_{u,j}(x) \|_{L^2(I)} \|_{\ell^p(\Z)} \|_{L^p(\R^n)}
\end{equation}
\begin{equation} \label{eq:sigma2}
\|\sigma_2(H,x)\|_{L^p(\R^n)} \lesssim (A+B 2^{-N}) \| \| H_j(x)\|_{\ell^2(\Z)} \|_{L^p(\R^n)}
\end{equation}
for any $G = (G_{u,j}) \colon \R^n \to B$ and $H = (H_j) \colon \R^n \to \ell^2(\Z)$.
But when $G_{u,j} = T_{u,j} f$, we have
\begin{align*}
\| \| \| G_{u,j}(x) \|_{L^2(I)} \|_{\ell^p(\Z)} \|_{L^p(\R^n)} 
&= \| \| \| T_{u,j}f(x) \|_{L^2(I)} \|_{L^p(\R^n)}  \|_{\ell^p(\Z)}\\
&\lesssim A \| \| P_j f(x) \|_{L^p(\R^n)} \|_{\ell^p(\Z)} \\
&\lesssim A \| \| P_j f(x) \|_{\ell^2(\Z)} \|_{L^p(\R^n)} \\
&\lesssim A \|f\|_{L^p(\R^n)} 
\end{align*}
(we used assumption (\ref{eq:assump_1piece}) in the second inequality, $p \in [2, \infty)$ in the third inequality, and Littlewood--Paley inequality in the last). Also, when $H_j = P_j f$, we have
\[ \| \| H_j(x)\|_{\ell^2(\Z)} \|_{L^p(\R^n)} \lesssim \|f\|_{L^p(\R^n)}. \]
Hence
\[
\|(Tf)^{\sharp}\|_{L^p(\R^n)} \lesssim (AN^{\frac{1}{2}-\frac{1}{p}} + A + B2^{-N} ) \|f\|_{L^p(\R^n)}.
\]
Choosing $N \simeq \log(2+\frac{B}{A})$ gives the desired conclusion of the proposition. It remains to prove (\ref{eq:sigma1}) and (\ref{eq:sigma2}).

To prove (\ref{eq:sigma1}), we interpolate between $p=2$ and $p=\infty$. Indeed, we prove
\begin{equation} \label{eq:sigma1a}
\|\sigma_1(G,x)\|_{L^2(\R^n)} \lesssim \| \| \| G_{u,j}(x) \|_{L^2(I)} \|_{\ell^2(\Z)} \|_{L^2(\R^n)}
\end{equation}
\begin{equation} \label{eq:sigma1b}
\|\sigma_1(G,x)\|_{L^{\infty}(\R^n)} \lesssim N^{\frac{1}{2}} \| \| \| G_{u,j}(x) \|_{L^2(I)} \|_{\ell^{\infty}(\Z)} \|_{L^{\infty}(\R^n)}
\end{equation}
The desired estimate (\ref{eq:sigma1}) then follows by complex interpolation and linearizing $\sigma_1$.

The estimate (\ref{eq:sigma1a}) follows since
\begin{equation} \label{eq:sigma1_ptwise}
\sigma_1(G,x) \leq 2 \fint_{Q_x} \left( \sum_{|j +r(x)| \leq N} \|G_{u,j}(y) \|_{L^2(I)}^2 \right)^{1/2} dy
\end{equation}
so
\[
\sigma_1(G,x) \lesssim \fint_{Q_x} \| \|G_{u,j}(y) \|_{L^2(I)} \|_{\ell^2(\Z)} dy  \lesssim M \| \|G_{u,j} \|_{L^2(I)} \|_{\ell^2(\Z)} (x)
\]
where $M$ is the standard (scalar-valued) Hardy-Littlewood maximal function on $\R^n$. Hence 
\[
\|\sigma_1(G,x) \|_{L^2(\R^n)} \lesssim \| \| \|G_{u,j}(x) \|_{L^2(I)} \|_{\ell^2(\Z)} \|_{L^2(\R^n)}
\]
as in (\ref{eq:sigma1a}).

To prove (\ref{eq:sigma1b}), note that for each $x \in \R^n$, we have, from (\ref{eq:sigma1_ptwise}), that
\[
\sigma_1(G,x) \leq 2 \sup_{y \in \R^n} \left( \sum_{|j +r(x)| \leq N} \|G_{u,j}(y) \|_{L^2(I)}^2 \right)^{1/2} 
\lesssim N^{1/2} \sup_{y \in \R^n} \sup_{j \in \Z} \|G_{u,j}(y)\|_{L^2(I)},
\]
with constants uniform in $x$. This gives (\ref{eq:sigma1b}).

Next, to prove (\ref{eq:sigma2}), we will prove
\begin{equation} \label{eq:sigma2a}
\|\sigma_2(H,x)\|_{L^2(\R^n)} \lesssim A \| \|H_j(x)\|_{\ell^2(\Z)} \|_{L^2(\R^n)}
\end{equation} 
\begin{equation} \label{eq:sigma2b}
\|\sigma_2(H,x)\|_{L^{\infty}(\R^n)} \lesssim (A+B 2^{-N}) \| \|H_j(x)\|_{\ell^2(\Z)} \|_{L^{\infty}(\R^n)}
\end{equation} 
The desired estimate (\ref{eq:sigma2}) then follows by complex interpolation and linearizing $\sigma_2$.

To prove (\ref{eq:sigma2a}), note that
\[
\sigma_2(H,x) \leq 2 \fint_{Q_x} \left( \sum_{|j+r(x)|>N} \|T_{u,j} H_j\|_{L^2(I)}^2 \right)^{1/2} dy
\]
so
\[
\sigma_2(H,x)
\lesssim  \fint_{Q_x} \| \|T_{u,j} H_j\|_{L^2(I)} \|_{\ell^2(\Z)} dy
\lesssim M \| \| T_{u,j} H_j \|_{L^2(I)} \|_{\ell^2(\Z)} (x)
\]
Hence
\[
\| \sigma_2(H,x) \|_{L^2(\R^n)} \lesssim \| \| \| T_{u,j} H_j(x) \|_{L^2(I)} \|_{\ell^2(\Z)} \|_{L^2(\R^n)}.
\]
We commute the $\ell^2(\Z)$ norm outside. Since 
\begin{equation} \label{eq:problematic}
\| \| T_{u,j} H_j(x) \|_{L^2(I)} \|_{L^2(\R^n)}
\lesssim A \|H_j(x) \|_{L^2(\R^n)}
\end{equation}
one can conclude that
\[
\| \sigma_2(H,x) \|_{L^2(\R^n)}
\lesssim A \| \|H_j(x) \|_{L^2(\R^n)} \|_{\ell^2(\Z)}
\]
which gives (\ref{eq:sigma2a}) upon a further change in the order of the norms on the right hand side.

Now we proceed to prove (\ref{eq:sigma2b}). For each $x \in \R^n$, we decompose
\[
H_j(y) = (\chi_{2Q_x} H_j)(y) + (\chi_{(2Q_x)^c} H_j)(y) 
\]
for all $y \in \R^n$. Then we plug this back into the formula for $\sigma_2(H,x)$. We find that
\[
\sigma_2(H,x) \lesssim \mathbf{I}(x) + \mathbf{II}(x),
\]
where
\[
\mathbf{I}(x) = \fint_{Q_x} \left( \sum_{j \in \Z} \|T_{u,j} (\chi_{2Q_x} H_j)(y)- [T_{u,j} (\chi_{2Q_x} H_j)]_{Q_j}\|_{L^2(I)}^2 \right)^{1/2} dy
\]
\[
\mathbf{II}(x) = \fint_{Q_x} \left( \sum_{|j+r(x)| > N} \left\|T_{u,j} (\chi_{(2Q_x)^c} H_j)(y) - [T_{u,j} (\chi_{(2Q_x)^c} H_j)]_{Q_j} \right\|_{L^2(I)}^2 \right)^{1/2} dy.
\]
We estimate $\mathbf{I}(x)$ by
\begin{align*}
\mathbf{I}(x) 
&\leq 2 \fint_{Q_x} \left( \sum_{j \in \Z} \|T_{u,j} (\chi_{2Q_x} H_j)(y) \|_{L^2(I)}^2 \right)^{1/2} dy \\
& \lesssim  \left(  \fint_{Q_x} \sum_{j \in \Z} \|T_{u,j} (\chi_{2Q_x} H_j)(y) \|_{L^2(I)}^2  dy  \right)^{1/2} \\
& \lesssim \frac{1}{|Q_x|^{1/2}} \| \| \| T_{u,j} (\chi_{2Q_x} H_j)(y) \|_{L^2(I)} \|_{L^2(\R^n)} \|_{\ell^2(\Z)} \\
& \lesssim \frac{A}{|Q_x|^{1/2}} \| \| (\chi_{2Q_x} H_j)(y) \|_{L^2(\R^n)} \|_{\ell^2(\Z)}
\end{align*}
where in the last inequality we used the estimate (\ref{eq:problematic}). Then
\begin{align*}
\mathbf{I}(x) \lesssim 
A \left( \fint_{2Q_x} \| H_j(y) \|_{\ell^2(\Z)} dy \right)^{1/2} \lesssim A \sup_{y \in \R^n} \| H_j(y) \|_{\ell^2(\Z)},
\end{align*}
which shows that
\[
\|\mathbf{I}(x)\|_{L^{\infty}(\R^n)} \lesssim A \| \|H_j(x) \|_{\ell^2(\Z)} \|_{L^{\infty}(\R^n)}.
\]
Next we estimate $\mathbf{II}(x)$. Let $K_{u,j}$ be the convolution kernel of $T_{u,j}$. Then 
\[
K_{u,j}(x) = 2^{jn} K_{u}(2^j x)
\]
where
\[
K_u(x) = \int_{\R^n} \tilde{m}_u(\xi) e^{2\pi i x \cdot \xi} d\xi
\]
Now by our assumption on $\partial_{\xi}^{\tau} \tilde{m}_u(\xi)$, we have
\[
\sup_{u \in I} |K_u(x)| + \sup_{u \in I} |\nabla_x K_u(x)| \lesssim \frac{B}{(1+|x|)^{n+1}}.
\]
We claim now
\begin{equation} \label{eq:CZest}
\sup_{y, z \in Q_x} \left( \sum_{|j+r(x)| > N} \left( \int_{(2Q_x)^c} \sup_{u \in I} | K_{u,j}(y-w) - K_{u,j}(z-w) | dw \right)^2 \right)^{1/2} 
\lesssim B2^{-N}
\end{equation}
uniformly for $x \in \R^n$. Indeed, suppose $y, z \in Q_x$, and $j + r(x) > -N$. Then 
\begin{align*}
\int_{(2Q_x)^c} \sup_{u \in I} | K_{u,j}(y-w) - K_{u,j}(z-w) | dw
\end{align*}
can be estimated as
\begin{align*}
&\lesssim \int_{(2Q_x)^c} \sup_{u \in I} (| K_{u,j}(y-w) | + | K_{u,j}(z-w) | ) dw \\
&\lesssim 2^{jn} \int_{|w-x| \gtrsim 2^{r(x)}} \frac{B}{(2^j |w-x|)^{n+1}}  dw\lesssim B 2^{-j-r(x)}.
\end{align*}
On the other hand, if $y, z \in Q_x$, and $j + r(x) < -N$, then 
\begin{align*}
\int_{(2Q_x)^c} \sup_{u \in I} | K_{u,j}(y-w) - K_{u,j}(z-w) | dw
\end{align*}
is bounded by a constant times
\begin{align*}
&\int_0^1 \int_{(2Q_x)^c} \sup_{u \in I} |y-z| |\nabla_x K_{u,j} ((1-t)y+tz-w)| dw dt \\
&\lesssim 2^j 2^{r(x)} \int_{\R^n} 2^{jn} |(\nabla_x K_u)(2^j w)|  dw \lesssim B 2^{j+r(x)}.
\end{align*}
Summing over $j$ such that $j + r(x) > N$ and $j + r(x) < -N$ respectively, we see that (\ref{eq:CZest}) follows.

Finally, it suffices to observe that
\begin{align*}
\mathbf{II}(x) & \lesssim \fint_{Q_x} \fint_{Q_x} \left( \sum_{|j+r(x)| > N} \left\|T_{u,j} (\chi_{(2Q_x)^c} H_j)(y) - T_{u,j} (\chi_{(2Q_x)^c} H_j)(z) \right\|_{L^2(I)}^2 \right)^{1/2} dy dz
\end{align*}
\begin{align*}
& = \sup_{y, z \in Q_x} \left( \sum_{|j+r(x)| > N} \left\| \int_{(2Q_x)^c} [K_{u,j}(y-w) - K_{u,j}(z-w)] H_j(w) dw \right\|_{L^2(I)}^2 \right)^{1/2} \\
& \lesssim \sup_{y, z \in Q_x} \left( \sum_{|j+r(x)| > N} \left( \int_{(2Q_x)^c} \left\| K_{u,j}(y-w) - K_{u,j}(z-w) \right\|_{L^2(I)} dw \right)^2 \right)^{1/2} \|\|H_j\|_{\ell^{\infty}(\Z)} \|_{L^{\infty}(\R^n)} \\
& \lesssim \sup_{y, z \in Q_x} \left( \sum_{|j+r(x)| > N} \left( \int_{(2Q_x)^c} \sup_{u \in I} | K_{u,j}(y-w) - K_{u,j}(z-w) | dw \right)^2 \right)^{1/2} \|\|H_j\|_{\ell^2(\Z)} \|_{L^{\infty}(\R^n)}.
\end{align*}
Invoking (\ref{eq:CZest}) yields
\[
\|\mathbf{II}(x)\|_{L^{\infty}(\R^n)} \lesssim B 2^{-N} \| \|H_j(x)\|_{\ell^2(\Z)} \|_{L^{\infty}(\R^n)},
\]
which together with our earlier estimate about $\|\mathbf{I}(x)\|_{L^{\infty}(\R^n)}$ gives (\ref{eq:sigma2b}).

\begin{proof}[Proof of Lemma~\ref{lem:FSsharp}]
The key is a relative distribution inequality. Fix the Banach space $B$. For any $n \geq 1$, we claim that there exists $b_n \in (0,1)$, such that for any $b, c > 0$ with $b \leq b_n$, we have
\[
|\{x \in \R^n \colon \mathcal{M} F(x) > \alpha, F^{\sharp}(x) \leq c \alpha\}| \lesssim_n c |\{x \in \R^n \colon \mathcal{M} F(x) > b \alpha\}|
\]
If this is true, then by taking $c$ sufficiently small, we can use Lemma 2 of Chapter IV.3.5 of \cite{MR1232192} (see also the Remark on the bottom of p.152 there) and conclude the proof of Lemma~\ref{lem:FSsharp}.

To prove the above relative distributional inequality, let $b \in (0,1)$ first. Let $F \in L^p(\R^n,B)$. Decompose the open set $\{x \in \R^n \colon \mathcal{M} F(x) > b \alpha\}$ into an essentially disjoint union of Whitney cubes $\{Q\}$, so that the distance of each $Q$ from the complement of this set is bounded by $4$ times the diameter of $Q$. Now since $\{x \in \R^n \colon \mathcal{M} F(x) > \alpha, F^{\sharp}(x) \leq c \alpha\}$ is a subset of $\{x \in \R^n \colon \mathcal{M} F(x) > b \alpha\}$, we just need to show that for each Whitney cube $Q$ as above, we have
\[
|\{x \in Q \colon \mathcal{M} F(x) > \alpha, F^{\sharp}(x) \leq c \alpha\}| \lesssim_n c |Q|.
\]
This inequality would be trivial if the set on the left hand side were empty. So let's assume there exists a point $x_0 \in Q$ such that $F^{\sharp}(x_0) \leq c\alpha$.  Now let $\tilde{Q}$ be any cube that intersects $Q$ and that has diameter at least that of $Q$. Then $20\tilde{Q}$ will contain a point $y$ where $\mathcal{M} F(y) \leq b \alpha$. Hence $\fint_{\tilde{Q}} |F|_B \leq 20^n b\alpha$ for all such cubes $\tilde{Q}$. If $x \in Q$ and $\mathcal{M} F(x) > \alpha$, then by taking $b < 20^{-n}$, we see that $\mathcal{M} (F \chi_{3Q})(x) > \alpha$. We also have $\fint_{3Q} |F|_B \leq 20^n b\alpha$. Thus,
\[
\{x \in Q \colon \mathcal{M} F(x) > \alpha, F^{\sharp}(x) \leq c \alpha \} \subset\]
\[\hspace{4cm} \left \{x \in Q \colon \mathcal{M} \left(F \chi_{3Q} - \fint_{3Q} F \right)(x) > (1-20^n b)\alpha \right \},
\]
whose measure is bounded by 
\[
\frac{C_n}{(1-20^n b) \alpha} \int_Q |F \chi_{3Q}(y) - F_{3Q}|_B dy 
\leq \frac{C_n}{(1-20^n b) \alpha} |3Q| F^{\sharp}(x_0) \leq \frac{3^n C_n}{(1-20^n b)} c|Q|
\]
where $C_n$ is the constant arising in the weak-type (1,1) bound of $\mathcal{M} \colon L^1(\R^n,B) \to L^{1,\infty}(\R^n)$.
This proves the desired relative distributional inequality.
\end{proof}

\section{Appendix: An improved local smoothing estimate}\label{180320section8}

In this section we prove Theorem \ref{180320thm1.6}.
Let $\chi: \R^n\to \R$ be a non-negative smooth bump function supported on $1\le |\xi|\le 2$. Define 
\[
E_0 f(x, t):=\int_{\R^n} f(\xi)\chi(\xi) e^{ix\cdot \xi+it|\xi|^{\gamma}} d\xi.
\]
We will prove that
\beq\label{170713a.4}
\|E_0 f\|_{L^p(\R^n\times [-\lambda, \lambda])} \lesim \lambda^{n(\frac{1}{2}-\frac{1}{p})+\epsilon} \|\hat{f}\|_{L^p(\R^n)}
\endeq
for every $\lambda\ge 1$. Once this is proved, a rescaling argument shows that 
\beq\label{170713a.2}
\left(\int_{\R^n\times I} \left| \int_{\R^n}e^{ix\xi}\hat{f}(\xi)e^{it|\xi|^{\gamma}} d\xi\right|^p dxdt\right)^{1/p} \lesim 2^{k\gamma n(\frac{1}{2}-\frac{1}{p})-\frac{k\gamma }{p}+k\epsilon}\|f\|_{L^{p}(\R^n)}
\endeq
whenever $\hat{f}(\xi)$ is supported on the $k$-th annulus, that is $|\xi|\approx 2^k$. As a consequence we obtain Theorem \ref{180320thm1.6}.\\

We will prove \eqref{170713a.4} for every elliptic phase. Let $c_0$ be a small positive real number. 
Let $\phi: \R^n\to \R$ be a smooth function with 
\beq\label{elliptic}
|\phi(\xi)|+|\nabla \phi(\xi)|\lesim 1, c_0 I_n \le (\nabla^2 \phi)(\xi)\le \frac{1}{c_0} I_n \quad \text{ for every } |\xi|<10,
\endeq 
where $I_n$ is the identity matrix of order $n\times n$. Let $\chi_0: \R^n\to \R$ be a non-negative smooth bump function supported on $|\xi|\le 2$. Define 
\[
E_{\phi} f(x, t):=\int_{\R^n} f(\xi)\chi_0(\xi) e^{ix\cdot \xi+it\phi(\xi)} d\xi.
\]
We will prove that
\beq\label{170713a.4-a}
\sup_{\phi: \eqref{elliptic}}\|E_{\phi} f\|_{L^p(\R^n\times [-\lambda, \lambda])} \lesim \lambda^{n(\frac{1}{2}-\frac{1}{p})+\epsilon} \|\hat{f}\|_{L^p(\R^n)}
\endeq
for every $\lambda\ge 1$. 

For a ball $B_{\lambda}\subset \R^{n+1}$ of radius $\lambda$, we will let $B_{\lambda}^-$ denote its projection in the first $n$ variables, i.e. spatial variables. That is, $B_{\lambda}^-$ is a ball of radius $\lambda$ in $\R^n$. We also define the associated weight 
\[
w_{B_{\lambda}^-}(x):=\frac{1}{(1+\frac{\|x-c\|}{\lambda})^{100n}} \text{ for } x\in \R^n.
\]
Here $c$ denotes the center of $B_{\lambda}^-$. We remark that in the argument below, various implicit constants depend on this choice of weight. However, this dependence is not important, and to avoid unnecessary technicalities, we will not make these details explicit. We refer the interested reader to Li \cite{Li17}, which contains all the necessary details that are required to run the Bourgain--Guth argument \cite{MR2860188}. \\

We prove \eqref{170713a.4-a} by an inductive argument. Denote by $Q_{\lambda}$ the smallest constant such that 
\[
\sup_{\phi: \eqref{elliptic}}\|E_{\phi} f\|_{L^p(B_{\lambda})} \le Q_{\lambda} \cdot \lambda^{n(\frac{1}{2}-\frac{1}{p})}\|\hat{f}\|_{L^p(w_{B^-_{\lambda}})}
\]
for every ball $B_{\lambda}\subset \R^{n+1}$ with $B_{\lambda}\subset B^-_{\lambda}\times [-\lambda, \lambda]$. Of course our goal is to prove that 
\beq \label{eq:Qind}
Q_{\lambda}\lesim_{\epsilon} \lambda^{\epsilon}
\endeq
for every $\epsilon>0$. In the following, for the sake of simplicity, we will always abbreviate $E_{\phi} f$ to $E f$.\\

First, we normalize $f$ such that 
\beq\label{normalisation}
\|\hat{f}\|_{L^p(w_{B^-_{\lambda}})} \lambda^{n(\frac{1}{2}-\frac{1}{p})} = 1.
\endeq
Next, let $K_n$ be a large integer that is to be determined, satisfying 
\[
K_n\ll \lambda^{\epsilon}.
\]
For a large dyadic integer $K$, let $\mathrm{Col}_K$ denote the collection of all dyadic cubes of length $1/K$. We write 
\[
Ef=\sum_{\alpha_n\in \mathrm{Col}_{K_n}} Ef_{\alpha_n} \quad \text{with} \quad f_{\alpha_n}:=f\cdot \mathbbm{1}_{\alpha_n}.
\]
Here $\{\mathbbm{1}_{\alpha_n}\}_{n}$ forms a smooth partition of unity, and $\mathbbm{1}_{\alpha_n}$ is supported on $2\alpha_n$. On every ball $B_{K_n}\subset \R^{n+1}$ of radius $K_n$, by the uncertainty principle, we know that $|E f_{\alpha_n}|$ is essentially a constant, for every $\alpha_n\in \mathrm{Col}_{K_n}$. We let $|Ef_{\alpha_n}|(B_{K_n})$ denote this constant. Denote by $\alpha_n^*$ the cube that maximizes 
\[
\{|E f_{\alpha_n}|(B_{K_n})\}_{\alpha_n\in Col_{K_n}}.
\] 
Consider the collection 
\[
\mathrm{Col}^*_{K_n}:=\{\alpha_n\in \mathrm{Col}_{K_n}: |Ef_{\alpha_n}|(B_{K_n})\ge K_n^{-n} |Ef_{\alpha^*_n}|(B_{K_n})\}.
\]
Here the choice of the coefficient $K_n^{-n}$ is not strict. One can also use $K_n^{-2n}$ or something even smaller. \\

\noindent There are three cases. 
\[
\begin{split}
\text{Case 1: } & \text{There exists an integer $1\le j\le n$, and cubes $\alpha_n^{(1)}, \dots, \alpha_n^{(j)}\in \mathrm{Col}^*_{K_n}$ }\\
& \text{which are $\tfrac{1}{K_{n-1}}$-separated such that every cube within $\mathrm{Col}^*_{K_n}$ is }\\
& \text{in the $\tfrac{1}{K_{n-1}}$ neighborhood of some $\alpha^{(j')}_{n}$.}
\end{split}
\]
Here $K_{n-1}\ll K^{\epsilon}_n$  is also to be determined. Next, we have 
\[
\begin{split}
\text{Case 2: }& \text{There exist cubes $\alpha_n^{(1)}, \dots, \alpha_n^{(n+1)}\in \mathrm{Col}^*_{K_n}$ that are $\tfrac{1}{K_{n-1}}$ separated and do}\\
& \text{not lie in the $\tfrac{100}{K_n}$ neighborhood of any $(n-1)$-dimensional subspace.}
\end{split}
\]
If Case 1 and Case 2 are not satisfied, then we have 
\[
\begin{split}
\text{Case 3: } & \text{All cubes in $\mathrm{Col}^*_{K_n}$ lie in the $\tfrac{C(K_{n-1})}{K_n}$ neighborhood of a subspace}\\
&\text{ of dimension $(n-1)$.}
\end{split}
\]
Here $C(K_{n-1})$ is a large constant depending on $K_{n-1}$ which may change from line to line (it always suffices to take, say, $C(K_{n-1})=K_{n-1}^{100n}$).\\

We deal with these three cases separately. In Case 1, we have 
\beq\label{0113e1.17}
|Ef|\lesim \max_{\alpha_n\in Col_{K_n}} |Ef_{\alpha_n}|+\max_{\alpha_{n-1}\in Col_{K_{n-1}}} |Ef_{\alpha_{n-1}}|.
\endeq
In Case 2, we use
\[
|Ef|\lesim K_n^{2n} \left( \prod_{j=1}^{n+1} |Ef_{\alpha_n^{(j)}}|\right)^{\frac{1}{n+1}}.
\]
In Case 3, we use 
\beq\label{0113e1.19}
\begin{split}
& |Ef|\lesim  \max_{\alpha_n\in Col_{K_n}} |Ef_{\alpha_n}| + \max_{\substack{L_{n-1}: \text{ subspace}\\ \text{ of dimension } (n-1)}} \Big|\sum_{\alpha_{n-1}: \text{dist}(\alpha_{n-1}, L_{n-1})\le \frac{1}{K_{n-1}}}Ef_{\alpha_{n-1}}\Big|.
\end{split}
\endeq
In the last summation, we implicitly assumed that $\alpha_{n-1}\in \mathrm{Col}_{K_{n-1}}$, and that 
\[
\frac{C(K_{n-1})}{K_n}\le \frac{1}{K_{n-1}}.
\] Here we agree upon a convention: for $1\le j\le n$, whenever the symbol $\alpha_j$ appears, we always assume that $\alpha_j\in \mathrm{Col}_{K_j}$, to keep notation simpler. Combining \eqref{0113e1.17}--\eqref{0113e1.19}, we obtain
\[
\begin{split}
|Ef|\lesim & \max_{\alpha_n\in \mathrm{Col}_{K_n}} |Ef_{\alpha_n}|+\max_{\alpha_{n-1}\in \mathrm{Col}_{K_{n-1}}} |Ef_{\alpha_{n-1}}| + K_n^{2n} \left( \prod_{j=1}^{n+1} |Ef_{\alpha_n^{(j)}}|\right)^{\frac{1}{n+1}}\\
& + \max_{\substack{L_{n-1}: \text{ subspace}\\ \text{ of dimension } (n-1)}} \Big|\sum_{\alpha_{n-1}: \text{dist}(\alpha_{n-1}, L_{n-1})\le \frac{1}{K_{n-1}}}Ef_{\alpha_{n-1}}\Big|.
\end{split}
\]
We raise both sides to the $p$-th power, then integrate over $B_{K_n}$, and in the end 
sum over balls $B_{K_n}$ inside $B_{\lambda}$,
\beq\label{different-case}
\begin{split}
\int_{B_{\lambda}}|Ef|^p\lesim & \sum_{\alpha_n\in \mathrm{Col}_{K_n}} \int_{B_{\lambda}}|Ef_{\alpha_n}|^p+\sum_{\alpha_{n-1}\in \mathrm{Col}_{K_{n-1}}} \int_{B_{\lambda}}|Ef_{\alpha_{n-1}}|^p\\
& + \sum_{\alpha_n^{(1)}, \dots, \alpha_n^{(n+1)} \text{ in Case 2} }\int_{B_{\lambda}} K_n^{2pn} \left( \prod_{j=1}^{n+1} |Ef_{\alpha_n^{(j)}}|\right)^{\frac{p}{n+1}}\\
& + \sum_{B_{K_n}\subset B_{\lambda}}\max_{L_{n-1}} \int_{B_{K_n}}\Big|\sum_{\alpha_{n-1}: \text{dist}(\alpha_{n-1}, L_{n-1})\le \frac{1}{K_{n-1}}}Ef_{\alpha_{n-1}}\Big|^p.
\end{split}
\endeq
There are four terms on the right hand side. It is the contribution from the last term that gives us the ultimate constraint for the exponent $p$, as stated in \eqref{p-exponent}. \\

Let us be more precise. The first and second summands on the right hand side of \eqref{different-case} can be taken care of by parabolic rescaling. We will deal with the third summand using multilinear restriction estimates due to Bennett, Carbery and Tao \cite{BCT06}. The last term requires further careful analysis.

For the first and second summands, we will apply rescaling. The argument is the same in both cases. Hence we will only write down the rescaling argument for the first summand. 
\[
\int_{B_{\lambda}} |Ef_{\alpha_n}|^p =\int_{B_{\lambda}} \left|\int f_{\alpha_n}(\xi)e^{i\xi x+it\phi(\xi)}d\xi \right|^p dxdt.
\]
Here we apply the change of variable 
\[
\xi\to \frac{\xi}{K_n} + c_{\alpha_n}, \text{ with } c_{\alpha_n} \text{ being the center of } \alpha_n. 
\]
We obtain 
\[
\begin{split}
& K_n^{-np}\int_{B_{\lambda}} \left|\int f_{\alpha_n}(\tfrac{\xi}{K_n}+c_{\alpha_n})e^{i(\tfrac{\xi}{K_n}+c_{\alpha_n}) x+i t \phi(\tfrac{\xi}{K_n}+c_{\alpha_n})}d\xi \right|^p dxdt\\
= & K_n^{-np}\int_{B_{\lambda}} \left|\int f_{\alpha_n}(\tfrac{\xi}{K_n}+c_{\alpha_n})e^{i\tfrac{\xi}{K_n} x+iK_n^2 \phi(\tfrac{\xi}{K_n}+c_{\alpha_n})\frac{t}{K_n^2}}d\xi\right|^p dxdt
\end{split}
\]
Next we apply the change of variables 
\[
x/K_n\to x \text{ and } t/K_n^2\to t
\]
to obtain 
\beq\label{180320e1.30}
K_n^{-np+n+2}\int_{\widetilde{B_{\lambda}}} \left|\int f_{\alpha_n}(\tfrac{\xi}{K_n}+c_{\alpha_n})e^{i\xi x+it\cdot K_n^2  \phi(\frac{\xi}{K_n}+c_{\alpha_n})}d\xi \right|^p dxdt
\endeq
Here $\widetilde{B_{\lambda}}\subset \R^{n+1}$ is a rectangular box of dimensions $\frac{\lambda}{K_n}\times \dots \times \frac{\lambda}{K_n}\times \frac{\lambda}{K_n^{2}}$. The reason of writing it in this form is that 
\[
\widetilde{\phi}(\xi):=K_n^2 \phi(\tfrac{\xi}{K_n}+c_{\alpha_n})-K_n \langle (\nabla \phi)(c_{\alpha_n}), \xi\rangle-K_n^2 \phi(c_{\alpha_n}) \text{ still satisfies } \eqref{elliptic}. 
\]
By a change of variable, \eqref{180320e1.30} can be bounded by 
\[
K_n^{-np+n+2}\int_{2\widetilde{B_{\lambda}}} \left|\int f_{\alpha_n}(\tfrac{\xi}{K_n}+c_{\alpha_n})e^{i\xi x+it \widetilde{\phi}(\xi)}d\xi \right|^p dxdt.
\]

Next we split the rectangular box $2\widetilde{B_{\lambda}}$ into a union of cubes of side-length $\frac{\lambda}{K_n^{2}}$.
\[
\begin{split}
& K_n^{-np}K_n^{n+2}\sum_{B_{\lambda/K_n^{2}}\subset 2\widetilde{B_{\lambda}}}\int_{B_{\lambda/K_n^{2}}} \left|\int f_{\alpha_n}(\tfrac{\xi}{K_n}+c_{\alpha_n})e^{i\xi x+it \widetilde{\phi}(\xi)}d\xi\right|^p dxdt\\
 \lesim & K_n^{-np}K_n^{n+2}  \sum_{B^-_{\lambda/K_n^{2}}} \left(\frac{\lambda}{K_n^{2}}\right)^{np(\tfrac{1}{2}-\frac{1}{p})} Q_{\tfrac{\lambda}{K_n^{2}}}^p \int_{\R^n} \left|\widehat{f_{\alpha_n}(\tfrac{\cdot}{K_n}+c_{\alpha_n})} (x)\right|^p w_{B^-_{\lambda/K_n^2}}(x)dx
\end{split}
\]
Here we applied the induction hypothesis. Notice that in the last summand, we were (essentially) summing over spatial balls of radius $\lambda/K_n^{2}$ on $\R^n$.
\[
\begin{split}
\lesim & K_n^{-np}K_n^{n+2} \left(\frac{\lambda}{K_n^{2}}\right)^{np(\frac{1}{2}-\frac{1}{p})} Q_{\tfrac{\lambda}{K_n^{2}}}^p \int_{\R^n} \left|\widehat{f_{\alpha_n}(\tfrac{\cdot}{K_n}+c_{\alpha_n})} (x)\right|^p w_{B^-_{\lambda/K_n}}(x) dx\\
\lesim & K_n^{-np}K_n^{n+2} K_n^{np-n} \left(\frac{\lambda}{K_n^{2}}\right)^{np(\frac{1}{2}-\frac{1}{p})} Q_{\frac{\lambda}{K_n^{2}}}^p \int_{\R^n} \left|\widehat{f_{\alpha_n}} (\xi)\right|^p w_{B^-_{\lambda}}(x) dx
\end{split}
\]
Sum over $\alpha_n$, we obtain 
\[
\begin{split}
& K_n^{-np} K_n^{n+2} \left(\frac{\lambda}{K_n^{2}}\right)^{np(\frac{1}{2}-\frac{1}{p})} Q^p_{\frac{\lambda}{K_n^{2}}} K_n^{-n} K_n^{np} \sum_{\alpha_n} \int_{\R^n} |\widehat{f_{\alpha_n}}|^p w_{B^-_{\lambda}}\\
\lesim & K_n^{-np} K_n^{n+2} \left(\frac{1}{K_n^{2}}\right)^{np(\frac{1}{2}-\frac{1}{p})} Q^p_{\frac{\lambda}{K_n^{2}}} K_n^{-n} K_n^{np} 
\end{split}
\]
In the last step, we applied the normalization condition \eqref{normalisation}. Let the exponent of $K_n$ be equal to zero and we obtain 
\[
2-2np(\tfrac{1}{2}-\tfrac{1}{p})=0\Rightarrow p= \tfrac{2(n+1)}{n}.
\]
This is exactly the exponent in the Fourier restriction conjecture. Moreover, the last display tells us that, for the contribution from the first and second terms in \eqref{different-case}, the induction can be closed whenever $p> \frac{2(n+1)}{n}$. \\

Now we deal with the third summand on the right hand side of \eqref{different-case}. When $p\ge  \frac{2(n+1)}{n}$, by multilinear restriction of Bennett, Carbery and Tao \cite{BCT06} and by Bernstein's inequality and H\"older's inequality, 
\[
\begin{split}
& \sum_{\alpha_n^{(1)}, \dots, \alpha_n^{(n+1)} \text{ in Case 2} }\int_{B_{\lambda}} K_n^{2n} \left( \prod_{j=1}^{n+1} |Ef_{\alpha_n^{(j)}}|\right)^{\frac{p}{n+1}} \lesim K_{n}^{2n} K_n^{100 n!} \lambda^{\epsilon}.
\end{split}
\]
Again we see that there is no problem for this term as $K_n$ can be chosen to be much smaller compared with $\lambda^{\epsilon}$. \\

In the end, we come to the last summand on the right hand side of \eqref{different-case}. Fix a ball $B_{K_n}\subset \R^{n+1}$. Assume that the maximum is attained at the $(n-1)$ dimensional subspace $L_{n-1}$. We need to consider
\[
\int_{B_{K_n}}\Big|\sum_{\alpha_{n-1}: \text{dist}(\alpha_{n-1}, L_{n-1})\le \frac{1}{K_{n-1}}}Ef_{\alpha_{n-1}}\Big|^p.
\]
Notice that each $|Ef_{\alpha_{n-1}}|$ is essentially a constant on $B_{K_{n-1}}$, a ball of radius $K_{n-1}$ which is much smaller compared with $K_n^{\epsilon}$. Hence tentatively we fix a ball $B_{K_{n-1}}\subset B_{K_n}$. Let $\alpha_{n-1}^*$ denote the cube that maximizes 
\[
\{|E f_{\alpha_{n-1}}|(B_{K_{n-1}})\}_{\alpha_{n-1}: \text{dist}(\alpha_{n-1}, L_{n-1})\le \frac{1}{K_{n-1}}}.
\] 
Consider the collection 
\[
\mathrm{Col}^*_{K_{n-1}}:=\{\alpha_{n-1}: \text{dist}(\alpha_{n-1}, L_{n-1})\le \tfrac{1}{K_{n-1}} \text{ and } |Ef_{\alpha_{n-1}}|(B_{K_{n-1}})\ge K_{n-1}^{-n} |Ef_{\alpha^*_{n-1}}|(B_{K_{n-1}})\}.
\]
\noindent There are three further cases. 
\[
\begin{split}
\text{Case 3.1: } & \text{There exists an integer $1\le j\le n-1$, and cubes $\alpha_{n-1}^{(1)}, \dots, \alpha_{n-1}^{(j)}\in \mathrm{Col}^*_{K_{n-1}}$ }\\
& \text{which are $\tfrac{1}{K_{n-2}}$-separated such that every cube within $\mathrm{Col}^*_{K_{n-1}}$ is }\\
& \text{in the $\tfrac{1}{K_{n-2}}$ neighborhood of some $\alpha^{(j')}_{n-1}$.}
\end{split}
\]
Here $K_{n-2}\ll K^{\epsilon}_{n-1}$  is also to be determined. Moreover, we have 
\[
\begin{split}
\text{Case 3.2: } & \text{There exist cubes $\alpha_{n-1}^{(1)}, \dots, \alpha_{n-1}^{(n)}\in \mathrm{Col}^*_{K_{n-1}}$ that are $\tfrac{1}{K_{n-2}}$ separated}\\
& \text{and do not lie in the $\tfrac{100}{K_{n-1}}$ neighborhood of any $(n-2)$-dimensional subspace.}
\end{split}
\]
If the above two cases are not satisfied, then we must have 
\[
\begin{split}
\text{Case 3.3: } & \text{All cubes in $\mathrm{Col}^*_{K_{n-1}}$ lie in the $\tfrac{C(K_{n-2})}{K_{n-1}}$ neighborhood of a linear subspace}\\
&\text{of dimension $(n-2)$.}
\end{split}
\]
Here $C(K_{n-2})$ is a large constant depending on $K_{n-2}$. It suffices to take $C(K_{n-2})=K_{n-2}^{100n}$.\\

Similarly to \eqref{0113e1.17}--\eqref{0113e1.19}, we have that for every point $(x, t)\in B_{K_{n-1}}$,
\[
\begin{split}
|Ef|\lesim & \max_{\alpha_n\in Col_{K_n}} |Ef_{\alpha_n}|+\max_{\alpha_{n-1}\in Col_{K_{n-1}}} |Ef_{\alpha_{n-1}}| +\max_{\alpha_{n-2}\in Col_{K_{n-2}}} |Ef_{\alpha_{n-2}}|\\
& + K_{n-1}^{2n} \left( \prod_{j=1}^{n} |Ef_{\alpha_{n-1}^{(j)}}|\right)^{\frac{1}{n}} + \max_{L_{n-2}} \Big|\sum_{\alpha_{n-2}: \text{dist}(\alpha_{n-2}, L_{n-2})\le \frac{1}{K_{n-2}}}Ef_{\alpha_{n-2}}\Big|.
\end{split}
\]
We first raise both sides to the $p$-th power, integrate over $B_{K_{n-1}}$, 
and then sum over all balls $B_{K_{n-1}}\subset B_{K_n}$, and in the end sum over all balls $B_{K_n}\subset B_{\lambda}$, 
\[
\begin{split}
\int_{B_{\lambda}}|Ef|^p\lesim & \sum_{\alpha_n\in Col_{K_n}} \int_{B_{\lambda}}|Ef_{\alpha_n}|^p+\sum_{\alpha_{n-1}\in Col_{K_{n-1}}} \int_{B_{\lambda}}|Ef_{\alpha_{n-1}}|^p\\
& +\sum_{\alpha_{n-2}\in Col_{K_{n-2}}} \int_{B_{\lambda}}|Ef_{\alpha_{n-2}}|^p\\
& + \sum_{\alpha_{n-1}^{(1)}, \dots, \alpha_{n-1}^{(n)} \text{ in Case 3.2} }\int_{B_{\lambda}} K_{n-1}^{2n} \left( \prod_{j=1}^{n} |Ef_{\alpha_{n-1}^{(j)}}|\right)^{\frac{p}{n}}\\
& + \sum_{B_{K_n}\subset B_{\lambda}}  \sum_{B_{K_{n-1}}\subset B_{K_n}}\max_{L_{n-2}} \int_{B_{K_{n-1}}}\Big|\sum_{\alpha_{n-2}: \text{dist}(\alpha_{n-2}, L_{n-2})\le \frac{1}{K_{n-2}}}Ef_{\alpha_{n-2}}\Big|^p.
\end{split}
\]
There are five terms on the right hand side of the last display. By the same scaling argument as above, we can handle the first three summands. For the fourth summand, we again apply multilinear restrictions due to Bennett, Carbery and Tao \cite{BCT06}. However, notice that we are applying an $n$-linear restriction estimate in $\R^{n+1}$. This will not give us the restriction exponent $\frac{2(n+1)}{n}$, but something larger. To be precise, we have 
\[
\sum_{\alpha_{n-1}^{(1)}, \dots, \alpha_{n-1}^{(n)} \text{ in Case 3.2} }\int_{B_{\lambda}} K_{n-1}^{2n} \left( \prod_{j=1}^{n} |Ef_{\alpha_{n-1}^{(j)}}|\right)^{\frac{p}{n}} \lesim K_{n-1}^{2n} K_{n-1}^{100 n!} \lambda^{\epsilon},
\]
for every 
\[
p\ge \tfrac{2n}{n-1},
\]
which we assume. \\

Hence it remains to handle the last summand 
\[
\sum_{B_{K_{n-1}}\subset B_{\lambda}}\max_{L_{n-2}} \int_{B_{K_{n-1}}}\Big|\sum_{\alpha_{n-2}: \text{dist}(\alpha_{n-2}, L_{n-2})\le \frac{1}{K_{n-2}}}Ef_{\alpha_{n-2}}\Big|^p.
\]
We repeat this iteration until we reach
\beq\label{apply-decoupling}
\sum_{B_{K_{n-k}}\subset B_{\lambda}}\max_{L_{n-k-1}} \int_{B_{K_{n-k}}}\Big|\sum_{\alpha_{n-k-1}: \text{dist}(\alpha_{n-k-1}, L_{n-k-1})\le \frac{1}{K_{n-k-1}}}Ef_{\alpha_{n-k-1}}\Big|^p,
\endeq
where $k$ is the largest positive integer such that
\beq \label{eq:kchoice}
k \leq \tfrac{n-1}{3};
\endeq
in other words, 
\beq \label{eq:kchoice2}
k = 
\begin{cases}
\frac{n-3}{3} \quad \text{if $n \equiv 0 \pmod 3$} \\
\frac{n-1}{3} \quad \text{if $n \equiv 1 \pmod 3$} \\
\frac{n-2}{3} \quad \text{if $n \equiv 2 \pmod 3$}.
\end{cases}
\endeq
Collecting all the constraints on the exponent $p$ from applying the multilinear restriction estimate, we obtain 
\beq\label{constraint-1}
p\ge \tfrac{2(n-k+1)}{n-k}.
\endeq
Instead of running the previous argument again on \eqref{apply-decoupling}, we apply the decoupling inequalities of Bourgain and Demeter \cite{MR3374964}:
\[
\begin{split}
& \int_{B_{K_{n-k}}}\Big|\sum_{\alpha_{n-k-1}: \text{dist}(\alpha_{n-k-1}, L_{n-k-1})\le \frac{1}{K_{n-k-1}}}Ef_{\alpha_{n-k-1}}\Big|^p\\
\lesim & (K_{n-k-1})^{(n-k-1)(\frac{1}{2}-\frac{1}{p})p + \varepsilon} \sum_{\alpha_{n-k-1}} \int_{B_{K_{n-k}}} |Ef_{\alpha_{n-k-1}}|^p.
\end{split}
\]
The above inequality will hold as long as 
\beq \label{eq:pchoice1}
p \le \tfrac{2(n-k+1)}{n-k-1}.
\endeq 
In the end, we sum over all balls $B_{K_{n-k}}$ inside $B_{\lambda}$, and obtain 
\[ 
(K_{n-k-1})^{(n-k-1)(\frac{1}{2}-\frac{1}{p})p + \varepsilon} \sum_{\alpha_{n-k-1}} \int_{B_{\lambda}} |Ef_{\alpha_{n-k-1}}|^p.
\]
It is clear now that we should apply parabolic rescaling. This gives us
\[
(K_{n-k-1})^{(n-k-1)(\frac{1}{2}-\frac{1}{p})p + \varepsilon} \times (K_{n-k-1})^{2-2np(\frac{1}{2}-\frac{1}{p})} \times Q_{\frac{\lambda}{K_{n-k-1}^2}}^p.
\]
By equating the exponent of $K_{n-k-1}$ with zero we obtain the constraint
\beq \label{eq:pchoice2}
p> \tfrac{2(n+k+3)}{n+k+1}.
\endeq
This constraint is more restrictive than \eqref{constraint-1}, by condition~(\ref{eq:kchoice}). 
By choosing $k$ as in (\ref{eq:kchoice2}), and substituting that into (\ref{eq:pchoice2}), we obtain the constraint on $p$ in (\ref{p-exponent}). To summarize, we have shown that for $p$ satisfying (\ref{p-exponent}) and (\ref{eq:pchoice1}), we can close the induction. Thus, we have established (\ref{eq:Qind}) and therefore (\ref{eq:ls}) for such $p$'s. 
Finally, by the result of Rogers and Seeger \cite{MR2629687}, we already know that (\ref{eq:ls}) holds for all $p>2+\frac{4}{n+1}$ (even with $\varepsilon=0$), which completes the proof of Theorem~\ref{180320thm1.6} for the claimed range in $p$.

\section{Appendix: A maximal multi-frequency estimate of Krause and Lacey} \label{sect:KL}

Fix $\ell_0 \in \Z$ and $d \in \N$ with $d \geq 2$. Decompose $1 = \sum_{\ell \in \Z} \varphi_{\ell}(t)$ where $\varphi_{\ell}(t) := \varphi_0(2^{-\ell} t)$ for some smooth even function $\varphi_0$ supported on $|t| \simeq 1$.
For $u > 0$, let 
$$
T^{(u)} f(x) = \sum_{\ell \geq \ell_0} T^{(u)}_{\ell} f(x)
$$
where
$$
T^{(u)}_{\ell} f(x) := \int_{\R} f(x-t) \varphi_{\ell}(t) e^{i u t^d} \frac{dt}{t}.
$$
Following the method in Bourgain \cite{MR1019960}, we will prove the following maximal multi-frequency estimate.

\begin{thm} \label{thm:KL}
There exists a constant $C$, such that for any $\tau > 0$, $M \in \N$ and any $\theta_1, \dots, \theta_M \in \R$ with $\min_{1 \leq i < j \leq M} |\theta_i - \theta_j| > 2\tau$, we have 
\beq \label{eq:multifreq}
\left \| \sup_{u > 0} \left | \sum_{m=1}^M \Mod_{\theta_m} T^{(u)} P_{\tau} f_m \right | \right \|_{L^2(\R)} \leq C (\log M)^2 \left ( \sum_{m=1}^M \|f_m\|_{L^2(\R)}^2 \right )^{1/2}
\endeq
for all $f_1, \dots, f_M \in L^2(\R)$, where $P_{\tau}$ is the Littlewood-Paley projection onto the frequency interval $[-\tau,\tau]$.
\end{thm}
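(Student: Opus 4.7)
The plan is to follow Bourgain's maximal multi-frequency argument from \cite{MR1019960}, with the refined single-function variation-norm estimate Theorem~\ref{thm:mainsharpCp} playing the role of the underlying input. Write $G(x,u) := \sum_{m=1}^M \Mod_{\theta_m} T^{(u)} P_\tau f_m(x)$. From the pointwise bound
\[
\sup_{u > 0} |G(x,u)| \le |G(x,u_0)| + V^r\{G(x, u) : u > 0\},
\]
valid for any $r \ge 1$ and any reference value $u_0 > 0$, I first dispatch the $|G(\cdot,u_0)|$ contribution in $L^2$: since $P_\tau$ localizes the Fourier support to $[-\tau,\tau]$ and the $\theta_m$ are $2\tau$-separated, the modulated summands $\Mod_{\theta_m} T^{(u_0)} P_\tau f_m$ have pairwise disjoint Fourier supports, so Plancherel together with the uniform $L^2$-boundedness of $T^{(u_0)}$ that follows from \eqref{170712e1.5} gives
\[
\|G(\cdot,u_0)\|_{L^2}^2 = \sum_{m=1}^M \|T^{(u_0)} P_\tau f_m\|_{L^2}^2 \lesssim \sum_{m=1}^M \|f_m\|_{L^2}^2.
\]

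The main task is then to establish
\[
\|V^r\{G(\cdot,u) : u > 0\}\|_{L^2(\R)} \lesssim (\log M)^2 \Big(\sum_{m=1}^M \|f_m\|_{L^2}^2\Big)^{1/2}
\]
for the choice $r = 2 + 1/\log M$. Following the scheme of Section~\ref{section:long-jumps} and Section~\ref{section:short-jump-large}, I would split the $u$-axis dyadically into a long-jump part over the grid $\{2^{kd} : k \in \Z\}$ and a short-variation part on each dyadic sub-interval. For each piece, one applies the relevant single-function bound (Proposition~\ref{main1} for the long jumps; Theorem~\ref{thm:mainsharpCp} at exponent $p = r$ for the short variations) to each frequency band $\Mod_{\theta_m} T^{(u)} P_\tau f_m$, and then combines the $M$ bands by Rubio de Francia's square-function inequality, whose constant at $L^r$ with $r$ close to $2$ is at worst polynomial in $\log M$. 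The refined growth of the constant $C_r$ promised by Theorem~\ref{thm:mainsharpCp}, combined with the choice $r - 2 \simeq 1/\log M$, contributes one factor of $\log M$; a second factor comes from a Rademacher--Menshov-style passage from deterministic dyadic samples in $u$ to the continuous variation norm.

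The principal obstacle is this Rademacher--Menshov step: the optimal sequence realizing $V^r\{G(x, \cdot)\}$ depends measurably on $x$, so one cannot directly invoke the clean $L^2$ orthogonality of the packets $\Mod_{\theta_m} T^{(u_j)} P_\tau f_m$, which is only manifest when the times $u_j$ are chosen independently of $x$. The remedy, going back to Bourgain, is to perform the orthogonality argument first on a deterministic dyadic time grid --- where Plancherel across $m$ applies exactly --- and then to absorb the cost of transitioning to arbitrary, $x$-dependent sequences $u_j(x)$ into the polylogarithmic factor $(\log M)^2$ by summing a geometric series in the dyadic refinement level.
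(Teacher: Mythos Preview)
Your reduction to bounding $\|V^r\{G(\cdot,u):u>0\}\|_{L^2}$ is fine, but the proposed attack on that quantity has a genuine gap. The difficulty is not merely a technical Rademacher--Menshov step; it is that the variation norm of the sum $G(x,u)=\sum_m \Mod_{\theta_m}T^{(u)}P_\tau f_m(x)$ is not controlled by the individual variation norms in any way that Rubio de Francia can exploit. Rubio de Francia lets you pass from $\|\sum_m g_m\|_{L^p}$ to $\|(\sum_m|g_m|^2)^{1/2}\|_{L^p}$ when the $g_m$ have disjoint Fourier supports; but once you linearise $V^r\{G(x,u)\}$, you are looking at differences $G(x,u_j(x))-G(x,u_{j-1}(x))$ with $x$-dependent times, and the resulting functions of $x$ have no useful Fourier localisation. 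Your proposed remedy --- work first on a deterministic dyadic $u$-grid and then upgrade --- cannot produce a $(\log M)^2$ loss, because the number of dyadic $u$-scales required to resolve the continuous variation has nothing whatsoever to do with $M$; it is governed by the range of $u$ and the regularity of $u\mapsto T^{(u)}$, neither of which is tied to the number of frequencies. So your argument, as written, does not close.

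The paper's argument (following Bourgain) avoids ever computing $V^r$ of the sum. It first introduces an average over a spatial translation parameter $w\in[0,\tfrac{1}{100\tau}]$, which, thanks to the $2\tau$-separation of the $\theta_m$, converts the phases $e^{i\theta_m x}$ into an almost-orthogonal system in $L^2_w$ \emph{pointwise in $x$}. One then views $u\mapsto(T^{(u)}P_\tau f_1(x),\dots,T^{(u)}P_\tau f_M(x))$ as a bounded curve in $\R^M$, bounds the supremum over $u$ of the linear functional $\sum_m e^{-i\theta_m w}e^{i\theta_m x}(\cdot)_m$ via a chaining/entropy decomposition of that curve, and relates the entropy numbers $E_{2^s}(x)$ back to the $V^r$ norms of the \emph{individual} components via $\sup_s 2^s E_{2^s}(x)^{1/r}\le(\sum_m|V^r\{T^{(u)}P_\tau f_m(x)\}|^2)^{1/2}$. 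Only at that stage is Corollary~\ref{cor:mainsharpCp} invoked, on each $f_m$ separately. The two powers of $\log M$ arise from the choice $\tfrac12-\tfrac1r=\tfrac{1}{\log M}$ (contributing $(r-2)^{-1}\simeq\log M$) and from the roughly $\log M$ relevant entropy scales between $M^{-1/2}F(x)$ and $F(x)$.
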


Here $\Mod_{\theta} f(x)$ is the modulation $\Mod_{\theta} f(x) := e^{i \theta x} f(x)$.

As a corollary, we obtain the Theorem 3.5 of Krause and Lacey \cite{MR3658135}:
\begin{cor}[Krause--Lacey \cite{MR3658135}]
There exists a constant $C$, such that for any $\tau > 0$, $M \in \N$ and any $\theta_1, \dots, \theta_M \in \R$ with $\min_{1 \leq i < j \leq M} |\theta_i - \theta_j| > 2\tau$, we have
$$
\left \| \sup_{u > 0} \left | \sum_{m=1}^M \Mod_{\theta_m} T^{(u)} (P_{\tau}\Mod_{-\theta_m} f) \right | \right \|_{L^2(\R)} \leq C (\log M)^2 \|f\|_{L^2(\R)}
$$
for any $f \in L^2(\R)$.
\end{cor}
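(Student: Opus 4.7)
The plan is to deduce the corollary by a direct application of Theorem~\ref{thm:KL} with the choice
\[
f_m := P_\tau \Mod_{-\theta_m} f, \qquad m = 1, \ldots, M,
\]
together with Plancherel's theorem and the $2\tau$-separation of the $\theta_m$, which together will convert the bound $\bigl(\sum_m \|f_m\|_{L^2}^2\bigr)^{1/2}$ produced by Theorem~\ref{thm:KL} into the single term $\|f\|_{L^2}$ required by the corollary.

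First I would observe that the Fourier support of each $f_m$ is already contained in $[-\tau,\tau]$, so that $P_\tau f_m = f_m$ and hence
\[
\sum_{m=1}^M \Mod_{\theta_m} T^{(u)} P_\tau f_m \;=\; \sum_{m=1}^M \Mod_{\theta_m} T^{(u)} (P_\tau \Mod_{-\theta_m} f).
\]
Thus the quantity appearing on the left-hand side of \eqref{eq:multifreq} for this choice of inputs $f_m$ is precisely the quantity to be bounded in the corollary.

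Next I would bound $\sum_m \|f_m\|_{L^2}^2$. By Plancherel's theorem and the change of variable $\eta = \xi + \theta_m$,
\[
\|f_m\|_{L^2(\R)}^2 \;\leq\; \int_{[\theta_m - \tau,\, \theta_m + \tau]} |\widehat{f}(\eta)|^2\, d\eta.
\]
Since the $\theta_m$ are assumed to be $2\tau$-separated, the intervals $[\theta_m - \tau,\theta_m + \tau]$ are pairwise disjoint (up to a set of measure zero), so summing in $m$ yields $\sum_{m=1}^M \|f_m\|_{L^2(\R)}^2 \leq \|f\|_{L^2(\R)}^2$. Substituting this into the conclusion of Theorem~\ref{thm:KL} gives exactly the stated bound. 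There is no substantive obstacle in this argument: all of the analytic content is contained in Theorem~\ref{thm:KL}, and this reduction is a standard orthogonality step designed to convert a bound on $M$ independent inputs into a single-input maximal inequality.
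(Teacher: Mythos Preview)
Your proposal is correct and follows exactly the paper's approach: apply Theorem~\ref{thm:KL} with $f_m := P_\tau \Mod_{-\theta_m} f$ and use the $2\tau$-separation of the $\theta_m$ together with Plancherel to obtain $\sum_{m=1}^M \|f_m\|_{L^2}^2 \leq \|f\|_{L^2}^2$. You have simply supplied the details the paper leaves implicit (in particular the identity $P_\tau f_m = f_m$ and the disjointness of the intervals $[\theta_m-\tau,\theta_m+\tau]$).
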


Indeed, one can obtain the corollary by applying Theorem~\ref{thm:KL} to $f_m := P_{\tau} \Mod_{-\theta_m} f$, and noting that then $\sum_{m=1}^M \|f_m\|_{L^2(\R)}^2 \leq \|f\|_{L^2(\R)}^2$.

The corollary is slightly stronger than the Theorem 3.5 of Krause and Lacey \cite{MR3658135} because it allows one to take supremum over all $u > 0$ (not just over $u \in (0, \tau^2)$).

To prove Theorem~\ref{thm:KL}, we use the following variant of our Theorem~\ref{main}.

\begin{thm} \label{thm:mainsharpCp}
There exists a constant $C$, such that for all $p \in (2,3)$, we have
\beq \label{eq:VpLpsharpCp}
\left \| V^p \{T^{(u)} f \colon u > 0\} \right \|_{L^p(\R)} \leq C (p-2)^{-1} \|f\|_{L^p(\R)}.
\endeq
\end{thm}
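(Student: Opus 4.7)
The plan is to revisit the proof of Theorem~\ref{main} in the special case $n=1$, $\alpha=d$, $r=p$, while tracking the dependence of every constant on $p$ as $p \to 2^+$. The goal is to isolate the entire $p$-dependence of the final constant into a single geometric sum of the form $\sum_{\ell \ge 0} 2^{-c(p-2)\ell} \sim (p-2)^{-1}$.

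First I would split $V^p\{T^{(u)} f\}$ into long and short variations as in the proof of Theorem~\ref{main}. The long-variation dyadic jump function is controlled on $L^p$ uniformly for $p$ near $2$ by Proposition~\ref{main1}, and Bourgain's passage from jumps to $V^p$ costs at worst a factor of order $(p-2)^{-1/2}$, which is dominated by the claimed $(p-2)^{-1}$. It therefore suffices to show
\[ \|S_p(Tf)\|_{L^p} \lesssim (p-2)^{-1} \|f\|_{L^p}. \]

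For the short variations, I would follow Section~\ref{section:short-jump-large} and decompose $\mathcal{H}^{(u)} f = \sum_{\ell,k \in \Z} \mathcal{H}^{(u)}_{\ell-j} P_{j+k} f$. The off-diagonal contributions (the regime $\ell \le -k/(2(\alpha+1))$ from Section~\ref{sect4.2} and Cases~2 and 3 from Section~\ref{sect4.5}) provide geometric decay with constants that stay uniformly bounded as $p \to 2^+$: the vector-valued Seeger constant $(\log(2+B/A))^{1/2-1/p}$ tends to $1$, and the Plancherel--P\'olya constant $A_{q,r}$ at $q=2$, $r=p$ stays bounded. The delicate contribution is Case~1, with $\ell \ge 0$ and $k = \ell(\alpha-1)+O(1)$. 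Set $\mathcal{E}(p,\ell) := \bigl\|\|V^p_j(\mathcal{H}^{(u)}_{\ell-j} P_{j+k} f)\|_{\ell^p_j}\bigr\|_{L^p(\R)}$. The key point is to establish two endpoint estimates whose constants are independent of $p$ and $\ell$: at $p_0=2$,
\[ \mathcal{E}(2,\ell) \lesssim \|f\|_{L^2}, \]
by a direct Plancherel computation using the $L^\infty$ multiplier bound $2^{-\ell\alpha/2}$ coming from the stationary phase expansion~\eqref{eq:multiplier_expansion} together with Proposition~\ref{PPinequality} at $q=r=2$; and at a fixed $p_1 \in (4,\infty)$, say $p_1=5$,
\[ \mathcal{E}(p_1,\ell) \lesssim 2^{-\ell\alpha/p_1}\|f\|_{L^{p_1}}, \]
by the local smoothing inequality~\eqref{eq:localsmooth2}.

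Complex interpolation in vector-valued $L^p$ spaces between these two endpoints then gives, for every $p=r \in (2,p_1)$,
\[ \mathcal{E}(p,\ell) \lesssim 2^{-\theta\ell\alpha/p_1}\|f\|_{L^p}, \qquad \theta = \frac{1/2-1/p}{1/2-1/p_1}, \]
and $\theta \gtrsim (p-2)$ uniformly for $p$ near $2$. Summing over $\ell \ge 0$ (and over $k=\ell(\alpha-1)+O(1)$) yields $\sum_{\ell \ge 0}\mathcal{E}(p,\ell) \lesssim (p-2)^{-1}\|f\|_{L^p}$, which combined with the off-diagonal contributions controls $\|S_p(Tf)\|_{L^p}$ by $(p-2)^{-1}\|f\|_{L^p}$. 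The main obstacle is the careful bookkeeping required to confirm that every implicit constant in the supporting ingredients (Littlewood--Paley, the Plancherel--P\'olya constant $A_{2,p}$, the vector-valued Seeger theorem, the stationary phase remainders, and the local smoothing estimate at the \emph{fixed} exponent $p_1>4$) stays uniformly bounded as $p \to 2^+$; once this is verified, the only source of blow-up is the geometric sum in $\ell$, which produces exactly the claimed $(p-2)^{-1}$.
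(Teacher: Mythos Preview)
Your proposal is correct and follows essentially the same route as the paper's proof. The paper also splits into long and short variations, obtains $(p-2)^{-1}$ for the long dyadic variation via the jump inequality and Bourgain's real interpolation lemma, verifies that the off-diagonal pieces (Section~\ref{sect4.2} and Cases~2,~3) contribute with constants uniform for $p\in[2,3]$, and then asserts that ``the arguments in Section~\ref{sect4.4}'' yield the Case~1 bound $\mathcal{E}(p,\ell)\lesssim 2^{-\ell\delta d(p-2)}\|f\|_{L^p}$, whose sum over $\ell\ge 0$ produces the remaining factor $(p-2)^{-1}$. Your write-up makes the last step more explicit than the paper does: you pin down the clean $p_0=2$ endpoint $\mathcal{E}(2,\ell)\lesssim\|f\|_{L^2}$ (which indeed follows from \eqref{eq:Lplqlq} with $q=r=2$ together with the $L^\infty$ multiplier bound $2^{-\ell\alpha/2}$ and Plancherel, with no $\varepsilon$-loss since the Seeger exponent $\tfrac12-\tfrac1p$ vanishes at $p=2$), and interpolate against \eqref{eq:localsmooth2} at a fixed $p_1>4$. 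One minor remark: the standard references (Bourgain's Lemma~3.3, or Lemma~2.1 of \cite{MR2434308}) give $(p-2)^{-1}$ rather than $(p-2)^{-1/2}$ for the passage from jumps to $V^p$, but as you note this is immaterial for the final bound.
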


Stein and Wainger proved that
\beq \label{eq:SWsharpCp}
\left \| \sup \{T^{(u)} f \colon u > 0\} \right \|_{L^q(\R)} \leq C_q \|f\|_{L^q(\R)}
\endeq
for $1 < q < \infty$. By complex interpolation, we then get the following Corollary:

\begin{cor} \label{cor:mainsharpCp}
There exists a constant $C$, such that for all $r \in (2,3)$, we have
\beq \label{eq:VrL2sharpCp}
\left \| V^r \{T^{(u)} f \colon u > 0\} \right \|_{L^2(\R)} \leq C (r-2)^{-1} \|f\|_{L^2(\R)}.
\endeq
\end{cor}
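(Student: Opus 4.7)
The plan is to prove Corollary~\ref{cor:mainsharpCp} via complex interpolation applied to the linear operators associated with finite partitions. For a finite increasing sequence $\vec{u}=(u_0<u_1<\dots<u_J)$ in $(0,\infty)$, let $T_{\vec{u}}f := \bigl(T^{(u_j)}f - T^{(u_{j-1})}f\bigr)_{j=1}^J$; this is a linear operator from scalar functions to $\mathbb{C}^J$-valued functions. The key observation is that
\[
V^r\{T^{(u)}f:u>0\}(x) = \sup_{\vec{u}}\bigl\|T_{\vec{u}}f(x)\bigr\|_{\ell^r},
\]
so it is enough to prove $\bigl\|\|T_{\vec{u}}f\|_{\ell^r}\bigr\|_{L^2}\le C(r-2)^{-1}\|f\|_{L^2}$ with a constant uniform in $\vec{u}$.

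I would then record two endpoint bounds for $T_{\vec{u}}$, both uniform in $\vec{u}$. From Theorem~\ref{thm:mainsharpCp} and the trivial pointwise comparison $\|T_{\vec{u}}f\|_{\ell^p}\le V^p\{T^{(u)}f\}$:
\[
\bigl\|\,\|T_{\vec{u}}f\|_{\ell^p}\,\bigr\|_{L^p(\mathbb{R})}\le C(p-2)^{-1}\|f\|_{L^p(\mathbb{R})}, \qquad p\in(2,3).
\]
From the Stein--Wainger maximal bound \eqref{eq:SWsharpCp} and the trivial comparison $\|T_{\vec{u}}f\|_{\ell^\infty}\le 2\sup_u |T^{(u)}f|$:
\[
\bigl\|\,\|T_{\vec{u}}f\|_{\ell^\infty}\,\bigr\|_{L^q(\mathbb{R})}\le 2C_q\|f\|_{L^q(\mathbb{R})}, \qquad q\in(1,\infty).
\]

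Next I would apply complex interpolation of vector-valued $L^p$ spaces. Since $[\ell^p,\ell^\infty]_\theta=\ell^{p/(1-\theta)}$ isometrically, the standard interpolation theorem for Banach-space-valued $L^p$ spaces gives a bound at $L^P(\ell^R)$ with $\tfrac{1}{P}=\tfrac{1-\theta}{p}+\tfrac{\theta}{q}$ and $\tfrac{1}{R}=\tfrac{1-\theta}{p}$. To land at $(P,R)=(2,r)$ with a good constant, I would choose
\[
p:=2+\tfrac{r-2}{4}\in\bigl(2,\tfrac{9}{4}\bigr), \qquad q:=\tfrac{3}{2}, \qquad \theta:=\tfrac{r-p}{r}=\tfrac{3(r-2)}{4r}\in\bigl(0,\tfrac{1}{4}\bigr),
\]
so $1-\theta\in(\tfrac{3}{4},1)$. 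A direct check verifies the two interpolation identities, and Riesz--Thorin yields
\[
\bigl\|\,\|T_{\vec{u}}f\|_{\ell^r}\,\bigr\|_{L^2(\mathbb{R})}\le \bigl[C(p-2)^{-1}\bigr]^{1-\theta}\bigl[2C_{3/2}\bigr]^{\theta}\|f\|_{L^2(\mathbb{R})}.
\]

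Finally I would check the constant. Substituting $p-2=(r-2)/4$, since $r-2\in(0,1)$ and $1-\theta<1$ we have $(r-2)^{-(1-\theta)}\le (r-2)^{-1}$; and since $\theta<\tfrac{1}{4}$, the factor $[2C_{3/2}]^\theta$ is uniformly bounded. Hence the constant collapses to $C'(r-2)^{-1}$, uniformly in $\vec{u}$, and taking the supremum over $\vec{u}$ yields \eqref{eq:VrL2sharpCp}. The main subtle point is the parameter selection: $p$ must be close enough to $2$ to keep $1-\theta$ bounded away from $0$ (so that the endpoint constant $(p-2)^{-1}$ lifts essentially intact to $(r-2)^{-1}$), but not so close as to force $q$ down to $1$ (which would blow up the other endpoint constant $C_q$). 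The symmetric choice $p=2+(r-2)/4$, $q=3/2$ accomplishes both at once, and bypasses any need to interpolate variation norms $V^r$ directly.
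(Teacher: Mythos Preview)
Your proof is correct and follows essentially the same approach as the paper: the paper interpolates \eqref{eq:VpLpsharpCp} at $p=(r+6)/4$ against \eqref{eq:SWsharpCp} at $q=3/2$, and your choice $p=2+(r-2)/4$ is exactly $(r+6)/4$, with the same $\theta=3(r-2)/(4r)$. Your explicit linearization via the operators $T_{\vec{u}}$ is a useful technical detail that the paper leaves implicit when it says ``by complex interpolation.''
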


Indeed, for any $r \in (2,\infty)$, one can obtain (\ref{eq:VrL2sharpCp}) by interpolating between (\ref{eq:VpLpsharpCp}) with $p = (r+6)/4$, and (\ref{eq:SWsharpCp}) with $q = 3/2$.

Below we first prove Theorem~\ref{thm:mainsharpCp}, and then use Corollary~\ref{cor:mainsharpCp} to prove Theorem~\ref{thm:KL}.

\begin{proof}[Proof of Theorem~\ref{thm:mainsharpCp}]
By the argument in Section~\ref{section:long-jumps}, we have
$$
\sup_{\lambda > 0} \left \| \lambda \sqrt{ N_{\lambda} \{T^{(2^{kd})} f \colon k \in \Z \} } \right \|_{L^p(\R)} \leq C_p \|f\|_{L^p(\R)}
$$
for $1 < p < \infty$. By the real interpolation argument in Lemma 3.3 of Bourgain \cite{MR1019960} (see also Lemma 2.1 of Jones, Seeger and Wright \cite{MR2434308}), we have
\beq \label{eq:longjumpsharpCp}
\left \| V^p \{T^{(2^{kd})} f \colon k \in \Z \} \right \|_{L^p(\R)} \leq C (p-2)^{-1} \|f\|_{L^p(\R)}
\endeq
for all $2 < p < 3$. Furthermore, by the argument in Section~\ref{section:short-jump-large}, we have 
\beq \label{eq:shortjumpsharpCp}
\left \|  \left\| V^p_j T^{(u)}f \right\|_{\ell^p(j \in \Z)} \right \|_{L^p(\R)} \leq C (p-2)^{-1} \|f\|_{L^p(\R)}
\endeq
for all $2 < p < 3$, where $V^p_j T^{(u)}f(x) := V^p \{ T^{(u)} f(x) \colon u \in [2^{jd},2^{(j+1)d}] \}$. Indeed, the left hand side above is bounded by
$$
\sum_{k, \ell \in \Z} \left\| \|V_j^{p} T^{(u)}_{\ell-j} P_{j+k} f \|_{\ell^p {\{j \colon \ell - j \geq \ell_0\}}} \right\|_{L^p(\R)}, 
$$
and the arguments of Sections \ref{sect4.2} and \ref{sect4.5} show that 
\beq \label{eq:shortjumpsharpCp_1}
\sum_{\substack{k, \ell \in \Z \\ \ell \leq -\frac{k}{2(d+1)}}} \left \| \|V_j^{p} T^{(u)}_{\ell-j} P_{j+k} f \|_{\ell^p_j} \right\|_{L^p(\R)} \leq C_p \|f\|_{L^p(\R)}
\endeq
for $1 < p < \infty$, and
\beq \label{eq:shortjumpsharpCp_2}
\sum_{\substack{k, \ell \in \Z \\ \ell > -\frac{k}{2(d+1)}, \, k > \ell(d-1) + C}} + \sum_{\substack{k, \ell \in \Z \\ \ell > -\frac{k}{2(d+1)}, \, k < \ell(d-1) - C}} \left \| \|V_j^{p} T^{(u)}_{\ell-j} P_{j+k} f \|_{\ell^p_j} \right\|_{L^p(\R)} \leq C_p \|f\|_{L^p(\R)}
\endeq
for $2 \leq p < \infty$, where the constants $C_p$ satisfy $\sup_{2 \leq p \leq 3} C_p < \infty$. Furthermore, the arguments in Section \ref{sect4.4} shows that there exist absolute constants $C$ and $\delta > 0$ such that if $\ell \geq 0$ and $k = \ell(d-1) + O(1)$, then
$$
\left \| \|V_j^{p} T^{(u)}_{\ell-j} P_{j+k} f \|_{\ell^p_j} \right\|_{L^p(\R)} \leq C 2^{-\ell \delta d (p-2)} \|f\|_{L^p(\R)}
$$
for all $2 < p < 3$.
Summing these up, we get 
\beq \label{eq:shortjumpsharpCp_3}
\sum_{\ell \geq 0} \sum_{k = \ell(d-1)+O(1)} \left \| \|V_j^{p} T^{(u)}_{\ell-j} P_{j+k} f \|_{\ell^p_j} \right\|_{L^p(\R)} \leq C (p-2)^{-1} \|f\|_{L^p(\R)}
\endeq
for all $2 < p < 3$. (\ref{eq:shortjumpsharpCp}) then follows from (\ref{eq:shortjumpsharpCp_1}), (\ref{eq:shortjumpsharpCp_2}) and (\ref{eq:shortjumpsharpCp_3}). Since 
$$
 V^p \{T^{(u)} f \colon u > 0 \}  \leq  V^p \{T^{(2^{kd})} f \colon k \in \Z \} + \left\| V^p_j T^{(u)}f \right\|_{\ell^p(j \in \Z)},
$$
we obtain the desired conclusion (\ref{eq:VpLpsharpCp}) from (\ref{eq:longjumpsharpCp}) and (\ref{eq:shortjumpsharpCp}).
\end{proof}

\begin{proof}[Proof of Theorem~\ref{thm:KL}]
We will deduce Theorem~\ref{thm:KL} from Theorem~\ref{thm:mainsharpCp}, following  Bourgain \cite{MR1019960} closely (see Lemma 4.13 there). Suppose $\tau > 0$, and  $\theta_1, \dots, \theta_M \in \R$ are such that $\min_{1 \leq i < j \leq M} |\theta_i - \theta_j| > 2\tau$. First, to prove (\ref{eq:multifreq}), it will suffice to show that
\beq \label{eq:multifreq2}
\begin{split}
& \left( \fint_{w \in [0,\frac{1}{100\tau}]} \left \| \sup_{u > 0} \left | \sum_{m=1}^M e^{i \theta_m x} T^{(u)} P_{\tau} f_m (x + w) \right| \right \|_{L^2(\R)}^2  dw \right)^{1/2}  \\
\leq & \, C (\log M)^2 \left ( \sum_{m=1}^M \|f_m\|_{L^2(\R)}^2 \right )^{1/2}
\end{split}
\endeq
Morally speaking, this is the uncertainty principle at work: note that for every $u > 0$, the function $x \mapsto T^{(u)} P_{\tau} f_m(x)$ has Fourier support contained in $[-\tau,\tau]$, and hence $|\Mod_{\theta_m} T^{(u)} P_{\tau} f_m(x)|$ can be thought of as locally constant on an interval of length $\simeq 1/\tau$. To be precise, for each $w \in [0,\frac{1}{100\tau}]$, we have, by Plancherel's identity, that
$$
\|P_{\tau} f_m(\cdot) - P_{\tau} f_m(\cdot +w)\|_{L^2} \leq \frac{1}{2} \|f_m\|_{L^2}
$$
whenever $w \in [0, \frac{1}{100\tau}]$. Thus if $B$ is the best constant for which (\ref{eq:multifreq}) holds, then for all $w \in [0,\frac{1}{100\tau}]$, we have
\[
\begin{split}
& \left \| \sup_{u > 0} \left | \sum_{m=1}^M \Mod_{\theta_m} T^{(u)} P_{\tau} f_m(x) \right | \right \|_{L^2(\R)} \\
\leq & \left \| \sup_{u > 0} \left | \sum_{m=1}^M e^{i \theta_m x} T^{(u)} P_{\tau} f_m(x+w) \right | \right \|_{L^2(\R)} + \frac{B}{2}  \left ( \sum_{m=1}^M \|f_m\|_{L^2(\R)}^2 \right )^{1/2},
\end{split}
\]
so taking $L^2$ average over all $w \in [0,\frac{1}{100\tau}]$, and using (\ref{eq:multifreq2}), we have
$$
B \leq C (\log M)^2 + B/2,
$$
i.e. $B \leq 2 C (\log M)^2$ as desired. Thus, it remains to establish (\ref{eq:multifreq2}), which can be rewritten as
\beq \label{eq:multifreq3}
\begin{split}
& \left \| \left ( \fint_{w \in [0,\frac{1}{100\tau}]} \sup_{u > 0} \left | \sum_{m=1}^M e^{-i \theta_m w} e^{i \theta_m x} T^{(u)} P_{\tau} f_m (x) \right |^2 dw \right)^{1/2} \right \|_{L^2(\R)}  \\
\leq & \, C (\log M)^2 \left ( \sum_{m=1}^M \|f_m\|_{L^2(\R)}^2 \right )^{1/2}
\end{split}
\endeq
by first changing variable $x \mapsto x - w$, and then interchanging the integrals in $x$ and~$w$.

To prove (\ref{eq:multifreq3}), for each $x \in \R$, consider the (bounded) set $A_x \subset \R^M$, given by
$$
A_x := \{(T^{(u)} P_{\tau} f_1(x), \dots, T^{(u)} P_{\tau} f_M(x)) \colon u > 0\}.
$$ 
If $\lambda$ is bigger than the diameter of $A_x$, let $E_{\lambda}(x) = 0$; otherwise let $E_{\lambda}(x)$ be the minimal number of balls in $\R^M$ of radius $\lambda$ that is required to cover $A_x$. ($E_{\lambda}(x)$ is sometimes called the entropy number.) One then observes that for every $s \in \Z$, there exists a finite subset $B_s(x) \subset A_x - A_x$, of cardinality at most $E_{2^s}(x)$, such that
$$
|b_s| \leq 2^{s+1} \quad \text{for every $b_s \in B_s(x)$},
$$
and such that every element $a$ of $A_x$ admits a decomposition
$$
a = \sum_{s \in \Z} b_s \quad \text{with $b_s \in B_s(x)$ for every $s \in \Z$}.
$$
Then the left hand side of (\ref{eq:multifreq3}) is bounded by
\[
\left \| \sum_{s \in \Z} \max_{b_s \in B_s(x)} \left ( \fint_{w \in [0,\frac{1}{100\tau}]} \left | \sum_{m=1}^M e^{-i\theta_m w} e^{i\theta_m x} b_{s,m} \right |^2 dw \right)^{1/2} \right \|_{L^2(\R)};
\]
here $b_s = (b_{s,1}, \dots, b_{s,m})$. Using Cauchy--Schwarz for the sum over $m$, the above display equation is further bounded by
\beq \label{eq:L2min}
\left \| \sum_{s \in \Z} \min \left \{ 2^{s+1} M^{1/2}, \left ( \sum_{b_s \in B_s(x)} \fint_{w \in [0,\frac{1}{100\tau}]} \left | \sum_{m=1}^M e^{-i\theta_m w} e^{i\theta_m x} b_{s,m} \right |^2 dw \right)^{1/2} \right \} \right \|_{L^2(\R)}.
\endeq
To estimate the integral in $w$ above, observe that from the separation of the $\theta_1, \dots, \theta_M$, we have
$$
\left( \fint_{w \in [0,\frac{1}{100\tau}]} \left| \sum_{m=1}^M e^{-i\theta_m w} c_m \right|^2 dw \right)^{1/2} \lesssim \left( \sum_{m=1}^M |c_m|^2 \right)^{1/2};
$$
indeed, the key is that if $\mathfrak{h} \colon \R^M \to \R^M$ is defined by
$$
(\mathfrak{h}a)_m := \sum_{m=1}^M \frac{\tau}{\theta_m - \theta_n} a_n,
$$
then the operator norm of $\mathfrak{h}$ is bounded independent of $M$, which can be deduced, for instance, by comparing it to the (continuous) Hilbert transform on $\R$. Thus (\ref{eq:L2min}) is bounded by
\beq \label{eq:BourgainLemma3.33}
\left \| \sum_{s \in \Z} \min \left \{ 2^{s+1} M^{1/2}, 2^{s+1} E_{2^s}(x)^{1/2} \right \} \right \|_{L^2(\R)}
= 2 \left \| \sum_{s \in \Z} 2^s \min \left \{ M^{1/2}, E_{2^s}(x)^{1/2} \right \} \right \|_{L^2(\R)}.
\endeq
Now let $$F(x) := \left( \sum_{m=1}^M \sup_{u > 0} |T^{(u)} P_{\tau} f_m(x)|^2 \right)^{1/2}.$$ Then the diameter of $A_x$ is at most $2 F(x)$. Hence the entropy number $E_{2^s}(x) = 0$ whenever $2^s > 2 F(x)$. We are then led to sum
$$
\sum_{2^s \leq 2F(x)} 2^s \min \{ M^{1/2}, E_{2^s}(x)^{1/2} \}.
$$
We split this sum into two, one where $2^s \leq M^{-1/2} F(x)$, and another where $M^{-1/2} F(x) \leq 2^s \leq 2 F(x)$. The former sum is bounded by $F(x)$, while the latter sum is bounded by $(\log M) M^{\frac{1}{2}-\frac{1}{r}} \sup_{s \in \Z} 2^s E_{2^s}(x)^{\frac{1}{r}}$ for any $r \in (2,\infty)$. Now pick $r \in (2,\infty)$ such that $$\frac{1}{2} - \frac{1}{r} = (\log M)^{-1},$$ so that $M^{\frac{1}{2}-\frac{1}{r}} \simeq 1$. Then (\ref{eq:BourgainLemma3.33}) is bounded by
$$
\left \| F(x)  + (\log M) \sup_{s \in \Z} 2^s E_{2^s}(x)^{1/r} \right \|_{L^2(\R)}.
$$
But by Stein and Wainger's inequality (\ref{eq:SWsharpCp}), we have
$$
\|F \|_{L^2(\R)} \lesssim  \left ( \sum_{m=1}^M \|f_m\|_{L^2(\R)}^2 \right )^{1/2}.
$$
Furthermore, one can relate the entropy $E_{2^s}(x)^{1/r}$, with the $r$-th variation norm pointwise:
$$
\sup_{s \in \Z} 2^s E_{2^s}(x)^{1/r} \leq \left( \sum_{m=1}^M |V^r \{T^{(u)} P_{\tau} f_m(x) \colon u > 0\} |^2 \right)^{1/2}.
$$
Hence
$$
\left \|(\log M) \sup_{s \in \Z} 2^s E_{2^s}(x)^{1/r} \right \|_{L^2(\R)}
\leq (\log M) \left( \sum_{m=1}^M \|V^r \{T^{(u)} P_{\tau} f_m \colon u > 0\} \|_{L^2(\R)}^2 \right)^{1/2}.
$$
By Corollary~\ref{cor:mainsharpCp}, the latter is bounded by
$$
C (\log M) (r-2)^{-1} \left ( \sum_{m=1}^M \|f_m\|_{L^2(\R)}^2 \right )^{1/2},
$$
and since $(r-2)^{-1} \simeq \log M$ by our choice of $r$, this completes the proof of Theorem~\ref{thm:KL}.
\end{proof}

\newcommand{\etalchar}[1]{$^{#1}$}

\end{document}